\documentclass[12pt,a4paper]{amsart}

\usepackage{graphicx}
\usepackage{xcolor}
\usepackage{amsmath,amsfonts,amssymb,amsthm}
\usepackage{enumerate}
\usepackage{mathrsfs}
\usepackage{amscd}
\usepackage{euscript}
\usepackage{arydshln}
\usepackage[all]{xy}
\usepackage[
  a4paper,
  top=25mm,
  bottom=20mm,
  left=25mm,
  right=25mm
]{geometry}

\usepackage{hyperref}
\usepackage{cleveref}
\usepackage{xr-hyper}
\usepackage{doi}

\theoremstyle{definition}
\newtheorem{Def}{Definition}[section]

\newtheorem{Rm}{Remark}[section]

\theoremstyle{plain}
\newtheorem{Prop}[Def]{Proposition}
\newtheorem{Lem}[Def]{Lemma}
\newtheorem{Thm}[Def]{Theorem}

\numberwithin{equation}{section}

\newcommand{\authorfootnotesA}{\renewcommand\thefootnote{$\flat$}}
\newcommand{\authorfootnotesB}{\renewcommand\thefootnote{$\sharp$}}

\definecolor{light-gray}{gray}{0.5}
\begin{document}

\begin{center}
	\LARGE
    Factorization of Time-Ordered Exponentials for Wiener Space Transformations \par \bigskip

	\normalsize
	\authorfootnotesA
	Jir\^o Akahori\textsuperscript{1},
	\authorfootnotesB
    Takafumi Amaba\textsuperscript{2}\footnote{%
	This work was supported by JSPS KAKENHI Grant Number 22K03345.%
	},
    Tasuku Kubo\textsuperscript{1}

    \textsuperscript{1}Ritsumeikan University, 1-1-1 Nojihigashi, Kusatsu, Shiga, 525-8577, Japan

	\textsuperscript{2}Fukuoka University,
	8-19-1 Nanakuma, J\^onan-ku, Fukuoka, 814-0180, Japan.\par \bigskip

	\today
\end{center}

\begin{quote}{\small {\bf Abstract.}
We develop an operator-algebraic framework for change-of-variables formulas on Wiener space, interpreting them as arising from hidden symmetries acting on observables. We show that general transformations can be represented by time-ordered exponentials generated by annihilation and creation operators, and that these admit an explicit factorization into a determinant, a multiplication operator, and a translation operator. Taking expectations recovers the classical formulas, including the Ramer--Kusuoka formula.
}
\end{quote}

\renewcommand{\thefootnote}{\fnsymbol{footnote}}

\footnote[0]{email:\href {mailto:akahori@se.ritsumei.ac.jp}{akahori@se.ritsumei.ac.jp}(J.Akahori),\,%
\href {mailto:fmamaba@fukuoka-u.ac.jp}{fmamaba@fukuoka-u.ac.jp}(T.Amaba),
\href {mailto:ra0074vh@ed.ritsumei.ac.jp}{ra0074vh@ed.ritsumei.ac.jp}(T.Kubo)}

\footnote[0]{ 
2020 \textit{Mathematics Subject Classification}.
Primary
60H07; 
Secondary
17B65, 
81R10 
}

\footnote[0]{ 
\textit{Key words and phrases}.
Cameron--Martin--Maruyama--Girsanov formula;
Ramer--Kusuoka formula;
Heisenberg algebra;
CCR;
time-ordered exponential
}

\section{Introduction}

Classical change-of-variables formulas on Wiener space---such as the Cameron--Martin, Maruyama--Girsanov, and Ramer--Kusuoka formulas---are typically formulated and understood within a measure-theoretic framework. Starting from a more elementary formula, these expressions are obtained by extending the following change-of-variables formula on $\mathbb{R}^{d}$ to the path space:
\begin{equation}
\label{eq:general_CoV}
\begin{split}
&
\int_{\mathbb{R}^{d}} f(x) \,\mu_{d} (\mathrm{d}x) \\
&=
\int_{\mathbb{R}^{d}}
f(x-z(x))
\cdot
\det \left( I_{d} - \frac{\partial z}{\partial x}(x) \right)
\exp \left( \langle z(x), x \rangle_{\mathbb{R}^{d}} - \frac{ \Vert z(x) \Vert_{\mathbb{R}^{d}}^{2} }{2}  \right)
\,\mu_{d} (\mathrm{d}x).
\end{split}
\end{equation}
Here, $\mu_{d}(\mathrm{d}x) = (2\pi)^{-d/2} \exp\!\left(-\|x\|_{\mathbb{R}^{d}}^{2}/2\right)$ denotes the Gaussian measure, and $z = (z_{1}, z_{2}, \ldots, z_{d}) : \mathbb{R}^{d} \to \mathbb{R}^{d}$ is a suitable smooth transformation. The matrix $\frac{\partial z}{\partial x} = \left( \frac{\partial z_{i}}{\partial x_{j}} \right)_{1 \leq i,j \leq d}$ denotes the Jacobian matrix of $z$. Strictly speaking, the determinant in the above formula should be taken in absolute value. However, since we are interested only in the algebraic structure underlying this formula, we assume that $z$ is sufficiently small so that $\det \left( I_{d} - \frac{\partial z}{\partial x}(x) \right) > 0$.
On the path space $W = \{ w \in C([0,1] \to \mathbb{R}): w(0)=0 \}$, there exists, at least formally, a probability measure with the following heuristic expression, called the \emph{Wiener measure}:
\begin{equation*}
\mu (\mathrm{d}w)
\propto
\exp \left( - \frac{1}{2} \int_{0}^{1} (\, \text{`}\dot{w}(t) \text{'} \,)^{2} \,\mathrm{d}t \right)
\, \text{`} \mathrm{d}w \text{'}.
\end{equation*}
The connection with $\mathbb{R}^{d}$ is given by the pushforward via the map $W \ni w \mapsto x = (\Delta w_{1}, \Delta w_{2}, \ldots, \Delta w_{d}) \in \mathbb{R}^{d}$, where $[0,1]$ is divided into $d$ subintervals and $\Delta w_{i}$ denotes the increment over each subinterval, together with an appropriate scalling. While these formulas provide explicit Radon--Nikod{\'y}m densities describing how Wiener measure transforms under suitable shifts, their structural origin has remained largely analytic. We interpret this structure as arising from hidden symmetries acting on observables.

In this paper, we propose a fundamentally different perspective: we interpret these change-of-variables formulas as arising from \emph{hidden symmetries acting on observables}, and we develop an operator-algebraic framework in which these symmetries become explicit. Our main result (Theorem~\ref{Thm:main}) shows that the transformation associated with a general (possibly anticipating) shift can be realized as a \emph{time-ordered exponential} generated by a system of annihilation and creation type operators, and that this operator admits a striking factorization into three components:
$
\text{(time-ordered exponential)}
= \text{(determinant)} \times \text{(multiplication operator)} \times \text{(translation operator)}.
$
This factorization may be regarded as a noncommutative analogue of the classical Jacobian formula, and provides a unified structural explanation for the appearance of determinant and exponential terms in Wiener space transformations.

To extract the algebraic structure underlying the formula, we begin by recalling the classical change-of-variables formulas from which it originates.

\subsection{The Cameron--Martin formula}

It is instructive to begin with the case where $z(x) \equiv h \in \mathbb{R}^{d}$. In this situation, the determinant appearing in the equation (\ref{eq:general_CoV}) is equal to $1$, and the Radon--Nikod\'{y}m density takes the form
$
\exp \left(
- \langle h, x \rangle_{\mathbb{R}^{d}}
- \frac{\|h\|_{\mathbb{R}^{d}}^{2}}{2}
\right)
$.
As the corresponding expression lifted to Wiener space via the projective limit, one obtains the following formula, known as the \emph{Cameron--Martin formula}:
\begin{equation}
\label{eq:CM_formula}
\int_{W} f(w) \,\mu (\mathrm{d}w)
=
\int_{W}
f(w-h)
\cdot
\exp \left( \int_{0}^{1} \dot{h}(s) \,\mathrm{d}w(s)
-
\frac{1}{2}
\int_{0}^{1} \dot{h}(s)^{2} \,\mathrm{d}s
\right)
\,\mu (\mathrm{d}w).
\end{equation}
Here, $h$ is an absolutely continuous function whose derivative is square-integrable, and $\dot{h}$ denotes its derivative. The subspace $H$ consisting of all such absolutely continuous functions with square-integrable derivatives is called the \emph{Cameron--Martin subspace}.

From the viewpoint of functional analysis, $W$ is a separable Banach space under the uniform norm, while $H$ becomes a Hilbert space equipped with the $L_{2}$-inner product of derivatives. In this setting, taking the dual of the dense embedding $H \hookrightarrow W$, one obtains $W^{*} \hookrightarrow H^{*} \cong H$, whose image is dense in $H$. Consequently, every element $h \in H$ can be approximated by elements of $W^{*}$, and by regarding this approximation as a convergent sequence in $L_{2}(W,\mu)$ (which contains $W^{*}$), one obtains an embedding, called the \emph{stochastic extension}, $H \ni h \mapsto \widetilde{h} \in L_{2}(W,\mu)$.

The expression $\int_{0}^{1} \dot{h}(s)\,\mathrm{d}w(s)$ is one representation of $\widetilde{h}(w)$, based on the structure of $W$ as a path space, and is called the \emph{Wiener integral}. This construction can in fact be carried out in a general separable Banach space equipped with a Gaussian measure, namely, in an \emph{abstract Wiener space} developed by L.~Gross (\cite{Gro67}).

The formula (\ref{eq:CM_formula}) is due to Cameron--Martin (\cite{Cam}), who, at a time when the importance of integration with respect to Wiener measure was becoming increasingly apparent from various perspectives, clarified precisely how such integrals transform under translations of the integration variable. That is, they identified the exact form of the weight (Radon--Nikod\'{y}m density) with which the transformed integral can be expressed again as an integral with respect to the original Wiener measure, as in the formula above. It is worth noting that they referred to integration with respect to Wiener measure as the Wiener integral. This result forms the foundation of the vast field later known as \emph{Malliavin calculus}.

\subsection{The Maruyama--Girsanov formula}

Next, consider the case where $\frac{\partial z^{i}}{\partial x^{j}} = 0$ for all $j < i$, that is, when each $z^{i}(x)$ depends only on $x_{1}, x_{2}, \ldots, x_{i-1}$.
Following the earlier viewpoint of interpreting a point in $\mathbb{R}^{d}$ as a `list of increments of a path,' this condition means that the next increment $z^{i}(x)$ does not depend on future components, but only on the past history $x_{1}, x_{2}, \ldots, x_{i-1}$. It is therefore natural to recognize this as an assumption suited to the treatment of filtrations in stochastic analysis.

An analogous condition exists on Wiener space, where it is referred to as predictability. Owing to the specific structure of Wiener space, this is essentially equivalent to adaptedness (more precisely, progressive measurability).
In this case, replacing $z(x)$ by $Z(w) = \left( \int_{0}^{t} z_{s}(w)\,\mathrm{d}s \right)_{0 \leq t \leq 1}$, the corresponding change-of-variables formula on Wiener space can be written as follows, and is known as the (\emph{Cameron--Martin--})\emph{Maruyama--Girsanov formula}:
\begin{equation}
\label{eq:MG_formula}
\begin{split}
\int_{W} f(w) \,\mu (\mathrm{d}w)
=
\int_{W}
f(w-Z(w))
\cdot
\exp \left( \int_{0}^{1} z_{t}(w) \,\mathrm{d}w(t)
-
\frac{1}{2}
\int_{0}^{1} ( z_{t}(w) )^{2} \,\mathrm{d}t
\right)
\,\mu (\mathrm{d}w).
\end{split}
\end{equation}

During the 1940s and 1950s, against the background of the theory of Markov processes developed by A.~N.~Kolmogorov and W.~Feller and the development of stochastic integration by K.~It\^{o}, the relationship between stochastic processes described by stochastic differential equations and Markov processes came to be understood in a systematic way.
In this context, G.~Maruyama (\cite{Mar55}) constructed diffusion processes as solutions to stochastic differential equations obtained by adding a drift term to the Wiener process. Moreover, this construction can be realized via difference approximations (now the so-called \emph{Euler--Maruyama scheme}), and their convergence and distributional properties were studied.
Subsequently, I.~V.~Girsanov (\cite{Gir60}) showed that such drifted processes can be described as absolutely continuous changes of measure with respect to the Wiener measure, thereby reinterpreting these structures within the framework of measure transformations.

This result (\ref{eq:MG_formula}) yields a wide range of properties related to Brownian motion, including methods for solving stochastic differential equations and estimates of the densities of their solutions.

\subsection{The Ramer--Kusuoka formula}

When $z$ is not predictable (i.e., anticipating), the Radon--Nikod\'{y}m density on Wiener space is given by the following expression under suitable conditions.
\begin{equation}
\label{eq:RK_formula}
\begin{split}
&
\int_{W} f(w) \,\mu (\mathrm{d}w) \\
&=
\int_{W}
f(w-Z(w))
\cdot
\mathrm{det}_{2} ( I - D Z (w) )
\exp \left( \int_{0}^{1} z_{t}(w) \,\delta w(t)
-
\frac{1}{2}
\int_{0}^{1} ( z_{t}(w) )^{2} \,\mathrm{d}t
\right)
\,\mu (\mathrm{d}w)
\end{split}
\end{equation}
(Compare with (\ref{eq:general_CoV}).)
Here, $Z(w) = \left( \int_{0}^{t} z_{s}(w)\,\mathrm{d}s \right)_{0 \leq t \leq 1}$, and $DZ$ denotes the Malliavin derivative, which corresponds to the Jacobian of the map $Z : W \ni w \mapsto Z(w) \in H$
($\subset W$).
The symbol $\det_{2}$ denotes the Carleman--Fredholm determinant, an infinite-dimensional analogue of the Jacobian, and $\int_{0}^{1} z_{t}(w)\,\delta w(t)$ is the Skorokhod integral, which extends the It\^{o} integral above to anticipating integrands.
Transformations of this type were already considered by Cameron--Martin (\cite{Cam2, Cam3}); however, an explicit formula of the above form was first derived Ramer (\cite{Ram}) and later refined by S.~Kusuoka (\cite{Kus}).
As is pointed in \cite{ZZ}, if $DZ$ is {\em quasi-nilpotent}, meaning that $\mathrm{Tr} (DZ)^{n} = 0$ for all $n$, the density (\ref{eq:RK_formula}) reduces to the one for Girsanov--Maruyama.
Subsequently, Bukdahn~\cite{Buc2} studied anticipating Girsanov transformations in which the adaptedness requirement is removed, and attempts have been made to express the Carleman--Fredholm determinant appearing in the Ramer--Kusuoka formula in the form
$
\exp \int (\text{trace-type quantity around time $t$}) \,\mathrm{d}t
$.

\hrulefill

Whether on $\mathbb{R}^{d}$ or on $W$, the left-hand side of (\ref{eq:general_CoV}), (\ref{eq:CM_formula}), (\ref{eq:MG_formula}) or (\ref{eq:RK_formula}) represents the expectation (integral) of $f$ itself, whereas the right-hand side yields the same quantity as the left-hand side by first modifying $f$ in some manner and then taking its expectation.

From this perspective, one may regard the left-hand side as expressing that some hidden symmetry acts on the function $f$ in such a way that, after taking expectation, the integral of $f$ itself reappears. On the other hand, the right-hand side can be interpreted as giving an explicit description of how this symmetry acts on $f$.

Such a reinterpretation may be viewed as a natural attempt to uncover an underlying algebraic structure in probabilistic objects. It is then natural to expect that this symmetry is realized as an element of some large group, that is, something that can be expressed in the form sort of $\exp(...)$.

\emph{The main objective of this paper is to describe the operator corresponding to the Ramer--Kusuoka type change of variables as a time-ordered exponential constructed from abstract annihilation and creation type operators, and to show that it admits a factorization into the product of two multiplication operators, involving determinant factors, and a translation operator.}

Our approach developed below shows that there exists a unified algebraic symmetry underlying these transformations, which is not visible at the level of expectations but becomes manifest only at the level of operators.

\subsection{A generalization of CCR}
In view of the fact that this action becomes invisible after taking expectation on the left-hand side of (\ref{eq:general_CoV}), (\ref{eq:CM_formula}), (\ref{eq:MG_formula}) or (\ref{eq:RK_formula}), it is natural to consider that this \emph{hidden symmetry} should be expressed in the form $1 + (\text{adjoint of an annihilator})$, or more typically as something of the form $\exp(\text{adjoint of an annihilator})$. Here, `adjoint' refers to the adjoint operator with respect to the $L^{2}(\mu)$-inner product associated with the measure $\mu$.

This action is expected to be determined by some quantity related to $z$. To identify its infinitesimal generator, we consider the above change-of-variables formula by regarding $z$ as a perturbation of $x$ (or $w$), and taking (functional) derivatives. Then, upon fixing a basis $(h_{i})_{i}$ of the underlying space, one is naturally led to the derivative $D_{i}$ in the $i$-th coordinate direction and its $L^{2}(\mu)$-adjoint $D_{i}^{*}$.

More precisely, in the change-of-variables formula above, replacing $f$ by a product $f \cdot g$, and setting $z(x) \equiv \varepsilon \cdot h_{i}$, we apply $\left. \frac{\mathrm{d}}{\mathrm{d}\varepsilon} \right|_{\varepsilon = 0}$ to both sides. Using the identity $D_{i}^{*} f(x) = - D_{i} f(x) + x_{i} f(x)$, we obtain $\langle D_{i} f, g \rangle_{L^{2}(\mu)}
=
\langle f, D_{i}^{*} g \rangle_{L^{2}(\mu)}$.

This relation holds not only when $\mu$ is a Gaussian measure, but more generally when $\mu$ is described by a smooth probability density function $p(x) = \frac{\mu(\mathrm{d}x)}{\mathrm{d}x}$, by interpreting $D_{i}^{*} f = - D_{i} f + (-D_{i} \log p)\, f$. From this definition, one has $D_{i} + D_{i}^{*} = -D_{i} \log p$, which is, in particular, a multiplication operator. The operators $D_{i}^{*}$ obtained in this way satisfy the following commutation relations with the operators $D_{i}$:
\begin{equation*}
( D_{i} D_{j}^{*} - D_{j}^{*} D_{i} )f(x)
=
(- D_{i}D_{j} \log p(x) ) \cdot f(x)
\end{equation*}
In other words, $[ D_{i}, D_{j}^{*} ] = D_{i} D_{j}^{*} - D_{j}^{*} D_{i} = - D_{i} D_{j} \log p$ is a multiplication operator. In particular, when $\mu$ is a Gaussian measure, this reduces to the well-known \emph{canonical commutation relations} (CCR):
$[ D_{i}, D_{j}^{*} ] = \delta_{i,j}$. Moreover, it is immediate to verify that both $[ D_{i}, f ] = \frac{\partial f}{\partial h_{i}}
\quad \text{and} \quad
[ D_{i}^{*}, f ] = - \frac{\partial f}{\partial h_{i}}$ are also multiplication operators.

To abstract the probabilistic structure described above, we now restart from the following abstract framework. Let $k=\mathbb{R}$ and $R$ be a commutative unital algebra over $k$, which we identify with the collection of multiplication operators by elements of $R$.

Let $\mathrm{End}_{k}(R)$ be the algebra of $k$-linear operators on $R$, and denote the commutator in $\mathrm{End}(R)$ by $[A,B] = AB - BA$. We denote by $\mathrm{Diff}(R)$ the ring of differential operators on $R$.
The action of $\mathrm{End}_{k}(R)$ on $R$ is denoted by $R \ni f \mapsto A.f = A(f) \in R$ for $A \in \mathrm{End}_{k}(R)$.

Suppose that we are given operators $D_{i}$, $D_{i}^{*} \in \mathrm{End}_{k}(R)$, for $i \in \mathbb{N}$, satisfying the following conditions:
\begin{itemize}
\item[1.]
$[D_{i}, D_{j}] = [D_{i}^{*}, D_{j}^{*}] = 0$,
\item[2.]
$D_{i} + D_{i}^{*} \in R$,
\item[3.]
$[D_{i}, R]$, $[D_{i}^{*}, R] \subset R$, equivalently, $[[D_{i}, R], R] = [[D_{i}^{*}, R], R] = 0$, that is, $D_{i}$ and $D_{i}^{*}$ are differential operators of order at most one.
\end{itemize}
From the above properties, we have $[ D_{i}, D_{j}^{*} ] = [ D_{i}, D_{j} + D_{j}^{*} ]
\subset [ D_{i}, R ]
\subset R$, and hence $[ D_{i}, D_{j}^{*} ]$ is a multiplication operator. In this sense, the present framework can be regarded as a generalization of CCR.
Also, note that, since $[\, D_{i} + D_{i}^{*}, f \,] = 0$ for every $f \in R$, it follows in particular that $[\, D_{i}^{*}, f \,] = -[\, D_{i}, f \,]$.

We denote by $\mathcal{D}(R)$ the subalgebra of $\mathrm{Diff}(R)$ generated by $\{ D_{i}, D_{i}^{*} \}_{i}$ together with $R$. Here, we think of $D_{i}$ as annihilators, and have in mind situations in which $D_{i}^{*}$ can be realized as adjoint operators in $L_{2}$ with respect to some measure.

Let $\mathcal{I} = ( D_{1}^{*}, D_{2}^{*}, D_{3}^{*}, \ldots ) \, \mathcal{D}(R)$ be the right ideal of $\mathcal{D}(R)$ consisting of elements that admit a representation in which the operators $D_{i}^{*}$ appear on the far left. Then elements of $\mathcal{I}$ are to be regarded as `ajoints' of annihilators.

\subsection{Time-ordered exponential as symmetry}

Fix a family $Z = \{ Z_{i} \}_{i \in \mathbb{N}} \subset R$, and assume that $Z_{i} = 0$ for all but finitely many indices $i$. Then the $\mathbb{N} \times \mathbb{N}$ matrices $\Phi_{Z} = \Phi$ and $\Psi_{Z} = \Psi$, defined below, have the property that all their entries vanish except for finitely many columns.
\begin{equation*}
\Phi
= (\Phi_{i,j})_{i,j \in \mathbb{N}}
= ( D_{i}^{*} Z_{j} )_{i,j \in \mathbb{N}},
\quad
\Psi
= (\Psi_{i,j})_{i,j \in \mathbb{N}}
= ( [ D_{i}, Z_{j} ] )_{i,j \in \mathbb{N}}
\end{equation*}

In general, consider the collection of matrices whose entries vanish except for finitely many columns:
$\bigcup_{n=1}^{\infty}
\mathrm{Hom}_{k} \left( \mathcal{D}(R)^{n}, \mathcal{D}(R)^{\mathbb{N}} \right)
$,
where the union is taken under the natural embeddings
$
\mathrm{Hom}_{k} \left( \mathcal{D}(R)^{n}, \mathcal{D}(R)^{\mathbb{N}} \right)
\hookrightarrow
\mathrm{Hom}_{k} \left( \mathcal{D}(R)^{\mathbb{N}}, \mathcal{D}(R)^{\mathbb{N}} \right)
$.
Note that this collection is closed under the usual matrix multiplication. Moreover, for such matrices, the trace $\mathrm{Tr}$ is well defined as a finite sum, and the determinant can also be defined without difficulty by the usual formula.

As mentioned above, we are interested in symmetries that can be expressed in the form $\exp(\text{adjoint of an annihilator})$. In the present setting, however, the algebra under consideration is noncommutative, and therefore several notions of an exponential function may arise. Among these, we adopt the time-ordered version.

To this end, we adjoin an indeterminate $t$, commuting with $\mathcal{D}(R)$, and consider the formal power series algebra $\mathcal{D}(R)[\![ t ]\!]$. On this space, we study the solution of the following differential equation.
\begin{equation}
\label{eq:dfe}
\frac{\mathrm{d}}{\mathrm{d} t} S(t)
=
\left( \sum_{n=0}^{\infty} [\,\mathrm{Tr} ( \Phi_{Z} (- \Psi_{Z} )^{n} ) \,] \,t^{n} \right)
S(t),
\quad S(0) = 1
\end{equation}
The solution $S(t)$ is typically expressed as
\begin{equation*}
\mathcal{T}\!\exp \left(
\int_{0}^{t}
\sum_{n=0}^{\infty}
\left[
\mathrm{Tr}\!\left( \Phi_{Z} \, (-\Psi_{Z})^{n} \right)
\right]
(t^{\prime})^{n}
\mathrm{d}t^{\prime}
\right)
=
\mathcal{T}\!\exp \left(
\sum_{n=0}^{\infty}
\frac{t^{\,n+1}}{n+1}
\left[
\mathrm{Tr}\!\left( \Phi_{Z} \, (-\Psi_{Z})^{n} \right)
\right]
\right) ,
\end{equation*}
and is formulated as an element of
$
\exp\big( \mathcal{I}[\![ t ]\!] \big) \subset \mathcal{D}(R)[\![ t ]\!]
$
(\cite{GKLLRT95}).
Such solutions of differential equations are referred to as \emph{time-ordered exponentials}, and arise naturally in contexts involving hidden symmetries induced by gauge transformations, such as diffusion semigroups viewed as observables with harmonic functions as states, and Wilson lines.

In F.~J.~Dyson~\cite[Equation~(29)]{Dys49}, the product is introduced as a path-dependent ordered product $P$, corresponding to a path passing through specified points in a prescribed order. This notation is standard among experts in gauge theory, particularly in the computation of Wilson loops. In our context, however, we are not concerned with highly variable paths in operator space, but rather focus essentially on a single type of path. Therefore, instead of emphasizing path dependence, we adopt the notation $\mathcal{T}$, which highlights dependence on time ordering.

For reference, in R.~P.~Feynman~\cite{Fey51}, no special notation is introduced for time-ordered products; as a result, the time-ordered exponential is simply written as $\exp$. G.-C.~Wick (\cite{Wic50}) introduced his own time-ordering operator $T$, but it differs from the modern standard definition of time ordering in the treatment of sign factors.

A direct computation of such time-ordered exponentials is scarcely known outside the commutative case; in most situations, they admit only a formal representation as a series of multiple integrals. We aim to compute them more explicitly and to describe them in simpler terms. In doing so, we show that this corresponds to evaluating the change-of-variables formula at the level of observables, which, upon taking expectation (integration), reduces to the well-known change-of-variables formula. To describe the result of this computation of the time-ordered exponential, we now introduce the appropriate terminology.

\subsection{Translation operator, determinant and normal order product}
\label{Sec:det_order}

Fix a family $Z = \{ Z_{i} \}_{i \in \mathbb{N}} \subset R$ such that $Z_{i} = 0$ for all but finitely many indices $i$. We then define operators $D_{Z}, D_{Z}^{*} \in \mathcal{D}(R)$ by the following formulas.
\begin{equation*}
D_{Z} = \sum_{i} Z_{i} D_{i},
\quad
D_{Z}^{*} = \sum_{i} Z_{i} D_{i}^{*}
\end{equation*}

We define the translation operator $T_{Z}(t) \in \mathcal{D}(R)[\![ t ]\!]$ by the following formula.
\begin{equation*}
T_{Z} (t)
=
\sum_{n=0}^{\infty} \frac{t^{n}}{n!}
\sum_{i_{1}, i_{2}, \ldots , i_{n}}
Z_{i_{1}} Z_{i_{2}} \cdots Z_{i_{n}}
D_{i_{1}} D_{i_{2}} \cdots D_{i_{n}}
\end{equation*}

To gain some intuition for this operator, consider, for example, its action on a polynomial function $f(x)$ on $\mathbb{R}$. In this case, we take a representation given by $Z_{1} = h \in \mathbb{R}$, $Z_{2} = Z_{3} = \cdots = 0$, $D_{1} = \frac{\mathrm{d}}{\mathrm{d}x}$, $D_{2} = D_{3} = \cdots = 0$, and $t \in \mathbb{R}$. Then we have
\begin{equation*}
f(x + t \cdot h)
=
\sum_{n=0}^{\infty} \frac{ (t h )^{n}}{n!}
\frac{\mathrm{d}^{n}f(x)}{\mathrm{d}x^{n}}
=
\sum_{n=0}^{\infty} \frac{t^{n}}{n!}
\,
\left( h \frac{\mathrm{d}}{\mathrm{d}x} \right)^{n}
f(x)
\Bigg(
=
\big( \mathrm{e}^{t\, h D_{1}} f \big)(x)
\Bigg)
=
[ T_{Z}(t) f ](x).
\end{equation*}
This observation provides a useful way to develop an intuitive understanding of the operator $T_{Z}(t)$.

From this point on, we introduce a unary operation $:\bullet:$ on $\mathcal{D}(R)$, defined by the following rule, which is analogous to the notion known as the normal-ordered  (\cite{MJD}).

(i) For $a_{n} \in \mathcal{D}(R)$, $n \in \mathbb{N}$, we define $:\!\sum a_{n}\!:\; =\; \sum :a_{n}:$.
(ii) For monomials $a$ and $b$, we set $: a D_{i} b : \;=\; :ab:\, D_{i}$ and $: a Z_{i} b : \;=\; Z_{i} \, :ab:$. Hence, for example, the following relations hold.
\begin{equation*}
: \left( \sum_{i} Z_{i} D_{i} \right)^{2} \left( \sum_{i} Z_{i} D_{i}^{*} Z_{i} \right)^{2} :
\; = \;
\sum_{i,j,k,l}
Z_{i} Z_{j} Z_{k} Z_{l}
D_{k}^{*} D_{l}^{*} D_{i} D_{j}
\end{equation*}
This unary operation extends in an obvious manner to a unary operation on $\mathcal{D}(R)[\![t]\!]$ so as to preserve the formal parameter $t$. Under this notation, the translation operator introduced above can be expressed as $T_{Z}(t) \;=\; : \mathrm{e}^{\, t \, D_{Z} } :$. Moreover, although $D_{i} + D_{i}^{*}$ is a multiplication operator, it is not necessarily expressible as a linear combination of the $Z_{j}$'s. Hence, although $:D_{i} + D_{i}^{*}:\;=\; D_{i} + D_{i}^{*}$ holds, in general one cannot expect that $:( D_{i} + D_{i}^{*} )^{n}: \;=\; ( D_{i} + D_{i}^{*} )^{n}$ holds.

For an element $A = (A_{i,j})_{i,j \in \mathbb{N}}$ of the collection of matrices whose entries vanish except for finitely many columns, $\bigcup_{n=1}^{\infty}
\mathrm{Hom}_{k}\left( \mathcal{D}(R)^{n}, \mathcal{D}(R)^{\mathbb{N}} \right)$, and for a multi-index $I = (i_{1}, i_{2}, \ldots, i_{n}) \in \mathbb{N}^{n}$, we define the minor $\det(A \mid I)$ of $A$ by the following formula.
\begin{equation*}
\det ( A \mid I )
=
\sum_{\sigma \in \mathfrak{S}_{n}}
\mathrm{sgn} ( \sigma )
A_{i_{1}, i_{\sigma (1)}}
A_{i_{2}, i_{\sigma (2)}}
\cdots
A_{i_{n}, i_{\sigma (n)}}
\end{equation*}
In this setting, we define $\det(1 + tA) \in \mathcal{D}(R)[\![t]\!]$ by the following formula.
\begin{equation}
\label{eq:det}
\det ( 1 + t A )
=
\sum_{n=0}^{\infty}
\frac{ t^{n} }{ n! }
\sum_{ I \in \mathbb{N}^{n} }
\det ( A \mid I )
\end{equation}
Here, in the case where $I \in \mathbb{N}^{0}$, that is, $I = ()$ (empty tuple), we conventionally set $\det (\, A \mid () \,) = 1$.
If all the entries of $A$ commute, then the usual algebraic properties of the determinant are preserved.

Finally, we define $\mathcal{E}_{Z}(t) \in \mathcal{D}(R)[\![t]\!]$ by the following formula.
\begin{equation*}
\mathcal{E}_{Z}(t)
\, = \,
:
\exp \left(\,
t ( D_{Z} + D_{Z}^{*} )
\,\right)
:
\end{equation*}
In fact, this is defined as an element of $R[\![t]\!]$, that is, a formal power series whose coefficients are multiplication operators.
Actually, we have 
\begin{equation*}
\begin{split}
: (D_{i_1}+D^*_{i_1}) \cdots (D_{i_n}+D^*_{i_n}) :
\;&=\;
D^*_{i_{1}}
:(D_{i_2}+D^*_{i_2}) \cdots (D_{i_n}+D^*_{i_n}): \\
&\hspace{9mm}+ \;
: (D_{i_2}+D^*_{i_2}) \cdots (D_{i_n}+D^*_{i_n}) :D_{i_{1}}.
\end{split}
\end{equation*}
Since we have 
$
D_{i}^{*} R + R D_{i}
=
[ D_{i}^{*},  R ] + R ( D_{i}^{*}+D_{i}) \subset R
$,
we get
$
: (D_{i_1}+D^*_{i_1}) \cdots (D_{i_n}+D^*_{i_n}) : \in R
$
inductively.
Thus $\mathcal{E}_{Z}(t)$ is a multiplication operator.

We are now ready to state our main result. It shows that transformations which have hitherto been treated only in measure-theoretic terms can instead be handled within the framework of operator calculus. Our main theorem is the following identity in $\mathcal{D}(R)[\![t]\!]$.

\medskip

\begin{Thm} 
\label{Thm:main}
We have
$\displaystyle
\mathcal{T}\!\exp \left( \sum_{n=0}^{\infty} [\, \mathrm{Tr} ( \Phi_{Z} ( -\Psi_{Z} )^{n} ) \,] \, \frac{ t^{n+1} }{ n+1 } \right)
=
\det ( 1 + t \Psi_{Z} )
\mathcal{E}_{Z}(t)
T_{-Z}(t)
$.
\end{Thm}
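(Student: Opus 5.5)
The plan is to use uniqueness of solutions of \eqref{eq:dfe} in $\mathcal{D}(R)[\![t]\!]$. Comparing coefficients of $t^{n}$, the equation $S'(t)=A(t)S(t)$ with $S(0)=1$ determines $S$ recursively. So it suffices to set
\[
F(t)=\det(1+t\Psi_Z)\,\mathcal{E}_Z(t)\,T_{-Z}(t)
\]
and check two things: that $F(0)=1$, which is immediate, and that $F'(t)=A(t)F(t)$, where $A(t)=\sum_{n}\mathrm{Tr}(\Phi_Z(-\Psi_Z)^n)t^n$.

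\textbf{Rewriting the generator.} The entries of $\Psi_Z$ lie in $R$ and therefore commute. Hence
\[
A(t)=\sum_{i,j} D_i^{*}\, Z_j\, \big((1+t\Psi_Z)^{-1}\big)_{ji}=\sum_i D_i^{*}\, W_i(t),
\]
where the inverse is taken in the formal power series sense and $W_i(t)=\sum_j Z_j((1+t\Psi)^{-1})_{ji}\in R[\![t]\!]$. Next I split $D_i^{*}=(D_i+D_i^{*})-D_i$; the first summand is a multiplication operator. This expresses $A(t)F(t)$ as a multiplication part plus a part $-\sum_i D_iW_i F$. In the latter I push $D_i$ to the right through each factor of $F$:
\begin{itemize}
\item through $W_i$ and $\det(1+t\Psi)$, which only produces commutators in $R$;
\item through $\mathcal{E}_Z$, which again gives a commutator in $R$;
\item through $T_{-Z}(t)$, which needs the key identity below.
\end{itemize}

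\textbf{Commuting $D_i$ past the translation.} Differentiating the normally ordered series term by term, using $[D_i,Z_j]=\Psi_{ij}$, gives a chain-rule identity
\[
D_i\,T_{-Z}(t)=T_{-Z}(t)D_i - t\sum_j \Psi_{ij}\,T_{-Z}(t)D_j ,
\]
that is, $\sum_j(1+t\Psi)_{ij}\,T_{-Z}D_j = D_iT_{-Z}$ up to the ordering conventions. This is the operator form of $\partial_i(f\circ(x-tZ))=\sum_j(\delta_{ij}-t\partial_iZ_j)(\partial_jf)\circ(x-tZ)$. The role of $W(t)=Z(1+t\Psi)^{-1}$ is exactly to invert this matrix. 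After the substitution, $-\sum_i W_iD_iT_{-Z}$ should collapse to $-\sum_j Z_j T_{-Z}D_j$, which equals $T_{-Z}'(t)$, since $T_{-Z}'=-:D_Z T_{-Z}:=-\sum_j Z_jT_{-Z}D_j$.

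\textbf{Matching the remaining multiplication terms.} It remains to match the scalar factors:
\begin{itemize}
\item $\frac{d}{dt}\mathcal{E}_Z$, computed from the recursion for $:(D+D^*)\cdots:$ displayed before the theorem; it should produce $\sum_i W_i(D_i+D_i^*)$-type terms together with $t$-corrections;
\item $\frac{d}{dt}\det(1+t\Psi)=\det(1+t\Psi)\,\mathrm{Tr}(\Psi(1+t\Psi)^{-1})$, by Jacobi's formula, valid because the entries of $\Psi$ commute so the definition \eqref{eq:det} agrees with the usual determinant;
\item the commutator terms $-\sum_i[D_i,W_i\det(1+t\Psi)\mathcal{E}_Z]$.
\end{itemize}
The main obstacle is this last bookkeeping. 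The terms $[D_i,W_i]$ contain the trace $\sum_{i,j}\Psi_{ij}((1+t\Psi)^{-1})_{ji}$, which should cancel Jacobi's term. They also contain second-order pieces $[D_i,\Psi_{jk}]$, which must cancel against $-\sum_i W_i[D_i,\det(1+t\Psi)]/\det(1+t\Psi)$. This is a Liouville-type divergence identity, and it uses the symmetry $[D_i,\Psi_{jk}]=[D_j,\Psi_{ik}]$, which comes from $[D_i,D_j]=0$. Similarly, $[D_i,\mathcal{E}_Z]$ must combine with the $t$-derivative of $\mathcal{E}_Z$.

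To keep this manageable, I would first verify the whole identity order by order in $t$ up to $t^{2}$. Only then would I organize the general computation through the generating-function form $W=Z(1+t\Psi)^{-1}$.
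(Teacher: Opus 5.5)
Your route differs from the paper's and can be completed. As written, though, the sign convention breaks your key step, and the part you call bookkeeping is only asserted (``should'', ``must''), not proved.

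\textbf{The sign.} You take $\Psi_{ij}=[D_i,Z_j]$. Your own chain rule then gives $D_iT_{-Z}(t)=\sum_j(1-t\Psi)_{ij}T_{-Z}(t)D_j$, not $(1+t\Psi)$. So $W=Z(1+t\Psi)^{-1}$ does not invert this matrix, and $-\sum_iW_iD_iT_{-Z}$ does not collapse to $T_{-Z}'$. Likewise $-\sum_i[D_i,W_i]$ contributes $-\mathrm{Tr}(\Psi(1+t\Psi)^{-1})$, the negative of Jacobi's term.

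This is not a defect of your method. With $\Psi_{ij}=[D_i,Z_j]$ the statement is false in general already at order $t$. The $t$-coefficient of the left side is $\sum_iD_i^*Z_i=\sum_i(Z_iD_i^*+[D_i^*,Z_i])$, while that of the right side is $\sum_i(Z_iD_i^*+\Psi_{ii})$. The convention under which the theorem holds is $\Psi_{ij}=[D_i^*,Z_j]=-[D_i,Z_j]$. The paper's proof actually uses it: see the case $n=1$ of Lemma~\ref{Lem:an_decomp}, Lemma~\ref{Lem:adin}, and the Wiener-space section, where $\Psi_Z=([D_i^*,Z_j])$. With $M=1+t\Psi$ in this convention, $D_iT_{-Z}=\sum_jM_{ij}T_{-Z}D_j$, the matrix $W=ZM^{-1}$ inverts $M$ exactly, and the trace terms match.

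\textbf{Closing the bookkeeping.} Put $m_i=D_i+D_i^*\in R$ and $\delta_i=[D_i,\,\cdot\,]$; these are commuting derivations of $R$ with $\delta_jZ_k=-\Psi_{jk}$. Let $J=\det M$. Your expansion reduces $F'=AF$ to the identity
$J'\mathcal{E}_Z+J\mathcal{E}_Z'=\sum_i\bigl(m_iW_iJ\mathcal{E}_Z-\delta_i(W_iJ\mathcal{E}_Z)\bigr)$ in $R[\![t]\!]$. This splits into
\[
\text{(a)}\quad \mathcal{E}_Z'=\sum_iW_i\,(m_i-\delta_i)\mathcal{E}_Z,
\qquad
\text{(b)}\quad J'=-\sum_i\delta_i(W_iJ).
\]

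For (a), write $\mathcal{E}_Z=\sum_n\frac{t^n}{n!}\sum_{I}Z_{i_1}\cdots Z_{i_n}P_I$ with $P_I={:}\prod_k(D_{i_k}+D_{i_k}^*){:}\in R$. Then $P_I$ is symmetric in $I$, and the recursion stated before the theorem says $P_{jI}=(m_j-\delta_j)P_I$. Set $E_j=\sum_n\frac{t^n}{n!}\sum_IZ_{i_1}\cdots Z_{i_n}P_{jI}$, so that $\mathcal{E}_Z'=\sum_jZ_jE_j$. Applying $\delta_j$ to $\mathcal{E}_Z$, using Leibniz on the $Z$'s and the symmetry of $P$, gives $\sum_kM_{jk}E_k=(m_j-\delta_j)\mathcal{E}_Z$. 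Inverting $M$ yields (a).

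For (b), Jacobi's formula gives $J'=\mathrm{Tr}(\Psi\,\mathrm{adj}M)$. On the other side,
$\sum_i\delta_i(W_iJ)=\sum_{i,j}\delta_i(Z_j)\,\mathrm{adj}(M)_{ji}+\sum_jZ_j\sum_i\delta_i(\mathrm{adj}(M)_{ji})$.
The first sum equals $-\mathrm{Tr}(\Psi\,\mathrm{adj}M)$. The second vanishes by Piola's identity, which holds because $\delta_iM_{kl}=\delta_kM_{il}$; this is your symmetry coming from $[D_i,D_j]=0$.

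No order-by-order check is needed. All sums are finite, since only indices in the support of $Z$ contribute.

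\textbf{Comparison with the paper.} The paper never separates $\mathcal{E}_Z$ from $T_{-Z}$: Lemma~\ref{Lem:QCMTV} fuses them into ${:}\mathrm{e}^{tD_Z^*}{:}$. The rest is coefficient-level combinatorics of bordered determinants (Lemmas~\ref{Lem:an_decomp}, \ref{Lem:cn_decomp}, \ref{Lem:adin}), showing $a'=c\,e$ with $c=b\det(1+t\Psi_Z)$. That route needs no inverse of $1+t\Psi_Z$ and no Jacobi formula. The symmetry $[D_i^*,\Psi_{jk}]=[D_j^*,\Psi_{ik}]$ enters through the antisymmetrization in Lemma~\ref{Lem:adin}, which plays the role Piola plays for you.

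Your argument is the infinitesimal, continuity-equation form of the classical change of variables. Here $W$ acts as the velocity field, (b) is Liouville's theorem, and (a) is the transport law for the density factor. It is shorter and accounts for each factor separately. The price is working with the formal resolvent and proving (a) for $\mathcal{E}_Z$ on its own, which the paper bypasses.
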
 

In other words, the time-ordered exponential appearing on the left-hand side can be computed as the application of the translation operator followed by two multiplication operators.

The above result may be regarded as a noncommutative version of the following identity, which holds for finite-dimensional matrices $A$ and $B$ over a scalar field, with $B$ invertible.
\begin{equation*}
\exp \left(
\sum_{n=0}^{\infty}
\frac{t^{n+1}}{n+1}
\mathrm{Tr} ( A B^{n} )
\right)
=
\det \left(
\exp \left(
- AB^{-1} \log ( I - t B )
\right)
\right)
\end{equation*}
In particular, in the case $A=B$, the following well-known identity holds even without assuming invertibility:
\begin{equation*}
\exp \left(
\sum_{n=0}^{\infty}
\frac{t^{n+1}}{n+1}
\mathrm{Tr} ( A^{n+1} )
\right)
=
\det ( I - t A )^{-1}.
\end{equation*}

In our framework, the classical change-of-variables formula described above is realized by letting the operator appearing in Theorem 1.1 act on the `vacuum' $1$, and then taking the expectation (i.e., integration). Therefore, apart from the determinant factor $\det ( 1 + t \Psi_{Z}$, the multiplicative operator $\mathcal{E}_{Z}(t)$ should also be regarded as the part that describes the Radon--Nikod\'{y}m density of this transformation, in particular its positivity.

For $f(t) \in R[\![t]\!]$, the exponential $\exp ( f(t) )$ is defined by the usual formal Taylor series. When realized, for example, as a function in $C([0,1] \to \mathbb{R})$, it becomes a positive function. (On the other hand, $\det ( 1 + t \Psi_{Z} )$ should also be positive for sufficiently small $t$.)

Thus, the following theorem can be a basis to prove the absolute continuity of our `translation.'

\medskip

\begin{Thm} 
\label{Thm:MB}
Suppose that $D_{i} 1 = 0$. Then,
$\displaystyle
\mathcal{E}_{Z}(t).1
= 
\exp \left(
\int_{0}^{t}
:\mathrm{e}^{ -t^{\prime} D_{Z} } D_{Z}^{*}: (1)
\, \mathrm{d}t^{\prime}
\right)
$.
\end{Thm}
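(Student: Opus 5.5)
The plan is to set $F(t) = \mathcal{E}_{Z}(t).1 \in R[\![t]\!]$ and show that it solves the scalar formal ODE $F'(t) = g(t) F(t)$ with $F(0) = 1$, where $g(t) = \,:\mathrm{e}^{-tD_{Z}} D_{Z}^{*}:(1)$. Since $R[\![t]\!]$ is commutative, this ODE has the unique solution $\exp\left(\int_{0}^{t} g(t')\,\mathrm{d}t'\right)$, which is the claim. Throughout I read $:\mathrm{e}^{-tD_{Z}} D_{Z}^{*}:(1)$ as follows: all coefficients $Z_{j}$ are pulled to the left, and the remaining word in $D_{j}$'s and one $D^{*}_{i_0}$ is applied to $1$ with $D^{*}_{i_0}$ acting first. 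That is,
\begin{equation*}
g(t) = \sum_{m\ge 0} \frac{(-t)^{m}}{m!} \sum_{i_0, j_1, \ldots, j_m} Z_{i_0} Z_{j_1}\cdots Z_{j_m}\, D_{j_1}\cdots D_{j_m} D^{*}_{i_0} 1 .
\end{equation*}
I would first check that this is the reading under which the statement is non-trivial, since literally putting the $D_{j}$'s to the right and using $D_{i}1=0$ would kill every term with $m \ge 1$.

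\textbf{Step 1 (simplify $F$).} Expand $:(D_{Z}+D_{Z}^{*})^{n}:$ binomially. By the normal-ordering rule, each monomial has the form $Z\cdots Z\, D^{*}\cdots D^{*}\, D\cdots D$. When applied to $1$, every monomial containing at least one $D_{i}$ vanishes because $D_{i}1 = 0$. Hence
\begin{equation*}
F(t) = \sum_{n} \frac{t^{n}}{n!} \sum_{I\in\mathbb{N}^{n}} Z_{I}\, D^{*}_{I} 1,
\end{equation*}
with the multi-index shorthand $Z_{I} = Z_{i_1}\cdots Z_{i_n}$ and $D^{*}_{I} = D^{*}_{i_1}\cdots D^{*}_{i_n}$.

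\textbf{Step 2 (differentiate and peel off one factor).} Differentiating in $t$ gives
\begin{equation*}
F'(t) = \sum_{n} \frac{t^{n}}{n!} \sum_{i_0, I} Z_{i_0} Z_{I}\, D^{*}_{I} D^{*}_{i_0} 1,
\end{equation*}
where I have used $[D_{i}^{*}, D_{j}^{*}] = 0$ to move $D^{*}_{i_0}$ to the right end. Now set $f_{i_0} = D^{*}_{i_0} 1 \in R$ and write $D^{*}_{i_0}1 = f_{i_0}\cdot 1$. The key tool is a Leibniz rule for $D^{*}_{I}$ acting on a product $f\cdot u$. Since $[D_{i}^{*}, f] = -[D_{i}, f] \in R$ for $f\in R$, and $\delta_{i} := [D_{i}, \cdot\,]$ is a derivation of $R$ (with the $\delta_{i}$ mutually commuting because the $D_{i}$ commute), induction on $n$ gives
\begin{equation*}
D^{*}_{I}(f u) = \sum_{J \sqcup K = I} \bigl((-\delta)_{J} f\bigr)\, D^{*}_{K} u .
\end{equation*}

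\textbf{Step 3 (resum).} Substitute the Leibniz rule into $F'(t)$, split $Z_{I} = Z_{J} Z_{K}$ accordingly, and count the $\binom{n}{m}$ ways to choose $J$ with $|J| = m$. The double series then factors as
\begin{equation*}
\Bigl(\sum_{m} \frac{t^{m}}{m!} \sum Z_{i_0} Z_{J} (-\delta)_{J} f_{i_0}\Bigr)\Bigl(\sum_{k} \frac{t^{k}}{k!} \sum Z_{K} D^{*}_{K} 1\Bigr) = g(t) F(t).
\end{equation*}
Here I use $D_{j} h = [D_{j}, h] 1 + h D_{j} 1 = \delta_{j} h$ for $h\in R$, which follows from $D_{j}1 = 0$. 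This identifies $(-\delta)_{J} f_{i_0}$ with $(-1)^{m} D_{J} D^{*}_{i_0} 1$, so the first factor is exactly $g(t)$. Uniqueness of formal power series solutions then finishes the proof.

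The main obstacle is Step 2: proving the Leibniz/binomial expansion carefully, including the commutation of the derivations $\delta_{i}$ and the combinatorial bookkeeping that turns $1/n!$ into $1/(m!\,k!)$. Pinning down the intended meaning of the normal-ordered expression applied to $1$ is a secondary issue that should be settled first.
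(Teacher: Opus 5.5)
Your proof is correct. Your reading of $:\mathrm{e}^{-tD_{Z}}D_{Z}^{*}:(1)$ (all $Z$'s pulled to the left, $D^{*}_{i_0}$ applied to $1$ first, then the $D_{j}$'s) is the one the paper actually uses, both in the $n=1$ step of its proof and in the Ramer--Kusuoka computation of Section~3.

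Your route, however, differs from the paper's. The paper never writes down an ODE for $\mathcal{E}_{Z}(t).1$. It works instead with the candidate $e_{t}=\exp\bigl(\int_{0}^{t}g(t')\,\mathrm{d}t'\bigr)$ and proves by induction on $n$ the higher-order identity $e_{t}^{(n)} = e_{t}\cdot\sum_{i_1,\ldots,i_n}(Z_{i_1}\cdots Z_{i_n}\,{:}\mathrm{e}^{-tD_{Z}}{:}).(P_{n}^{i_1,\ldots,i_n})$ in $R[\![t]\!]$, where $P_{n}^{i_1,\ldots,i_n}={:}(D_{i_1}+D^{*}_{i_1})\cdots(D_{i_n}+D^{*}_{i_n}){:}(1)$. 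Its inductive step differentiates the translation operator. It then merges $T_{-Z}(t).P_{n}$ with $T_{-Z}(t).(D^{*}_{i}.1)$ via the multiplicativity of Lemma~\ref{Lem:LEI}, and uses $-D_{i}.P_{n}+(D^{*}_{i}.1)P_{n}=D^{*}_{i}P_{n}=P_{n+1}$. Setting $t=0$ then matches all Taylor coefficients.

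You instead check the first-order equation $F'=gF$ directly and appeal to uniqueness in the commutative ring $R[\![t]\!]$. Your single combinatorial input is the expansion $D^{*}_{I}(fu)=\sum_{J\sqcup K=I}((-\delta)_{J}f)\,D^{*}_{K}u$. In generating-function form this is the intertwining relation ${:}\mathrm{e}^{tD_{Z}^{*}}{:}(fu)=(T_{-Z}(t).f)\cdot{:}\mathrm{e}^{tD_{Z}^{*}}{:}(u)$, and it does the job that Lemma~\ref{Lem:LEI} does in the paper.

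Your argument is shorter and needs neither Lemma~\ref{Lem:LEI} nor the induction over higher derivatives. The paper's argument gives a stronger closed formula for every derivative of $e_{t}$. It also reuses Lemma~\ref{Lem:LEI}, which the paper needs anyway for the translation property in Proposition~\ref{Prop:pr-trans}.
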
  

In light of this result, we may regard it as an abstract characterization of the absolute continuity of `translations.'

\subsection{The organization of the paper}

We first provide a proof of the main theorem in the next section. We then realize our algebra $\mathcal{D}(R)$ on the space generated by Hermite polynomials over the classical Wiener space, and explain how the above identity yields the Ramer--Kusuoka formula upon taking expectation. Furthermore, we show that this classical formula admits extensions to curved Wiener spaces and to McKean--Vlasov equations.

\subsection{Historical Remarks}
The study of absolute continuity of measures under transformations has a long history in stochastic analysis. 
The foundational work was laid by \cite{Cam, Cam2, Cam3} by Cameron and Martin, who characterized the density for linear translations in the Wiener space. 
This was later extended to more general settings by Gross \cite{Gro}, Segal \cite{Seg}, and Kuo \cite{Kuo}.

A significant breakthrough for non-linear and non-anticipative transformations was achieved by Ramer \cite{Ram} and Kusuoka \cite{Kus}, leading to the Ramer-Kusuoka formula. 
Their work introduced the use of the Carleman-Fredholm determinant to handle the ``Jacobian" term arising from the non-linear shift. 
Subsequently, the formula was further developed and refined in various directions. 
Bukdahn \cite{Buc2} explored anticipating Girsanov transformations without the requirement of adaptedness, while the case of infinite dimensional spaces was studied by authors including J. Zabczyk.

In this paper, we revisit these classical results from a purely algebraic perspective. 
By utilizing the action of a generalized Heisenberg algebra and normal order products, we provide a unified framework that recovers the classical formulas on Wiener spaces.

\section{Proofs of Theorems~\ref{Thm:main} and \ref{Thm:MB}}
\subsection{Proof of Theorem~\ref{Thm:main}}

Let $s_n$, $n =0,1,2,\ldots$ are non-commutative version of the \emph{complete homogeneous symmetric functions}
defined inductively by
$s_0 \equiv 1$ and 
\begin{equation*}
s_{n+1} (x_0, x_1,\cdots,x_{n}) = \frac{1}{n+1} 
\sum_{k=0}^n x_k s_{n-k}. 
\end{equation*} 
Here $x_{i}$'s are (not necessarily commutative) indeterminates.

By the following lemma, the solution 
of the differential equation (\ref{eq:dfe}) can be expressed as a power series in $t$,
however, this is nothing but a reformulation, in the present context, of the content of \cite[Definition~3.1, Proposition~3.3]{GKLLRT95}.
(Note that in \cite{GKLLRT95}, a version is considered in which the order of multiplication of $b(t)$ and $a(t)$ on the right-hand side of the differential equation below is reversed. By reinterpreting the formula with the order of multiplication reversed, the same expression is obtained.)

\medskip

\begin{Lem} 
\label{Lem:UNQ}
For
$b(t) = \sum_{n=0}^{\infty} t^n b_n \in \mathcal{D}(R)[\![t]\!]$,
the differential equation in $\mathcal{D}(R)[\![t]\!]$,
\begin{equation}
\label{eq:NCDE}
a^{\prime}(t) = b(t) a (t)
\quad
\text{with}
\quad
a(0) = 1,
\end{equation}
have the unique solution 
\begin{equation*}
a(t) = \sum_{n=0}^\infty t^n s_n (b_0,\cdots, b_{n-1}).
\end{equation*}
\end{Lem}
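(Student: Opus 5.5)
We prove the lemma by comparing coefficients of $t$ in $\mathcal{D}(R)[\![t]\!]$. Since $t$ commutes with $\mathcal{D}(R)$, for $a(t)=\sum_{n\ge 0} a_n t^n$ the formal derivative is $a'(t)=\sum_{n\ge0}(n+1)a_{n+1}t^n$. The Cauchy product gives
\begin{equation*}
b(t)a(t)=\sum_{n\ge 0} t^n \sum_{k=0}^{n} b_k a_{n-k}.
\end{equation*}
Here the order of the factors $b_k a_{n-k}$ is kept throughout, because the coefficients need not commute.

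Comparing the coefficient of $t^n$ on both sides, together with $a(0)=a_0=1$, the equation (\ref{eq:NCDE}) is equivalent to the recursion
\begin{equation*}
a_0=1,\qquad a_{n+1}=\frac{1}{n+1}\sum_{k=0}^{n} b_k\, a_{n-k},\quad n\ge 0 .
\end{equation*}
This recursion determines every $a_{n+1}$ from $a_0,\dots,a_n$. Hence existence and uniqueness of the solution follow by induction on $n$. Division by $n+1$ is legitimate because $k=\mathbb{R}$ has characteristic $0$.

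It remains to identify $a_n$ with $s_n(b_0,\dots,b_{n-1})$. We use induction on $n$, with base case $a_0=1=s_0$. The defining recursion of $s_{n+1}$ reads
\begin{equation*}
s_{n+1}(x_0,\dots,x_n)=\frac{1}{n+1}\sum_{k=0}^{n} x_k\, s_{n-k}.
\end{equation*}
In this formula $s_{n-k}$ is understood as $s_{n-k}(x_0,\dots,x_{n-k-1})$, that is, evaluated on the first $n-k$ variables. With this reading, substituting $x_k=b_k$ gives exactly the recursion for $a_{n+1}$, with $b_k$ multiplied on the left. This left position matches the form $b(t)a(t)$ in (\ref{eq:NCDE}).

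The only delicate point is this bookkeeping: the implicit arguments of $s_{n-k}$, and checking that left multiplication matches the ordering $b\,a$ rather than $a\,b$. The latter is the reversal mentioned with respect to \cite{GKLLRT95}. Once this is settled, the proof is a routine coefficient comparison.
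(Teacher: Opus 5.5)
Your proposal is correct and follows the paper's proof essentially verbatim. You compare coefficients of $t$ to get the recursion $a_{n+1}=\frac{1}{n+1}\sum_{k=0}^{n} b_k a_{n-k}$ with $a_0=1$, which forces uniqueness and matches the defining recursion of $s_{n+1}$, where $s_{n-k}$ is read as $s_{n-k}(b_0,\dots,b_{n-k-1})$; your remarks on characteristic zero and on these implicit arguments only make explicit what the paper leaves tacit.
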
 
\begin{proof} 
We set $a(t) = \sum_{n=0}^\infty t^n a_n $. 
Then, by (\ref{eq:NCDE}), we have
\begin{equation*}
\sum_{n=0}^\infty t^n (n+1) a_{n+1}
= a^{\prime}(t)
= b(t) a(t)
= \left( \sum_{n=0}^\infty t^n b_n \right) \left( \sum_{n=0}^\infty t^n a_n \right)
=
\sum_{n=0}^\infty t^n \sum_{k=0}^n b_k a_{n-k},
\end{equation*}
so that we get
$
a_{n+1} =\frac{1}{n+1} \sum_{k=0}^n b_k a_{n-k}
$.
Since $a_0 = 1$, from the definition of $s_n$ we obtain $a_n=s_n (b_0,\cdots, b_{n-1})$ for all $n \geq 0$. 
\end{proof} 

Now, by (\ref{eq:dfe}), the expression
$\displaystyle
S(t)
=
\mathcal{T}\!\exp \left(
\sum_{n=0}^{\infty}
\frac{t^{\,n+1}}{n+1}
\left[\,
\mathrm{Tr}\!\left( \Phi_{Z} \, (-\Psi_{Z})^{n} \right)
\,\right]
\right)
$
satisfies the differential equation (\ref{eq:NCDE}) with
$
b(t)
=
\sum_{n=0}^{\infty}
t^{n} \, [\, \mathrm{Tr}( \Phi_{Z} (-\Psi_{Z})^{n} ) \,]
$,
and hence, by Lemma~\ref{Lem:UNQ}, we obtain the following expression.
\begin{equation*}
\begin{split}
&
\mathcal{T}\!\exp \left(
\sum_{n=0}^{\infty}
\frac{t^{\,n+1}}{n+1}
\left[\,
\mathrm{Tr}\!\left( \Phi_{Z} \, (-\Psi_{Z})^{n} \right)
\,\right]
\right) \\
&=
1
+
\sum_{n=1}^{\infty}
t^{n} s_{n} (\,
\mathrm{Tr}( \Phi_{Z} ), 
\mathrm{Tr}( \Phi_{Z} (-\Psi_{Z}) ),
\ldots ,
\mathrm{Tr}( \Phi_{Z} (-\Psi_{Z})^{n-1} )
\,)
\end{split}
\end{equation*}

It remains to show that this right-hand side coincides with the right-hand side of the identity in Theorem~\ref{Thm:main}, that is,
$
\det ( 1 + t \Psi_{Z} )
\cdot \mathcal{E}_{Z} (t)
\cdot T_{Z} (t)
$.
To this end, we begin by computing
$
\mathcal{E}_{Z} (t)
\cdot T_{-Z} (t)
$.

\medskip

\begin{Lem} 
\label{Lem:QCMTV}
We have
$
\mathcal{E}_Z (t) \cdot T_{-Z}(t)
\;=\;
: \mathrm{e}^{ t D_{Z}^{*}} :
$
in $\mathcal{D}(R)[\![t]\!]$.
\end{Lem}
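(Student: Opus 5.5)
The plan is to work entirely at the level of normal-ordered monomials. The key point is that both factors on the left-hand side, once expanded, are already sums of normal-ordered words. Their actual operator product can then be computed without any commutation, after which a binomial cancellation in $t$ finishes the argument.

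\emph{Step 1: normal-ordered expansion of $\mathcal{E}_Z(t)$.} I will write $D_Z + D_Z^{*} = \sum_i Z_i (D_i + D_i^{*})$. By rule (i) and rule (ii) of the unary operation $:\bullet:$, every word in $:(D_Z+D_Z^{*})^n:$ has the form $Z_{i_1}\cdots Z_{i_n}$ followed by the chosen $D^{*}$'s and then the chosen $D$'s. Since $[D_i^{*},D_j^{*}]=[D_i,D_j]=0$, the internal order of the $D^{*}$'s and of the $D$'s is irrelevant. Sorting the choices by the number $k$ of starred factors therefore gives
\begin{equation*}
:(D_Z+D_Z^{*})^n: \;=\; \sum_{k=0}^{n}\binom{n}{k}\sum_{i_1,\ldots,i_n} Z_{i_1}\cdots Z_{i_n}\, D_{i_1}^{*}\cdots D_{i_k}^{*}\, D_{i_{k+1}}\cdots D_{i_n},
\end{equation*}
which I abbreviate as $\sum_k \binom{n}{k} N_{k,n-k}$. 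This is an honest identity of operators, namely the definition of $:\bullet:$ written out. Separately, the paper has already shown that this operator lies in $R$.

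\emph{Step 2: the product with $T_{-Z}(t)$.} The coefficient of $t^m$ in $T_{-Z}(t)$ is $\frac{(-1)^m}{m!}\sum Z_{j_1}\cdots Z_{j_m} D_{j_1}\cdots D_{j_m}$. Denote by $E_n\in R$ the multiplication operator $:(D_Z+D_Z^{*})^n:$. Because $E_n$ and the $Z_j$ are multiplication operators, they commute, so
\begin{equation*}
E_n\, Z_{j_1}\cdots Z_{j_m} D_{j_1}\cdots D_{j_m} = Z_{j_1}\cdots Z_{j_m}\, E_n\, D_{j_1}\cdots D_{j_m}.
\end{equation*}
Now I substitute the expansion from Step 1 for $E_n$. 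Appending $D$'s on the right of a normal-ordered word keeps it normal-ordered, and multiplying on the left by $Z$'s does too. Hence
\begin{equation*}
E_n\cdot Z_{j_1}\cdots Z_{j_m} D_{j_1}\cdots D_{j_m}=\sum_k\binom{n}{k} N_{k,n-k+m}
\end{equation*}
(summed over the indices $j$), with no commutators produced. This is the one delicate point. Substituting the normal-ordered expansion of $E_n$ is legitimate only because $E_n$ sits \emph{to the left of} the $D_j$'s. Had the product been taken in the order $T_{-Z}\,\mathcal{E}_Z$, the $D_j$'s would act on $E_n$ and on the $Z$'s, and genuine commutator terms would appear.

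\emph{Step 3: collecting coefficients.} Combining Steps 1 and 2, the coefficient of $t^N$ in $\mathcal{E}_Z(t)\,T_{-Z}(t)$ is
\begin{equation*}
\sum_{n+m=N}\frac{1}{n!}\frac{(-1)^m}{m!}\sum_{k}\binom{n}{k}N_{k,n-k+m}.
\end{equation*}
I will regroup by $k$ and $l=n-k+m$, so that $N=k+l$. The coefficient of $N_{k,l}$ is then
\begin{equation*}
\frac{1}{k!}\sum_{j+m=l}\frac{(-1)^m}{j!\,m!}=\frac{1}{k!}\cdot\frac{(1-1)^l}{l!},
\end{equation*}
which vanishes unless $l=0$. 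The only surviving terms are therefore $\frac{t^k}{k!}N_{k,0}$, whose sum is exactly $:\mathrm{e}^{tD_Z^{*}}:$.

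The main obstacle is the bookkeeping in Step 2. There I must make sure that "expand $E_n$ as a sum of normal-ordered words, then multiply" is a genuine operator identity and not merely a formal symbol manipulation. Everything hinges on the $Z$-factors of $T_{-Z}$ standing to the left and commuting with $E_n\in R$. I would write this out carefully for a single monomial before summing.
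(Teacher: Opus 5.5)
Your proof is correct and is essentially the paper's argument. The paper also uses the fact that $:(D_{i_1}+D_{i_1}^{*})\cdots(D_{i_k}+D_{i_k}^{*}):$ is a multiplication operator commuting with the $Z$'s of $T_{-Z}(t)$, so that each product becomes a single normal-ordered word. It then concludes with the binomial identity $\sum_{k}\binom{n}{k}:(D_Z+D_Z^{*})^{k}(-D_Z)^{n-k}: \;=\; :(D_Z^{*})^{n}:$, and your explicit $(1-1)^{l}$ cancellation over the words $N_{k,l}$ is exactly that binomial step unpacked. Unpacking it has the small merit of showing that only the formal linearity of $:\bullet:$ on words is used.
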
 
\begin{proof} 
The coefficient of $\frac{t^n}{n!}$ in
$
\mathcal{E}_{Z} (t) \cdot T_{-Z}(t)
\;=\;
:\mathrm{e}^{t(D_{Z}+D_{Z}^{*})}:
\;
:\mathrm{e}^{ -tD_{Z} }:
$,
say $c_{n}$,
is given by
\begin{equation*}
\begin{split}
c_{n}
&=
\sum_{k=0}^{n}
\binom{n}{k}
\sum_{i_1, \cdots, i_n}
Z_{i_1} \cdots Z_{i_k}
: (D_{i_1}+D^*_{i_1}) \cdots (D_{i_k}+D^*_{i_k}) : \\
&\hspace{60mm}
\cdot
Z_{i_{k+1}} \cdots Z_{i_n} (-D_{i_{k+1}}) \cdots (-D_{i_n}).
\end{split}
\end{equation*}
For each summand, by the definition of normal order product, the following identity holds.
\begin{equation*}
\begin{split}
&
Z_{i_1} \cdots Z_{i_k}
{\color{light-gray}
\underbrace{
    {\color{black}
    : (D_{i_1}+D^*_{i_1}) \cdots (D_{i_k}+D^*_{i_k}) :
    }
}_{
    \begin{array}{c}
    \text{{\scriptsize This is a multiplication operator}}
    \end{array}
}
}
Z_{i_{k+1}} \cdots Z_{i_n} (-D_{i_{k+1}}) \cdots (-D_{i_n}) \\
&= 
Z_{i_1} \cdots Z_{i_k} Z_{i_{k+1}} \cdots Z_{i_n}
: (D_{i_1}+D^*_{i_1}) \cdots (D_{i_k}+D^*_{i_k}) :
(-D_{i_{k+1}}) \cdots (-D_{i_n}) \\
&= 
:
Z_{i_1} \cdots Z_{i_k} Z_{i_{k+1}} \cdots Z_{i_n}
(D_{i_1}+D^*_{i_1}) \cdots (D_{i_k}+D^*_{i_k})
(-D_{i_{k+1}}) \cdots (-D_{i_n})
:
\end{split}
\end{equation*}
Therefore we have the following.
\begin{equation*}
\begin{split}
c_{n}
&= 
\sum_{k=0}^{n}
\binom{n}{k}
:
\sum_{i_1, \cdots, i_k}
Z_{i_1} \cdots Z_{i_k}
(D_{i_1}+D^*_{i_1}) \cdots (D_{i_k}+D^*_{i_k}) \\
&\hspace{50mm}
\sum_{i_{k+1}, \cdots, i_{n}}
Z_{i_{k+1}} \cdots Z_{i_n}
(-D_{i_{k+1}}) \cdots (-D_{i_n})
: \\
&= 
\sum_{k=0}^{n}
\binom{n}{k}
:
\left( \sum_{i}
Z_{i}
(D_{i}+D^*_{i}) \right)^{k}
\left(
\sum_{i} (-Z_{i}) D_{i}
\right)^{n-k}
: \\
&= 
\sum_{k=0}^{n}
\binom{n}{k}
:
\left( D_{Z} + D_{Z}^{*} \right)^{k}
\left( -D_{Z} \right)^{n-k}
:
\;=\;
: \left( ( D_{Z} + D_{Z}^{*} ) - D_{Z} \right)^{n} :
\;=\;
: ( D_{Z}^{*} )^{n} :
\end{split}
\end{equation*}
The last expression is nothing but the coeficient of $\frac{t^{n}}{n!}$ in $:\mathrm{e}^{tD_{Z}^{*}}:$.
\end{proof} 

Next, we set
$
a(t) = \sum_{n=0}^{\infty} t^{n} a_{n} \in \mathcal{D}(R)[\![t]\!]
$
by the following formula.
\begin{equation}
\label{eq:a(t)_def}
a(t)
=
\det ( 1 + t \Psi_{Z} )
\cdot
\mathcal{E}_{Z} (t)
\cdot
T_{-Z} (t)
\end{equation}

For the next lemma, recall that $\Psi_{i,j} = [ D_{i}, Z_{j} ]$,
the $(i,j)$-th entry of the matrix $\Psi = \Psi_{Z}$.

\medskip

\begin{Lem} 
\label{Lem:an_decomp}
For $n \in \mathbb{N}$, we have
\begin{equation}
\label{eq:a_n}
\begin{split}
a_{n} 
&=
\frac{1}{n!}
\sum_{k=1}^{n}
\binom{n-1}{k-1}
\sum_{i_{1},\cdots ,i_{k}}
D_{i_{1}}^{*}
\mathrm{det}
\left( \begin{array}{ccc}
Z_{i_{1}} & \cdots & Z_{i_{k}} \\
\Psi_{i_{2},i_{1}} & \cdots & \Psi_{i_{2},i_{k}} \\
\vdots & \cdots & \vdots \\
\Psi_{i_{k},i_{1}} & \cdots & \Psi_{i_{k},i_{k}}
\end{array}\right)
:(D_{Z}^{*})^{n-k}:.
\end{split}
\end{equation}
\end{Lem}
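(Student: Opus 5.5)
By Lemma~\ref{Lem:QCMTV}, $a(t)=\det(1+t\Psi_Z)\,:\mathrm{e}^{tD_Z^*}:$. Extracting the coefficient of $t^n$ gives
\begin{equation*}
a_n=\sum_{m=0}^{n}\frac{1}{m!\,(n-m)!}\sum_{I\in\mathbb{N}^m}\det(\Psi\mid I)\,:(D_Z^*)^{n-m}:.
\end{equation*}
The plan is to bring each product $\det(\Psi\mid I):(D_Z^*)^{n-m}:$ into the form of (\ref{eq:a_n}). This means moving a single $D^*_{i_1}$ to the far left and collecting the leftover multiplication operators into the displayed $k\times k$ determinant, whose first row is $Z$ and whose other rows are $\Psi$. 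One factor of $D_Z^*$ is reserved for this: the first row of the determinant, $Z_{i_1},\dots,Z_{i_k}$, corresponds to the coefficient $Z_{i_1}$ in one factor $Z_{i_1}D^*_{i_1}$ of $:(D_Z^*)^{n-m}:$.

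The key tool is a commutation rule for $D^*_i$. Write $:(D_Z^*)^{r}:=\sum_{j_1,\dots,j_r}Z_{j_1}\cdots Z_{j_r}D^*_{j_1}\cdots D^*_{j_r}$. Since the $D^*_j$ commute among themselves, $Z_jD^*_j=D^*_jZ_j-[D^*_j,Z_j]=D^*_jZ_j+\Psi_{j,j}$, using $[D^*_j,f]=-[D_j,f]$.

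I would first reorganize the right-hand side of (\ref{eq:a_n}) by expanding the determinant along its first row. Moving $D^*_{i_1}$ past a product of $\Psi$'s and $Z$'s produces commutator terms $[D_{i_1},\cdot\,]$ by the Leibniz rule, since $D_i$ is first order. Derivatives of $\Psi$ entries, i.e.\ second-order terms $[D_i,[D_j,Z_l]]$, will appear, and they must cancel. I expect them to cancel by antisymmetry of the determinant in the column indices, together with the symmetry $[D_i,[D_j,Z_l]]=[D_j,[D_i,Z_l]]$, which follows from $[D_i,D_j]=0$. With these cancellations, the right-hand side becomes a sum of terms $D^*$-free multiplication factors times $:(D_Z^*)^{r}:$.

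Next comes induction on $n$, which I expect to be the cleanest route. I would compare $n\,a_n$ with $\sum_i D^*_i(\ldots)$, mirroring how Theorem~\ref{Thm:main} is eventually proved through the recursion $a_{n+1}=\frac1{n+1}\sum_k b_k a_{n-k}$. Concretely, I would prove the identity by matching coefficients. Fix a multiset of indices. In $\det(\Psi\mid I)\,:(D_Z^*)^{n-m}:$, the term coming from the $k=m+1$ summand of (\ref{eq:a_n}) is produced in two ways. Either $Z_{i_1}$ is the first-row entry in the same column as $i_1$, which gives a $D^*_{i_1}Z_{i_1}$ that then merges into $:(D_Z^*)^{n-k+1}:$; or the first-row entry lies in another column, which gives the Laplace-expansion cofactor terms. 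These cofactor terms should combine with the commutators $\Psi_{j,j}$ and off-diagonal $\Psi$'s from reordering to reproduce $\det(\Psi\mid I)$ with an extra index.

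The combinatorial weights also need checking. The identity $\binom{n-1}{k-1}\frac{1}{n!}$ versus $\frac{1}{m!(n-m)!}$ with $k=m+1$ requires a factor $\frac{k}{n}$. The $\frac1n$ should come from symmetrizing which factor of $(D_Z^*)$ is pulled out, and the $k$ from the choice of which column of the $k\times k$ determinant is placed first. Both factors need to be verified.

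The main obstacle is the bookkeeping of these noncommutative reorderings: showing that the derivative-of-$\Psi$ terms cancel, and that the lower-order terms from $[D^*_{i_1},Z]$ exactly supply the $\Psi$ rows of the bordered determinant. I would first verify the cases $n=1$ and $n=2$ by hand to fix signs, then write the general step using Laplace expansion along the first row.
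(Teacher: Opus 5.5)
Your first step (the formula for $a_n$ via Lemma~\ref{Lem:QCMTV}) and your ingredients match the paper's: pushing $D^*_{i_1}$ to the right by the Leibniz rule, killing the second-commutator terms by column antisymmetry together with $[D_i^*,[D_j^*,Z_l]]=[D_j^*,[D_i^*,Z_l]]$, and a Laplace expansion. What is missing is the mechanism that makes the identity close, and the weight accounting you do give is wrong.

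The summand $k=m+1$ of (\ref{eq:a_n}) carries $\frac{1}{n!}\binom{n-1}{m}$. That is only the fraction $\frac{n-m}{n}$ of the target coefficient $\frac{1}{m!(n-m)!}$ of $\sum_{I\in\mathbb{N}^m}\det(\Psi_Z\mid I):(D_Z^*)^{n-m}:$. Your $k/n$ is the fraction contributed by the summand $k=m$, not by $k=m+1$. Each target term is fed by \emph{two adjacent} summands, and Pascal's rule adds them up.

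Concretely, the paper writes $:(D_Z^*)^{n-k}:=\sum Z_{i_{k+1}}\cdots Z_{i_n}D^*_{i_{k+1}}\cdots D^*_{i_n}$. It then splits the $k$-th summand into three pieces:
$A_{I,k}$, the commutator of $D^*_{i_1}$ with the bordered determinant;
$B_{I,k}$, the commutators with the trailing $Z$'s;
$C_{I,k}$, the term with $D^*_{i_1}$ moved to the right.

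Lemma~\ref{Lem:adin} gives $\sum_I A_{I,k}=\sum_{I\in\mathbb{N}^k}\det(\Psi_Z\mid I):(D_Z^*)^{n-k}:$ with weight $\binom{n-1}{k-1}$. The commutator with the first row turns the $Z$-row into a $\Psi$-row, and your second-derivative cancellation disposes of the other rows.

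Now expand $C_{I,k+1}$ along its first column. The $Z_{i_1}$-cofactor gives the same term with weight $\binom{n-1}{k}$. The other $k$ cofactor terms relabel to $-\frac{k}{n-k}\sum_I B_{I,k}$, and they cancel the $B$-terms exactly because $k\binom{n-1}{k}=(n-k)\binom{n-1}{k-1}$.

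So the non-diagonal cofactor terms do not reproduce $\det(\Psi\mid I)$ with an extra index, as you expect. They cancel against the trailing-$Z$ commutators of the preceding summand, and the extra index comes from the $A$-terms. Without this cross-summand cancellation the argument does not close, and no induction on $n$ is needed.

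Separately, your rule $Z_jD_j^*=D_j^*Z_j+\Psi_{j,j}$ uses $\Psi_{i,j}=[D_i,Z_j]$. With that convention the statement already fails at $n=1$. There $a_1=\mathrm{Tr}\,\Psi_Z+\sum_i Z_iD_i^*=\sum_i D_i^*Z_i+2\,\mathrm{Tr}\,\Psi_Z$, whereas the right-hand side of (\ref{eq:a_n}) is $\sum_i D_i^*Z_i$. A concrete instance: take $R=\mathbb{R}[x]$, $D_1=\mathrm{d}/\mathrm{d}x$, $D_1^*=-\mathrm{d}/\mathrm{d}x+x$, $Z_1=x$, and all other $D_i,D_i^*,Z_i$ zero; then $\mathrm{Tr}\,\Psi_Z=1$.

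The lemma holds, and the paper's proof actually computes, with $\Psi_{i,j}=[D_i^*,Z_j]=-[D_i,Z_j]$. You can see this in its $n=1$ check, and in Lemma~\ref{Lem:adin}, where the commuted first row $[D^*_{i_1},Z_{i_j}]$ is identified with $\Psi_{i_1,i_j}$. This is despite the ``recall'' of $[D_i,Z_j]$ just before the lemma.

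With your sign, $[D^*_{i_1},Z]$ produces $-\Psi$ instead of $\Psi$. The $A$-terms then come with the wrong sign, and the $B$-terms double instead of cancelling. So the sign must be settled before the bookkeeping, not fixed afterwards.
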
 

We would like to obtain a differential equation satisfied by $a(t)$ of the form $a^{\prime}(t) = b(t) a(t)$. In general, for
$c(t) = \sum_{n=0}^{\infty} t^{n} c_{n}$,
$e(t) = \sum_{n=0}^{\infty} t^{n} e_{n} \in \mathcal{D}(R)[\![ t ]\!]$,
the relation $a^{\prime}(t) = c(t) e(t)$ is equivalent, by comparing coefficients, to the following relations.
\begin{equation*}
a_{n} = \frac{1}{n} \sum_{k=1}^{n} c_{k-1} \, e_{n-k},
\quad
n \in \mathbb{N}.
\end{equation*}

We rewrite (\ref{eq:a_n}) as follows:
\begin{equation*}
\begin{split}
a_{n} 
&=
\frac{1}{n}
\sum_{k=1}^{n}
{\color{light-gray}
\underbrace{
    {\color{black}
    \frac{ 1 }{ (k-1)! }
    \sum_{i_{1},\cdots ,i_{k}}
    D_{i_{1}}^{*}
    \mathrm{det}
    \left( \begin{array}{ccc}
    Z_{i_{1}} & \cdots & Z_{i_{k}} \\
    \Psi_{i_{2},i_{1}} & \cdots & \Psi_{i_{2},i_{k}} \\
    \vdots & \cdots & \vdots \\
    \Psi_{i_{k},i_{1}} & \cdots & \Psi_{i_{k},i_{k}}
    \end{array}\right)
    }
}_{\text{
    the term corresponding to $c_{k-1}$
}}
}
{\color{light-gray}
\underbrace{
    {\color{black}
    \frac{ 1 }{ (n-k)! }
    :(D_{Z}^{*})^{n-k}:
    }
}_{\text{{\scriptsize $\displaystyle
    \begin{array}{c}
    \text{the term} \\
    \text{corresponding to $e_{n-k}$}
    \end{array}
$}}}
}.
\end{split}
\end{equation*}
we observe that the part corresponding to $e(t)$ appears in the form
$
e(t)
\;=\;
:\mathrm{e}^{tD_{Z}^{*}}:
\;=\;
\mathcal{E}_{Z}(t) \cdot T_{-Z}(t)
$.
By computing the term corresponding to $c(t)$, we see that $\det ( 1 + t \Psi_{Z} )$ appears as a factor on the right-hand side. Combining this with $e(t)$, we obtain
$
\det ( 1 + t \Psi_{Z} ) e(t)
=
\det ( 1 + t \Psi_{Z} ) \cdot \mathcal{E}_{Z}(t) \cdot T_{-Z}(t)
=
a(t)
$,
and hence we obtain a differential equation of the form $a^{\prime} (t) = b(t) a(t)$. That is, what we want to do next is to decompose $c(t)$, where $c(t)$ is given by
\begin{equation*}
c_{n}
=
\frac{ 1 }{ n! }
\sum_{i_{1},\cdots ,i_{n+1}}
D_{i_{1}}^{*}
\mathrm{det}
\left( \begin{array}{ccc}
Z_{i_{1}} & \cdots & Z_{i_{n+1}} \\
\Psi_{i_{2},i_{1}} & \cdots & \Psi_{i_{2},i_{n+1}} \\
\vdots & \cdots & \vdots \\
\Psi_{i_{n+1},i_{1}} & \cdots & \Psi_{i_{n+1},i_{n+1}}
\end{array} \right). 
\end{equation*}

\medskip

\begin{Lem}
\label{Lem:cn_decomp}
For each $n \in \mathbb{N}$, we have
\begin{equation*}
c_{n}
=
\sum_{k=0}^{n}
\mathrm{Tr}( \Phi_{Z} (-\Psi_{Z})^{k} )
\frac{ 1 }{ (n-k)! }
\sum_{ I \in \mathbb{N}^{n-k} }
\det ( \Psi_{Z} \mid I) .
\end{equation*}
\end{Lem}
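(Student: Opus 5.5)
Expand the determinant in the definition of $c_n$ along its first row. This is the classical cofactor/cycle expansion of a bordered determinant. Write the $(n+1)\times(n+1)$ determinant as $\sum_{\sigma\in\mathfrak S_{n+1}}\mathrm{sgn}(\sigma)\,Z_{i_{\sigma(1)}}\prod_{r\ge 2}\Psi_{i_r,i_{\sigma(r)}}$, keeping the factors in the row order $1,2,\dots,n+1$. Decompose $\sigma$ into its cycle containing $1$ and the permutation on the remaining indices. Suppose the cycle through $1$ has length $k+1$, so $1\to j_1\to\cdots\to j_k\to 1$ (read as $\sigma(1)=j_1$, etc.). Then the entries along this cycle form a chain
\[
Z_{i_{j_1}}\,\Psi_{i_{j_1},i_{j_2}}\cdots\Psi_{i_{j_k},i_{1}} ,
\]
up to reordering of the commuting factors. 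The complementary permutation on the other $n-k$ indices produces a principal minor of $\Psi$. The sign contributes $(-1)^k$ from the $(k+1)$-cycle.

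All entries involved lie in $R$: the $Z_j$ and $\Psi_{i,j}=[D_i,Z_j]$ are multiplication operators, so they commute. Only the leftmost $D^*_{i_1}$ is noncommutative, and the row-order convention in $\det(\,\cdot\mid I)$ is harmless. Next, sum over the free indices $i_{j_1},\dots,i_{j_k}$, which are contracted along the chain. This gives $\sum_{i_1}D^*_{i_1}\big(Z(\Psi)^k\big)$-type expressions, namely
\[
\sum_{i_1,\ell_1,\dots,\ell_k}D^*_{i_1}\,Z_{\ell_1}\Psi_{\ell_1,\ell_2}\cdots\Psi_{\ell_k,i_1} .
\]
Here I would move the product $\Psi_{\ell_1,\ell_2}\cdots\Psi_{\ell_k,i_1}$ to the left past $D^*_{i_1}$. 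The danger is that $D^*_{i_1}$ acts on it. So the first technical point is to identify $\sum D^*_{i_1}(\cdots)$ with $\mathrm{Tr}(\Phi(-\Psi)^k)$ multiplied by something.

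Concretely, I would set up the bookkeeping so that the chain closes into the trace
\[
\sum \Phi_{i_1,\ell_1}\Psi_{\ell_1,\ell_2}\cdots\Psi_{\ell_k,i_1}
=\mathrm{Tr}(\Phi\Psi^k),
\]
with $\Phi_{i,j}=D_i^*Z_j$ read as an operator product in $\mathcal D(R)$. The complementary minor is placed to the right of it, consistent with the ordering $\mathrm{Tr}(\cdots)\cdot\det(\Psi\mid I)$ in the claim. This requires choosing the cycle expansion along the first column rather than the first row, so that $D^*_{i_1}$ is immediately followed by a $Z$-entry of the chain. Equivalently, expand via $\det A=\det A^{T}$, which is legitimate because all entries commute. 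I will then check that the remaining factors are multiplication operators, which can be written to the right without commutators. The operator $D^*_{i_1}$ still acts on everything to its right, so $D^*_{i_1}\,X\,Y=(D^*_{i_1}X)Y$ holds only formally as an operator product, not as an action. Since the statement is an identity of operator products in $\mathcal D(R)$, we never need to commute $D^*$ through. The main obstacle I expect is precisely ensuring that the ordering produced by the expansion matches: $D^*$, then the chain, then the minor. I would handle it by writing both sides as sums of operator words $D^*_{i}\,m$ with $m\in R$, and comparing the $m$'s as commutative polynomials.

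Finally, count multiplicities. Fix $k$ and the set of $k$ positions (among $2,\dots,n+1$) that lie on the cycle with $1$, together with their cyclic order. There are $n!/(n-k)!$ ordered choices. After relabeling the dummy indices, each choice gives the same contribution
\[
(-1)^k\,\mathrm{Tr}(\Phi\Psi^k)\sum_{I\in\mathbb N^{n-k}}\det(\Psi\mid I).
\]
Dividing by the prefactor $n!$ in $c_n$ yields $\frac{1}{(n-k)!}$. Summing over $k=0,\dots,n$ gives the stated formula, since $(-1)^k\Psi^k=(-\Psi)^k$.
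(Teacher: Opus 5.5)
Your proof is correct, and it takes a different route from the paper's. The paper argues recursively. A single first-row cofactor expansion gives $\mathrm{Tr}(\Phi_{Z})\sum_{I\in\mathbb{N}^{n}}\det(\Psi_{Z}\mid I)-nX_{2}$. It then introduces $X_{k}=\sum\Phi_{i_{1},i_{2}}\Psi_{i_{2},i_{3}}\cdots\Psi_{i_{k-1},i_{k}}\det(\cdots)$ and expands each of these along its first row again, relabeling dummy indices each time. This gives a recurrence linking $X_{k}$, $\mathrm{Tr}(\Phi_{Z}\Psi_{Z}^{k-1})$, a sum of principal minors and $X_{k+1}$, which it unwinds down to an explicit $2\times 2$ computation of $X_{n}$. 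In effect the paper peels off the cycle of $\sigma$ through the index $1$ one edge at a time.

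You instead split every permutation at once into that cycle and a complementary permutation, so the sign $(-1)^{k}$ and the multiplicity $n!/(n-k)!$ can be read off directly. This is shorter and avoids the recurrence's coefficient bookkeeping (the factor in front of $X_{k+1}$ must come out as $n+1-k$). The paper's route has the merit of reusing exactly the cofactor-and-relabel technique of Lemma~\ref{Lem:an_decomp}. Both rest on the point you make explicit: everything to the right of $D^{*}_{i_{1}}$ lies in the commutative algebra $R$, so it suffices to compare the $R$-coefficients for each fixed $i_{1}$.

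Two remarks in your write-up can simply be deleted. You never need to move anything past $D^{*}_{i_{1}}$. Nor do you need to switch to a column expansion: the row expansion $\sum_{\sigma}\mathrm{sgn}(\sigma)\,Z_{i_{\sigma(1)}}\prod_{r\geq 2}\Psi_{i_{r},i_{\sigma(r)}}$ you wrote first already yields the chain $Z_{\ell_{1}}\Psi_{\ell_{1},\ell_{2}}\cdots\Psi_{\ell_{k},i_{1}}$ ending at $i_{1}$. Then $D^{*}_{i_{1}}Z_{\ell_{1}}=\Phi_{i_{1},\ell_{1}}$ as an operator product, which closes the trace $\mathrm{Tr}(\Phi_{Z}\Psi_{Z}^{k})$.
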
 

By this lemma, we see that the relation $c(t) = b(t) d(t)$ is satisfied by $b(t)$ and $d(t)$ defined as follows:
\begin{equation*}
\begin{split}
b(t)
&=
\sum_{n=0}^{\infty}
[\, \mathrm{Tr}( \Phi_{Z} (-\Psi_{Z})^{n} ) \,] \, t^{n}, \\
d(t)
&=
\sum_{n=0}^{\infty}
\frac{ t^{n} }{ n! }
\sum_{I \in \mathbb{N}^{n}} \det ( \Psi_{Z} \mid I )
=
\det ( 1 + t \Psi_{Z} ).
\end{split}
\end{equation*}
Now, since
$
a^{\prime} (t)
= c(t) e(t)
= b(t) d(t) e(t)
= b(t) a(t)
$
it follows that $a(t)$ satisfies the differential equation $a^{\prime}(t) = b(t) a(t)$. Since its solution is written as $\mathcal{T}\!\exp (\int_{0}^{t} b(t^{\prime}) \,\mathrm{d}t^{\prime})$, Theorem~\ref{Thm:main} follows.

It remains to prove Lemmas~\ref{Lem:an_decomp} and \ref{Lem:cn_decomp} introduced above.

\subsection{Proofs of Lemmas~\ref{Lem:an_decomp} and \ref{Lem:cn_decomp}}

\begin{proof}[Proof of Lemma~\ref{Lem:an_decomp}]
Let $n \in \mathbb{N}$ be arbitrary.
By (\ref{eq:det}) and
$
\mathcal{E}_{Z} \cdot T_{-Z}(t)
\;=\;
:\mathrm{e}^{ t D_{Z}^{*} }:
=
\sum_{n=1}^{\infty} \frac{t^{n}}{n!} :( D_{Z}^{*} )^{n}:
$
(Lemma~\ref{Lem:QCMTV}),
we have the following:
\begin{equation*}
a_{n} 
=
\frac{1}{n!}
\sum _{k=0}^{n} \binom{n}{k} 
\sum_{I \in \mathbb{N}^k } \det (\Psi_{Z} \mid I)
\;
:( D_{Z}^{*} )^{n-k}:.
\end{equation*}
In the case $n=1$, this becomes
$
:D_{Z}^{*}:
+
\sum_{i \in \mathbb{N}} \Psi_{i,i}
=
\sum_{i \in \mathbb{N}} ( Z_{i} D_{i}^{*} + [\, D_{i}^{*}, Z_{i} \,] )
=
\sum_{i \in \mathbb{N}} D_{i}^{*} Z_{i}
$,
which indeed coincides with the right-hand side of (\ref{eq:a_n}). Therefore, in what follows, we consider only the case $n\geq 2$.

Let $A=(A_{i,j})_{i,j\in\mathbb{N}}$ be an $\mathbb{N} \times \mathbb{N}$ matrix,
let $B=(B_{1}, B_{2}, B_{3}, \ldots)$ be a sequense,
and let
$I = ( i_{1}, i_{2}, \ldots , i_{n} ) \in \mathbb{N}^{n}$,
$J = ( j_{1}, j_{2}, \ldots , j_{m} ) \in \mathbb{N}^{m}$.
We define the submatrix $A_{J}^{I}$ and the subvector $B_{J}$ as follows:
\begin{equation*}
\begin{split}
A_{J}^{I}
=
\left(\begin{array}{cccc}
A_{i_{1},j_{1}} & A_{i_{1},j_{2}} & \cdots & A_{i_{1},j_{m}} \\
A_{i_{2},j_{1}} & A_{i_{2},j_{2}} & \cdots & A_{i_{2},j_{m}} \\
\vdots & \vdots & \cdots & \vdots \\
A_{i_{n},j_{1}} & A_{i_{n},j_{2}} & \cdots & A_{i_{n},j_{m}}
\end{array}\right) ,
\quad
B_{J}
=
( B_{j_{1}}, B_{j_{2}}, \ldots , B_{j_{n}} ).
\end{split}
\end{equation*}

We decompose the right-hand side of (\ref{eq:a_n}), which is the expression we wish to establish, as follows.
\begin{equation}
\label{eq:RHS_a_n}
\begin{split}
&
n! \cdot (\text{RHS of (\ref{eq:a_n})}) \\
&=
\sum_{k=1}^{n}
\binom{n-1}{k-1}
\sum_{i_{1},\cdots ,i_{k}}
D_{i_{1}}^{*}
\mathrm{det}
\left( \begin{array}{c}
Z_{ (i_{1}, i_{2}, \ldots , i_{k}) } \\\hdashline
\Psi_{ ( i_{1}, i_{2}, \ldots , i_{k} ) }^{ ( i_{2}, \ldots , i_{k} ) }
\end{array} \right)
:(D_{Z}^{*})^{n-k}: \\
&=
\sum_{k=1}^{n-1}
\binom{n-1}{k-1}
\sum_{ I \in \mathbb{N}^{n} }
D_{i_{1}}^{*}
\mathrm{det}
\left( \begin{array}{c}
Z_{ (i_{1}, i_{2}, \ldots , i_{k}) } \\\hdashline
\Psi_{ ( i_{1}, i_{2}, \ldots , i_{k} ) }^{ ( i_{2}, \ldots , i_{k} ) }
\end{array} \right)
Z_{i_{k+1}} Z_{i_{k+2}} \cdots Z_{i_{n}}
D_{i_{k+1}}^{*} D_{i_{k+2}}^{*} \cdots D_{i_{n}}^{*} \\
&\hspace{25mm}+
\sum_{ I \in \mathbb{N}^{n} }
D_{i_{1}}^{*}
\mathrm{det}
\left( \begin{array}{c}
Z_{ ( i_{1}, i_{2}, \ldots , i_{n} ) } \\\hdashline
\Psi_{ ( i_{1}, i_{2}, \ldots , i_{n} ) }^{ ( i_{2}, \ldots , i_{n} ) }
\end{array} \right) .
\end{split}
\end{equation}

Let
$I = (i_{1}, i_{2}, \ldots , i_{n}) \in \mathbb{N}^{n}$
and
$k=1,2,\ldots , n-1$ be arbitrary.
By the definition of commutator, we have
\begin{equation*}
\begin{split}
&
D_{i_{1}}^{*}
\mathrm{det}
\left(\begin{array}{c}
Z_{ (i_{1}, i_{2}, \ldots , i_{k}) } \\\hdashline
\Psi_{ (i_{1}, i_{2}, \ldots , i_{k}) }^{ (i_{2}, \ldots , i_{k}) }
\end{array}\right)
Z_{i_{k+1}} \cdots Z_{i_{n}} \\
&=
[\,
D_{i_{1}}^{*},
\mathrm{det}
\left(\begin{array}{c}
Z_{ ( i_{1}, i_{2}, \ldots , i_{k} ) } \\\hdashline
\Psi_{ (i_{1}, i_{2}, \ldots , i_{k}) }^{ (i_{2}, \ldots , i_{k}) }
\end{array}\right)
Z_{i_{k+1}} \cdots Z_{i_{n}}
\,]
+
\mathrm{det}
\left(\begin{array}{c}
Z_{ ( i_{1}, i_{2}, \ldots , i_{k} ) } \\\hdashline
\Psi_{ (i_{1}, i_{2}, \ldots , i_{k}) }^{ (i_{2}, \ldots , i_{k}) }
\end{array}\right)
Z_{i_{k+1}} \cdots Z_{i_{n}}
D_{i_{1}}^{*}.
\end{split}
\end{equation*}
For the first term of the right-hand side, we apply the Leibniz' rule of the Lie bracket, which gives the following:
\begin{equation*}
\begin{split}
&
[\,
D_{i_{1}}^{*},
\mathrm{det}
\left(\begin{array}{c}
Z_{ ( i_{1}, i_{2}, \ldots , i_{k} ) } \\\hdashline
\Psi_{ (i_{1}, i_{2}, \ldots , i_{k}) }^{ (i_{2}, \ldots , i_{k}) }
\end{array}\right)
Z_{i_{k+1}} \cdots Z_{i_{n}}
\,] \\
&= 
[\,
D_{i_{1}}^{*},
\mathrm{det}
\left(\begin{array}{c}
Z_{ ( i_{1}, i_{2}, \ldots , i_{k} ) } \\\hdashline
\Psi_{ (i_{1}, i_{2}, \ldots , i_{k}) }^{ (i_{2}, \ldots , i_{k}) }
\end{array}\right)
\,]\,
Z_{i_{k+1}} \cdots Z_{i_{n}} \\
&\hspace{30mm}+
\sum_{j=1}^{n-k}
\mathrm{det}
\left(\begin{array}{c}
Z_{ ( i_{1}, i_{2}, \ldots , i_{k} ) } \\\hdashline
\Psi_{ (i_{1}, i_{2}, \ldots , i_{k}) }^{ (i_{2}, \ldots , i_{k}) }
\end{array}\right)
Z_{i_{k+1}}
\cdots
[\, D_{i_{1}}^{*}, Z_{i_{k+j}} \,]
\cdots Z_{i_{n}} .
\end{split}
\end{equation*}
Substituting this into the expression above and then multiplying by $D_{i_{k+1}}^{*} \cdots D_{i_{n}}^{*}$ from the right, we obtain the following decomposition.
\begin{equation*}
\begin{split}
&
D_{i_{1}}^{*}
\mathrm{det}
\left(\begin{array}{c}
Z_{ ( i_{1}, i_{2}, \ldots , i_{k} ) }  \\\hdashline
\Psi_{ (i_{1}, i_{2}, \ldots , i_{k}) }^{ (i_{2}, \ldots , i_{k}) }
\end{array}\right)
Z_{i_{k+1}} \cdots Z_{i_{n}}
D_{i_{k+1}}^{*} \cdots D_{i_{n}}^{*}\\
&= 
\underbrace{
    [\,
    D_{i_{1}}^{*},
    \mathrm{det}
    \left(\begin{array}{c}
    Z_{ ( i_{1}, i_{2}, \ldots , i_{k} ) } \\\hdashline
    \Psi_{ (i_{1}, i_{2}, \ldots , i_{k}) }^{ (i_{2}, \ldots , i_{k}) }
    \end{array}\right)
    \,]\,
    Z_{i_{k+1}} \cdots Z_{i_{n}}
    D_{i_{k+1}}^{*} \cdots D_{i_{n}}^{*}
}_{\text{{\small $\displaystyle
    \phantom{A_{I,k}} = A_{I,k}
$}}} \\
&\hspace{10mm}+
\underbrace{
    \sum_{l=1}^{n-k}
    \mathrm{det}
    \left(\begin{array}{c}
    Z_{ ( i_{1}, i_{2}, \ldots , i_{k} ) } \\\hdashline
    \Psi_{ (i_{1}, i_{2}, \ldots , i_{k}) }^{ (i_{2}, \ldots , i_{k}) }
    \end{array}\right)
    Z_{i_{k+1}}
    \cdots
    {\color{light-gray}
    \underbrace{
        {\color{black}
        [\, D_{i_{1}}^{*}, Z_{i_{k+l}} \,]
        }
    }_{
        \phantom{-\Psi_{i_{1}, i_{k+l}}}
        =
        - \Psi_{i_{1}, i_{k+l}}
    }
    }
    \cdots Z_{i_{n}}
    D_{i_{k+1}}^{*} \cdots D_{i_{n}}^{*}
}_{\text{{\small $\displaystyle
    \phantom{B_{I,k}} = B_{I,k}
$}}} \\
&\hspace{20mm}+
\underbrace{
    \mathrm{det}
    \left(\begin{array}{c}
    Z_{ ( i_{1}, i_{2}, \ldots , i_{k} ) } \\\hdashline
    \Psi_{ (i_{1}, i_{2}, \ldots , i_{k}) }^{ (i_{2}, \ldots , i_{k}) }
    \end{array}\right)
    Z_{i_{k+1}} \cdots Z_{i_{n}}
    D_{i_{1}}^{*} D_{i_{k+1}}^{*} \cdots D_{i_{n}}^{*}
}_{\text{{\small $\displaystyle
    \phantom{C_{I,k}} = C_{I,k}
$}}} .
\end{split}
\end{equation*}
Here, we define $A_{I,k}$, $B_{I,k}$ and $C_{I,k}$ as above.
By convention, we set $B_{I,n} = 0$.
It then follows from (\ref{eq:RHS_a_n}) that
\begin{equation}
\label{eq:RHS}
\begin{split}
&
n! \cdot (\text{RHS of (\ref{eq:a_n})}) \\
&= 
\sum_{k=1}^{n-1}
\binom{n-1}{k-1}
\sum_{ I \in \mathbb{N}^{n} }
{\color{light-gray}
\underbrace{
    {\color{black}
    D_{i_{1}}^{*}
    \mathrm{det}
    \left( \begin{array}{c}
    Z_{ (i_{1}, i_{2}, \ldots , i_{k}) } \\\hdashline
    \Psi_{ ( i_{1}, i_{2}, \ldots , i_{k} ) }^{ ( i_{2}, \ldots , i_{k} ) }
    \end{array} \right)
    Z_{i_{k+1}} Z_{i_{k+2}} \cdots Z_{i_{n}}
    D_{i_{k+1}}^{*} D_{i_{k+2}}^{*} \cdots D_{i_{n}}^{*}
    }
}_{\text{{\small $\displaystyle
    \phantom{A_{I,k} + B_{I,k} + C_{I,k}}
    =
    A_{I,k} + B_{I,k} + C_{I,k}
$}}}
} \\
&\hspace{25mm}+
\sum_{ I \in \mathbb{N}^{n} }
{\color{light-gray}
\underbrace{
    {\color{black}
    D_{i_{1}}^{*}
    \mathrm{det}
    \left(\begin{array}{c}
    Z_{ (i_{1}, i_{2}, \ldots , i_{n}) } \\\hdashline
    \Psi_{ ( i_{1}, i_{2}, \ldots , i_{n} ) }^{ ( i_{2}, \ldots , i_{n} ) }
    \end{array}\right)
    }
}_{\text{{\small $\displaystyle
    \phantom{C_{I,n}}
    =
    C_{I,n}
$}}}
} \\
&= 
\sum_{ I \in \mathbb{N}^{n} }
\sum_{k=1}^{n-1}
\binom{n-1}{k-1}
\left( A_{I,k} + B_{I,k} + C_{I,k} \right)
+
\sum_{ I \in \mathbb{N}^{n} }
(A_{I,n} + C_{I,n}) \\
&= 
\sum_{ I \in \mathbb{N}^{n} }
\Bigg(
    C_{I,1}
    +
    \sum_{k=1}^{n-1}
    \left\{
        \binom{n-1}{k-1}
        \left( A_{I,k} + B_{I,k} \right)
        +
        \binom{n-1}{k} C_{I,k+1}
    \right\}
    + A_{I,n}
\Bigg) \\
&= 
\hspace{-22mm}
{\color{light-gray}
    \hspace{-22mm}
    {\color{light-gray}
    \underbrace{
        {\color{black}
        \sum_{ I \in \mathbb{N}^{n} } C_{I,1}
        }
    }_{\text{{\scriptsize $\displaystyle
        \begin{array}{c}
        \phantom{\displaystyle \sum_{ I \in \mathbb{N}^{n}} Z_{i_{1}} Z_{i_{2}} \cdots Z_{i_{n}}
        D_{i_{1}}^{*} D_{i_{2}}^{*} \cdots D_{i_{n}}^{*}}
        = \displaystyle \sum_{ I \in \mathbb{N}^{n}} Z_{i_{1}} Z_{i_{2}} \cdots Z_{i_{n}}
        D_{i_{1}}^{*} D_{i_{2}}^{*} \cdots D_{i_{n}}^{*} \\
        \phantom{\; :(D_{Z}^{*})^{n}:}
        =\; :(D_{Z}^{*})^{n}:
        \end{array}
    $}}}
    }
    \hspace{-26mm}
}
\hspace{-11mm}
+
\sum_{k=1}^{n-1}
\Bigg\{
    {\color{light-gray}
    \underbrace{
        {\color{black}
        \binom{n-1}{k-1}
        \sum_{ I \in \mathbb{N}^{n} }
        \left( A_{I,k} + B_{I,k} \right)
        +
        \binom{n-1}{k}
        \sum_{ I \in \mathbb{N}^{n} }
        C_{I,k+1}
        }
    }_{\text{{\scriptsize
        will be computed in what follows
    }}}
    }
\Bigg\}
+\hspace{-22mm}
    {\color{light-gray}
    \underbrace{
        {\color{black}
        \sum_{I \in \mathbb{N}^{n}} A_{I,n}
        }
    }_{\text{{\scriptsize $\displaystyle
    \phantom{\sum_{I \in \mathbb{N}^{n}} \det ( \Psi_{Z} \mid I )} \stackrel{\text{Lemma~\ref{Lem:adin}}}{=}
    \sum_{I \in \mathbb{N}^{n}} \det ( \Psi_{Z} \mid I )
    $}}}
    }
    \hspace{-11.5mm}.
\end{split}
\end{equation}

In what follows, we compute the second term on the right-hand side of the above expression. Let $k$ be arbitrary such that $1 \leq k \leq n-1$. First, in order to compute $\sum_{I \in \mathbb{N}^{n}} B_{I,k}$, we fix $l$ arbitrariry such that $1 \leq l \leq n-k$, and define a transformation $I \mapsto J = (j_{1}, j_{2}, \ldots , j_{n})$ of $\mathbb{N}^{n}$ by the following relation.
$$
I = (
\underbrace{i_{1}, \ldots , i_{k}}_{
    \begin{array}{c}
    \text{{\small \rotatebox{90}{$=$}}} \vspace{-2mm}\\
    \text{{\small $j_{1}, \ldots , j_{k}$}}
    \end{array}
},
\underbrace{i_{k+1}, \ldots , i_{k+l-1}}_{
    \begin{array}{c}
    \text{{\small \rotatebox{90}{$=$}}} \vspace{-2mm}\\
    \text{{\small $j_{k+2}, \ldots , j_{k+l}$}}
    \end{array}
},
\underbrace{i_{k+l}}_{
    \begin{array}{c}
    \text{{\small \rotatebox{90}{$=$}}} \vspace{-2mm}\\
    \text{{\small $j_{k+1}$}}
    \end{array}
},
\underbrace{i_{k+l+1}, \ldots , i_{n}}_{
    \begin{array}{c}
    \text{{\small \rotatebox{90}{$=$}}} \vspace{-2mm}\\
    \text{{\small $j_{k+l+1}, \ldots , j_{n}$}}
    \end{array}
}
)
$$
Then the computation of $\sum_{I \in \mathbb{N}^{n}} B_{I,k}$ goes as follows:
\begin{equation}
\label{eq:B_sum}
\begin{split}
\sum_{I \in \mathbb{N}^{n}} B_{I,k}
&= 
\sum_{l=1}^{n-k}
\sum_{I \in \mathbb{N}^{n}}
{\color{light-gray}
\underbrace{
    {\color{black}
    \mathrm{det}
    \left(\begin{array}{c}
    Z_{ ( i_{1}, i_{2}, \ldots , i_{k} ) } \\\hdashline
    \Psi_{ (i_{1}, i_{2}, \ldots , i_{k}) }^{ (i_{2}, \ldots , i_{k}) }
    \end{array}\right)
    Z_{i_{k+1}}
    \cdots
    Z_{i_{k+l-1}}
    \Psi_{i_{1}, i_{k+l}}
    Z_{i_{k+l+1}}
    \cdots Z_{i_{n}}
    D_{i_{k+1}}^{*} \cdots D_{i_{n}}^{*}
    }
}_{\text{{\scriptsize $\displaystyle
    \phantom{ \Psi_{j_{1}, j_{k+1}} \mathrm{det} }
    =
    \Psi_{j_{1}, j_{k+1}}
    \mathrm{det}
    \left(\begin{array}{c}
    Z_{ ( j_{1}, j_{2},  \ldots , j_{k} ) } \\\hdashline
    \Psi_{ (j_{1}, j_{2}, \ldots , j_{k}) }^{ (j_{2}, \ldots , j_{k}) }
    \end{array}\right)
    Z_{j_{k+2}} \cdots Z_{j_{n}}
    D_{j_{k+2}}^{*} \cdots D_{j_{n}}^{*}
$}}}
}\\
&=
\sum_{l=1}^{n-k}
{\color{light-gray}
\underbrace{
    {\color{black}
    \sum_{I \in \mathbb{N}^{n}}
    \Psi_{i_{1}, i_{k+1}}
    \mathrm{det}
    \left(\begin{array}{c}
    Z_{ ( i_{1}, i_{2}, \ldots , i_{k} ) } \\\hdashline
    \Psi_{ (i_{1}, i_{2}, \ldots , i_{k}) }^{ (i_{2}, \ldots , i_{k}) }
    \end{array}\right)
    Z_{i_{k+2}} \cdots Z_{i_{n}}
    D_{i_{k+2}}^{*} \cdots D_{i_{n}}^{*}
    }
}_{\text{doesn't depend on $l$}}
} \\
&= 
(n-k)
\sum_{I \in \mathbb{N}^{n}}
\Psi_{i_{1}, i_{k+1}}
\mathrm{det}
\left(\begin{array}{c}
Z_{ ( i_{1}, i_{2}, \ldots , i_{k} ) } \\\hdashline
\Psi_{ (i_{1}, i_{2}, \ldots , i_{k}) }^{ (i_{2}, \ldots , i_{k}) }
\end{array}\right)
Z_{i_{k+2}} \cdots Z_{i_{n}}
D_{i_{k+2}}^{*} \cdots D_{i_{n}}^{*}
\end{split}
\end{equation}

Next, we focus on $C_{I, k+1}$.
By the cofactor expansion along the first column of the determinant in $C_{I, k+1}$, for $k = 1, \cdots, n-1$, we obtain the following.
\begin{equation*}
\begin{split}
C_{I,k+1}
&= 
\mathrm{det}
\left(\begin{array}{c:ccc}
Z_{i_{1}} & Z_{i_{2}} & \cdots & Z_{i_{k+1}} \\\hdashline
\Psi_{i_{2}, i_{1}} & \Psi_{i_{2}, i_{2}} & \cdots & \Psi_{i_{2}, i_{k+1}} \\
\vdots & \vdots & \cdots & \vdots \\
\Psi_{i_{k+1}, i_{1}} & \Psi_{i_{k+1}, i_{2}} & \cdots & \Psi_{i_{k+1}, i_{k+1}}
\end{array}\right)
Z_{i_{k+2}} \cdots Z_{i_{n}}
D_{i_{1}}^{*} D_{i_{k+2}}^{*} \cdots D_{i_{n}}^{*} \\
&= 
Z_{i_{1}}
\mathrm{det}
(\Psi_{ Z } \mid (i_{2},\ldots , i_{k+1})
)
\cdot
Z_{i_{k+2}} \cdots Z_{i_{n}}
D_{i_{1}}^{*} D_{i_{k+2}}^{*} \cdots D_{i_{n}}^{*} \\
&\hspace{10mm}+
\sum_{l=2}^{k+1}
(-1)^{l-1}
\Psi_{i_{l}, i_{1}}
\det \left(\begin{array}{ccc}
Z_{i_{2}} & \cdots & Z_{i_{k+1}} \\\hdashline
\Psi_{i_{2}, i_{2}} & \cdots & \Psi_{i_{2}, i_{k+1}} \\
\vdots & \cdots & \vdots \\
\Psi_{i_{l-1}, i_{2}} & \cdots & \Psi_{i_{l-1}, i_{k+1}} \\
\Psi_{i_{l+1}, i_{2}} & \cdots & \Psi_{i_{l+1}, i_{k+1}} \\
\vdots & \cdots & \vdots \\
\Psi_{i_{k+1}, i_{2}} & \cdots & \Psi_{i_{k+1}, i_{k+1}}
\end{array}\right)
\cdot
Z_{i_{k+2}} \cdots Z_{i_{n}}
D_{i_{1}}^{*} D_{i_{k+2}}^{*} \cdots D_{i_{n}}^{*}
\end{split}
\end{equation*}
We now observe the following.
\begin{equation*}
\begin{split}
&
\det \left(\begin{array}{ccc}
Z_{i_{2}} & \cdots & Z_{i_{k+1}} \\\hdashline
\Psi_{i_{2}, i_{2}} & \cdots & \Psi_{i_{2}, i_{k+1}} \\
\vdots & \cdots & \vdots \\
\Psi_{i_{l-1}, i_{2}} & \cdots & \Psi_{i_{l-1}, i_{k+1}} \\\hdashline
\Psi_{i_{l+1}, i_{2}} & \cdots & \Psi_{i_{l+1}, i_{k+1}} \\
\vdots & \cdots & \vdots \\
\Psi_{i_{k+1}, i_{2}} & \cdots & \Psi_{i_{k+1}, i_{k+1}}
\end{array}\right)
=
\det \left(\begin{array}{cc:c:cc}
Z_{i_{2}} & \cdots & Z_{i_{l}} & \cdots & Z_{i_{k+1}} \\\hdashline
\Psi_{i_{2}, i_{2}} & \cdots & \Psi_{i_{2}, i_{l}} & \cdots & \Psi_{i_{2}, i_{k+1}} \\
\vdots & \cdots & \vdots & \cdots & \vdots \\
\Psi_{i_{l-1}, i_{2}} & \cdots & \Psi_{i_{l-1}, i_{l}} & \cdots & \Psi_{i_{l-1}, i_{k+1}} \\\hdashline
\Psi_{i_{l+1}, i_{2}} & \cdots & \Psi_{i_{l+1}, i_{l}} & \cdots & \Psi_{i_{l+1}, i_{k+1}} \\
\vdots & \cdots & \vdots & \cdots & \vdots \\
\Psi_{i_{k+1}, i_{2}} & \cdots & \Psi_{i_{k+1}, i_{l}} & \cdots & \Psi_{i_{k+1}, i_{k+1}}
\end{array}\right) \\
&=
(-1)^{l-2}
\det \left(\begin{array}{c:ccc:ccc}
Z_{i_{l}} & Z_{i_{2}} & \cdots & Z_{i_{l-1}} & Z_{i_{l+1}} & \cdots & Z_{i_{k+1}} \\\hdashline
\Psi_{i_{2}, i_{l}} & \Psi_{i_{2}, i_{2}} & \cdots & \Psi_{i_{2}, i_{l-1}} & \Psi_{i_{2}, i_{l+1}} & \cdots & \Psi_{i_{2}, i_{k+1}} \\
\vdots & \vdots & \cdots & \vdots & \vdots & \cdots & \vdots \\
\Psi_{i_{l-1}, i_{l}} & \Psi_{i_{l-1}, i_{2}} & \cdots & \Psi_{i_{l-1}, i_{l-1}} & \Psi_{i_{l-1}, i_{l+1}} & \cdots & \Psi_{i_{l-1}, i_{k+1}} \\\hdashline
\Psi_{i_{l+1}, i_{l}} & \Psi_{i_{l+1}, i_{2}} & \cdots & \Psi_{i_{l+1}, i_{l-1}} & \Psi_{i_{l+1}, i_{l+1}} & \cdots & \Psi_{i_{l+1}, i_{k+1}} \\
\vdots & \vdots & \cdots &  & \vdots & \cdots & \vdots \\
\Psi_{i_{k+1}, i_{l}} & \Psi_{i_{k+1}, i_{2}} & \cdots & \Psi_{i_{k+1}, i_{l-1}} & \Psi_{i_{k+1}, i_{l-+}} & \cdots & \Psi_{i_{k+1}, i_{k+1}}
\end{array}\right)
\end{split}
\end{equation*}
Consequently, $C_{I,k+1}$ admits the following representation.
\begin{equation*}
\begin{split}
C_{I,k+1}
&= 
\mathrm{det}
(\Psi_{ Z } \mid (i_{2},\ldots , i_{k+1})
)
\cdot
Z_{i_{1}} Z_{i_{k+2}} \cdots Z_{i_{n}}
D_{i_{1}}^{*} D_{i_{k+2}}^{*} \cdots D_{i_{n}}^{*} \\
&\hspace{10mm}-
\sum_{l=2}^{k+1}
\Psi_{i_{l}, i_{1}}
\det \left(\begin{array}{c:c}
Z_{i_{l}} & Z_{ ( \widehat{i_{1}}, i_{2}, \ldots , \widehat{i_{l}}, \ldots , i_{k+1}) } \\\hdashline
\Psi_{i_{l}}^{ ( \widehat{i_{1}}, i_{2}, \ldots , \widehat{i_{l}}, \ldots , i_{k+1}) }
& \Psi_{ ( \widehat{i_{1}}, i_{2}, \ldots , \widehat{i_{l}}, \ldots , i_{k+1}) }^{ ( \widehat{i_{1}}, i_{2}, \ldots , \widehat{i_{l}}, \ldots , i_{k+1}) }
\end{array}\right)
\cdot
Z_{i_{k+2}} \cdots Z_{i_{n}}
D_{i_{1}}^{*} D_{i_{k+2}}^{*} \cdots D_{i_{n}}^{*}
\end{split}
\end{equation*}
Summing both sides over
$I = (i_{1}, i_{2}, \ldots , i_{k+1}) \in \mathbb{N}^{n}$,
we obtain the following expression for $\sum_{ I \in \mathbb{N}^{n} } C_{I,k+1}$.
\begin{equation}
\label{eq:C_sum}
\begin{split}
&
\sum_{ I \in \mathbb{N}^{n} } C_{I,k+1} \\
&= 
\sum_{ I \in \mathbb{N}^{n} }
\mathrm{det}
(\Psi_{ Z } \mid (i_{2},\ldots , i_{k+1})
)
\cdot
Z_{i_{1}} Z_{i_{k+2}} \cdots Z_{i_{n}}
D_{i_{1}}^{*} D_{i_{k+2}}^{*} \cdots D_{i_{n}}^{*} \\
&\hspace{10mm}-
\sum_{l=2}^{k+1}
\sum_{ I \in \mathbb{N}^{n} }
\Psi_{i_{l}, i_{1}}
\det \left(\begin{array}{c:c}
Z_{i_{l}} & Z_{ ( \widehat{i_{1}}, i_{2}, \ldots , \widehat{i_{l}}, \ldots , i_{k+1}) } \\\hdashline
\Psi_{i_{l}}^{ ( \widehat{i_{1}}, i_{2}, \ldots , \widehat{i_{l}}, \ldots , i_{k+1}) }
& \Psi_{ ( \widehat{i_{1}}, i_{2}, \ldots , \widehat{i_{l}}, \ldots , i_{k+1}) }^{ ( \widehat{i_{1}}, i_{2}, \ldots , \widehat{i_{l}}, \ldots , i_{k+1}) }
\end{array}\right)
\cdot
Z_{i_{k+2}} \cdots Z_{i_{n}}
D_{i_{1}}^{*} D_{i_{k+2}}^{*} \cdots D_{i_{n}}^{*}
\end{split}
\end{equation}
Let us consider the first term on the right-hand side. Define a transformation $I \mapsto J = (j_{1}, j_{2}, \ldots , j_{n})$ of $\mathbb{N}^{n}$ by the following relation.
$$
I = (
\underbrace{i_{1}}_{
    \begin{array}{c}
    \text{{\small \rotatebox{90}{$=$}}} \vspace{-2mm}\\
    \text{{\small $j_{k+1}$}}
    \end{array}
},
\underbrace{i_{2}, \ldots , i_{k+1}}_{
    \begin{array}{c}
    \text{{\small \rotatebox{90}{$=$}}} \vspace{-2mm}\\
    \text{{\small $j_{1}, \ldots , j_{k}$}}
    \end{array}
},
\underbrace{i_{k+2}, \ldots , i_{n}}_{
    \begin{array}{c}
    \text{{\small \rotatebox{90}{$=$}}} \vspace{-2mm}\\
    \text{{\small $j_{k+2}, \ldots , j_{n}$}}
    \end{array}
}
)
$$
Then, by performing this change of variables, we obtain the following expression.
\begin{equation*}
\begin{split}
&
\sum_{ I \in \mathbb{N}^{n} }
{\color{light-gray}
    {\color{black}
    \mathrm{det} \left(
    \Psi_{Z} \mid (i_{2},\ldots , i_{k+1})\right)
    \cdot
    Z_{i_{1}} Z_{i_{k+2}} \cdots Z_{i_{n}}
    D_{i_{1}}^{*}
    D_{i_{k+2}}^{*} \cdots D_{i_{n}}^{*}
    }
} \\
&=
\sum_{I \in \mathbb{N}^{n}}
\det ( \Psi_{Z} \mid (i_{1}, \ldots , i_{k}) )
\cdot
Z_{i_{k+1}} Z_{i_{k+2}} \cdots Z_{i_{n}}
D_{i_{k+1}}^{*}
D_{i_{k+2}}^{*} \cdots D_{i_{n}}^{*}
\end{split}
\end{equation*}
Next, we focus on the term $\sum_{ I \in \mathbb{N}^{n} } (...)$ appearing as the second term on the right-hand side of (\ref{eq:C_sum}). In this sum, an index $l$ with $2 \leq l \leq k+1$ is fixed. Using this $l$, we define a transformation $I \mapsto J = (j_{1}, j_{2}, \ldots , j_{k+1})$ of $\mathbb{N}^{n}$ by thefollowing relation.
$$
I = (
\underbrace{i_{1}}_{
    \begin{array}{c}
    \text{{\small \rotatebox{90}{$=$}}} \vspace{-2mm}\\
    \text{{\small $j_{k+1}$}}
    \end{array}
},
\underbrace{i_{2}, \ldots , i_{l-1}}_{
    \begin{array}{c}
    \text{{\small \rotatebox{90}{$=$}}} \vspace{-2mm}\\
    \text{{\small $j_{2}, \ldots , j_{l-1}$}}
    \end{array}
},
\underbrace{i_{l}}_{
    \begin{array}{c}
    \text{{\small \rotatebox{90}{$=$}}} \vspace{-2mm}\\
    \text{{\small $j_{1}$}}
    \end{array}
},
\underbrace{i_{l+1}, \ldots , i_{k+1}}_{
    \begin{array}{c}
    \text{{\small \rotatebox{90}{$=$}}} \vspace{-2mm}\\
    \text{{\small $j_{l}, \ldots , j_{k}$}}
    \end{array}
},
\underbrace{i_{k+2}, \ldots , i_{n}}_{
    \begin{array}{c}
    \text{{\small \rotatebox{90}{$=$}}} \vspace{-2mm}\\
    \text{{\small $j_{k+1}, \ldots , j_{n}$}}
    \end{array}
}
)
$$
Then, the corresponding part of the summand in $\sum_{ I \in \mathbb{N}^{n} } (...)$ can be expressed as follows.
\begin{equation*}
\begin{split}
&
\Psi_{i_{l}, i_{1}}
\det \left(\begin{array}{c:c}
Z_{i_{l}} & Z_{ ( \widehat{i_{1}}, i_{2}, \ldots , \widehat{i_{l}}, \ldots , i_{k+1}) } \\\hdashline
\Psi_{i_{l}}^{ ( \widehat{i_{1}}, i_{2}, \ldots , \widehat{i_{l}}, \ldots , i_{k+1}) }
& \Psi_{ ( \widehat{i_{1}}, i_{2}, \ldots , \widehat{i_{l}}, \ldots , i_{k+1}) }^{ ( \widehat{i_{1}}, i_{2}, \ldots , \widehat{i_{l}}, \ldots , i_{k+1}) }
\end{array}\right)
\cdot
Z_{i_{k+2}} \cdots Z_{i_{n}}
D_{i_{1}}^{*} D_{i_{k+2}}^{*} \cdots D_{i_{n}}^{*} \\
&=
\Psi_{j_{1}, j_{k+1}}
{\color{light-gray}
\underbrace{
    {\color{black}
    \det \left(\begin{array}{c:c}
    Z_{j_{1}} & Z_{ ( j_{2}, \ldots , j_{k}) } \\\hdashline
    \Psi_{j_{1}}^{ ( j_{2}, \ldots , j_{k}) }
    & \Psi_{ ( j_{2}, \ldots , j_{k}) }^{ ( j_{2}, \ldots , j_{k}) }
    \end{array}\right)
    }
}_{\text{{\scriptsize $\displaystyle
    \phantom{\det
    \left(\begin{array}{c}
    Z_{(j_{1}, j_{2}, \ldots , j_{k})} \\\hdashline
    \Psi_{(j_{1}, j_{2} \ldots , j_{k})}^{(j_{2}, \ldots , j_{k})}
    \end{array}\right)}
    =
    \det
    \left(\begin{array}{c}
    Z_{(j_{1}, j_{2}, \ldots , j_{k})} \\\hdashline
    \Psi_{(j_{1}, j_{2} \ldots , j_{k})}^{(j_{2}, \ldots , j_{k})}
    \end{array}\right)
$}}}
}
\cdot
Z_{j_{k+2}} \cdots Z_{j_{n}}
D_{j_{k+1}}^{*} D_{j_{k+2}}^{*} \cdots D_{j_{n}}^{*}
\end{split}
\end{equation*}
Using this expression, we obtain the following computation of $\sum_{ I \in \mathbb{N}^{n} } C_{I,k+1}$.
\begin{equation*}
\begin{split}
&
\sum_{ I \in \mathbb{N}^{n} } C_{I,k+1} \\
&= 
\sum_{I \in \mathbb{N}^{n}}
\det ( \Psi_{Z} \mid (i_{1}, \ldots , i_{k}) )
\cdot
Z_{i_{k+1}} Z_{i_{k+2}} \cdots Z_{i_{n}}
D_{i_{k+1}}^{*}
D_{i_{k+2}}^{*} \cdots D_{i_{n}}^{*} \\
&\hspace{10mm}-
\sum_{l=2}^{k+1}
{\color{light-gray}
\underbrace{
    {\color{black}
    \sum_{ J \in \mathbb{N}^{n} }
    \Psi_{j_{1}, j_{k+1}}
    \det
    \left(\begin{array}{c}
    Z_{(j_{1}, j_{2}, \ldots , j_{k})} \\\hdashline
    \Psi_{(j_{1}, j_{2} \ldots , j_{k})}^{(j_{2}, \ldots , j_{k})}
    \end{array}\right)
    \cdot
    Z_{j_{k+2}} \cdots Z_{j_{n}}
    D_{j_{k+1}}^{*} D_{j_{k+2}}^{*} \cdots D_{j_{n}}^{*}
    }
}_{
    \text{{\scriptsize doesn't depend on $l$}}
}
} \\
&= 
\sum_{I \in \mathbb{N}^{n}}
\det ( \Psi_{Z} \mid (i_{1}, \ldots , i_{k}) )
\cdot
Z_{i_{k+1}} Z_{i_{k+2}} \cdots Z_{i_{n}}
D_{i_{k+1}}^{*}
D_{i_{k+2}}^{*} \cdots D_{i_{n}}^{*}
-
\frac{k}{n-k}
    \sum_{I \in \mathbb{N}^{n}} B_{I,k}
\end{split}
\end{equation*}
We now consider the summand of $\sum_{k=1}^{n-1}(...)$ appearing as the second term on the far right-hand side of (\ref{eq:RHS}). Then the following holds.
\begin{equation*}
\begin{split}
&
\binom{n-1}{k} \sum_{I \in \mathbb{N}^{n}} C_{I,k+1}
+
\binom{n-1}{k-1} \sum_{I \in \mathbb{N}^{n}} B_{I,k}
+
\binom{n-1}{k-1} \sum_{I \in \mathbb{N}^{n}} A_{I,k}\\
&= 
\binom{n-1}{k}
\sum_{I \in \mathbb{N}^{n}}
\det ( \Psi \mid (i_{1}, \ldots , i_{k}) )
\cdot
Z_{i_{k+1}} Z_{i_{k+2}} \cdots Z_{i_{n}}
D_{i_{k+1}}^{*}
D_{i_{k+2}}^{*} \cdots D_{i_{n}}^{*} \\
&\hspace{10mm}
{\color{light-gray}
\underbrace{
    {\color{black}
    -
    \hspace{-5mm}
    {\color{light-gray}
    \underbrace{
        {\color{black}
        k \binom{n-1}{k}
        }
    }_{\text{{\scriptsize $
        \phantom{(n-k) \binom{n-1}{k-1}}
        =
        (n-k) \binom{n-1}{k-1}
    $}}}
    }
    \hspace{-8mm}
    \frac{1}{n-k}
    \sum_{I \in \mathbb{N}^{n}} B_{I,k}
    +
    \binom{n-1}{k-1}
    \sum_{I \in \mathbb{N}^{n}} B_{I,k}
    }
}_{\text{{\scriptsize $\displaystyle
    \phantom{0} = 0
$}}}
}\\
&\hspace{15mm}+
\binom{n-1}{k-1}
\sum_{i_{k+1}, \ldots , i_{n}}
{\color{light-gray}
\underbrace{
    {\color{black}
    \sum_{i_{1}, \ldots , i_{k}}
    [\,
    D_{i_{1}}^{*},
    \mathrm{det}
    \left(\begin{array}{c}
    Z_{ ( i_{1}, \ldots , i_{k} ) } \\\hdashline
    \Psi_{ (i_{1}, i_{2}, \ldots , i_{k}) }^{ (i_{2}, \ldots , i_{k}) }
    \end{array}\right)
    \,]
    }
}_{\text{{\scriptsize $\displaystyle
    \phantom{-\sum_{I \in \mathbb{N}^{k}} \det ( \Psi_{Z} \mid I )}
    \stackrel{\text{Lemma~\ref{Lem:adin}}}{=}
    - \sum_{I \in \mathbb{N}^{k}} \det ( \Psi_{Z} \mid I )
$}}}
}
Z_{i_{k+1}} \cdots Z_{i_{n}}
D_{i_{k+1}}^{*} \cdots D_{i_{n}}^{*} \\
&= 
\Bigg(
{\color{light-gray}
\underbrace{
    {\color{black}
    \binom{n-1}{k} + \binom{n-1}{k-1}
    }
}_{\text{{\scriptsize $
    \phantom{\binom{n}{k}}
    =
    \binom{n}{k}
$}}}
}
\Bigg)
\Bigg(
\sum_{I \in \mathbb{N}^{k}}
\det ( \Psi \mid I )
\Bigg)
\Bigg(
{\color{light-gray}
\underbrace{
    {\color{black}
    \sum_{i_{k+1}, \ldots , i_{n}}
    Z_{i_{k+1}} Z_{i_{k+2}} \cdots Z_{i_{n}}
    D_{i_{k+1}}^{*}
    D_{i_{k+2}}^{*} \cdots D_{i_{n}}^{*}
    }
}_{\text{{\scriptsize $\displaystyle
    \phantom{: (D_{Z}^{*})^{n-k} :}
    \;=\;
    : (D_{Z}^{*})^{n-k} :
$}}}
}
\Bigg)
\end{split}
\end{equation*}
Here, in the final step of the transformation, we used Lemma~\ref{Lem:adin} given below.

By substituting the above computations into (\ref{eq:RHS}), we obtain the following.
\begin{equation*}
\begin{split}
n! \cdot (\text{RHS of (\ref{eq:a_n})})
&\;=\;
n ! \cdot (\text{RHS of (\ref{eq:RHS})}) \\
&\;=\; 
:(D_{Z}^{*})^{n}:
+
\sum_{k=1}^{n-1}
\binom{n}{k}
\left(
    \sum_{I \in \mathbb{N}^{k}}
    \det ( \Psi_{Z} \mid I )
\right)
:(D_{Z}^{*})^{n-k}:
+ \sum_{I \in \mathbb{N}^{n}}
    \det ( \Psi_{Z} \mid I ) \\
&\;=\;
\sum_{k=1}^{n}
\binom{n}{k}
\left(
    \sum_{I \in \mathbb{N}^{k}}
    \det ( \Psi_{Z} \mid I )
\right)
:(D_{Z}^{*})^{n-k}:
\;=\;
n! \cdot a_{n}
\end{split}
\end{equation*}
\end{proof} 

To complete the proof of Lemma~\ref{Lem:an_decomp} above, it remains to show the following.

\medskip

\begin{Lem} 
\label{Lem:adin}
For any $k \in \mathbb{N}$, we have
\begin{equation*}
\sum _{i_{1},\cdots ,i_{k}}
\Big[
    D_{i_{1}}^{*},
    \det
    \left( \begin{array}{ccc}
    Z_{i_{1}} & \cdots & Z_{i_{k}} \\
    \Psi_{i_{2},i_{1}} & \cdots & \Psi_{i_{2},i_{k}} \\
    \vdots & \cdots & \vdots \\
    \Psi_{i_{k},i_{1}} & \cdots & \Psi_{i_{k},i_{k}}
    \end{array} \right)
\Big]
=
\sum _{I \in \mathbb{N}^{k}}
\mathrm{det} ( \Psi_{Z} \mid I ).
\end{equation*}
\end{Lem}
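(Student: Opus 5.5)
Since all entries of the determinant lie in the commutative algebra $R$, the determinant is an honest polynomial in commuting quantities. The plan is to use the fact that $\operatorname{ad}(D_{i}^{*}) = [D_{i}^{*},\,\cdot\,]$ restricts to a derivation of $R$. By condition~3 and the Leibniz rule for commutators, $[D_{i}^{*}, fg] = [D_{i}^{*}, f]\,g + f\,[D_{i}^{*}, g]$ with both brackets in $R$. We also have $[D_{i}^{*}, f] = -[D_{i}, f]$, as remarked after the axioms.

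\emph{Step 1: expand row by row.} Applying this derivation to the determinant (a multilinear function of its rows) gives, for fixed $(i_{1},\ldots,i_{k})$, a sum over $r=1,\ldots,k$ of determinants in which only the $r$-th row has been differentiated.

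\emph{Step 2: the first row.} Differentiating the first row replaces $Z_{i_{l}}$ by $[D_{i_{1}}^{*}, Z_{i_{l}}] = -\Psi_{i_{1},i_{l}}$. That row then becomes $-(\Psi_{i_{1},i_{1}},\ldots,\Psi_{i_{1},i_{k}})$, and the matrix is exactly $\Psi^{(i_{1},\ldots,i_{k})}_{(i_{1},\ldots,i_{k})}$ up to this factor. Summing over all indices, this term contributes $\mp\sum_{I\in\mathbb{N}^{k}}\det(\Psi_{Z}\mid I)$.

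A careful bookkeeping of signs is needed here. The direct computation gives the sign coming from $[D^{*},Z]=-\Psi$, which is the sign with which Lemma~\ref{Lem:adin} is invoked in the $1\le k\le n-1$ step of the proof of Lemma~\ref{Lem:an_decomp}. I would fix the sign convention consistently with that use.

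\emph{Step 3: the rows $r\ge 2$.} This is the expected main obstacle: one must show these terms vanish after summation. Differentiating row $r$ gives entries
\[
E_{l} = -[D_{i_{1}},[D_{i_{r}}, Z_{i_{l}}]].
\]
Since $[D_{i_{1}}, D_{i_{r}}]=0$, the Jacobi identity shows that $E_{l}$ is symmetric under $i_{1}\leftrightarrow i_{r}$. Now relabel the summation variables by swapping $i_{1}$ and $i_{r}$. Under this relabeling:
\begin{itemize}
\item the first row $(Z_{i_{l}})_{l}$ and every undifferentiated row $(\Psi_{i_{s},i_{l}})_{l}$, $s\ne 1,r$, just have columns $1$ and $r$ interchanged;
\item the differentiated row $r$ also has its columns $1$ and $r$ interchanged, by the symmetry of $E_{l}$.
\end{itemize}
Hence the relabeled matrix is the original one with two columns swapped, so its determinant changes sign. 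Because the sum over $(i_{1},\ldots,i_{k})\in\mathbb{N}^{k}$ is invariant under the relabeling, each such contribution equals its own negative and therefore vanishes. All sums are finite, since $Z_{j}=0$ for all but finitely many $j$ and hence the relevant columns vanish.

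\emph{Step 4: conclusion.} Combining Steps 2 and 3 leaves only the first-row contribution, namely the sum of the principal minors $\det(\Psi_{Z}\mid I)$ over $I\in\mathbb{N}^{k}$, with the sign determined in Step 2. This is the asserted identity. The case $k=1$ serves as a check, since there the identity reads $\sum_{i}[D_{i}^{*},Z_{i}]$ against $\sum_{i}\Psi_{i,i}$.
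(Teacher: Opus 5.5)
Your Steps 1 and 3 follow the paper's argument. You expand $[D_{i_1}^{*},\cdot\,]$ row by row as a derivation of $R$. You then kill the rows $r\ge 2$ using the Jacobi symmetry of the differentiated row together with a relabeling that acts as a column transposition. Your single swap $i_1\leftrightarrow i_r$ is a slightly more direct version of the paper's two-stage relabeling: the paper first uses $i_2\leftrightarrow i_l$ to move the differentiated row to position $2$, then $i_1\leftrightarrow i_2$. That part, including the finiteness remark, is fine.

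The one real gap is the sign in Steps 2 and 4. You leave it as $\mp$ and then call the result ``the asserted identity.''

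\begin{itemize}
\item \emph{Your reading gives the wrong sign.} With $\Psi_{i,j}=[D_i,Z_j]$, your Steps 1--3 prove that the left-hand side equals $-\sum_{I\in\mathbb{N}^k}\det(\Psi_Z\mid I)$. So under that reading you have not shown the displayed identity. Your own $k=1$ check refutes it whenever $\sum_i[D_i,Z_i]\neq 0$, since $\sum_i[D_i^{*},Z_i]=-\sum_i\Psi_{i,i}$.
\item \emph{The missing step is to commit to $\Psi_{i,j}=[D_i^{*},Z_j]$.} This is the reading under which the paper states and proves the lemma: its proof identifies the differentiated row $([D_{i_1}^{*},Z_{i_l}])_l$ with the first row of $\Psi_Z$.
\item \emph{It is the only reading consistent with the rest of the paper.} The $n=1$ case in the proof of Lemma~\ref{Lem:an_decomp} writes $\Psi_{i,i}=[D_i^{*},Z_i]$. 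Section~3 sets $\Psi_Z=([D_i^{*},Z_j])_{i,j}$. Comparing coefficients of $t$ in Theorem~\ref{Thm:main} forces $\sum_i D_i^{*}Z_i=\sum_i Z_iD_i^{*}+\mathrm{Tr}\,\Psi_Z$.
\item \emph{With that reading the proof closes.} The differentiated first row is literally $(\Psi_{i_1,i_l})_l$, so the sign is $+$. Step 3 is unchanged, because $[D_a^{*},[D_b^{*},Z_j]]$ is also symmetric in $a,b$.
\end{itemize}

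Your proposed remedy, matching the minus sign written in the $1\le k\le n-1$ step of the proof of Lemma~\ref{Lem:an_decomp}, goes the wrong way. That minus is a typo in the paper, for two reasons. The same proof invokes the lemma with a $+$ sign for $\sum_I A_{I,n}$. And the very next line merges the term with the $\binom{n-1}{k}$-term via $\binom{n-1}{k}+\binom{n-1}{k-1}=\binom{n}{k}$, which only works with $+$.
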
 
\begin{proof}[Proof of Lemma~\ref{Lem:adin}]
First, by the Leibniz rule for the commutator as a Lie bracket, the following identity holds.
\begin{equation*}
\begin{split}
&
\Big[
    D_{i_{1}}^{*},
    \det
    \left(\begin{array}{ccc}
    Z_{i_{1}} & \cdots & Z_{i_{k}} \\
    \Psi_{i_{2},i_{1}} & \cdots & \Psi_{i_{2},i_{k}} \\
    \vdots & \cdots & \vdots \\
    \Psi_{i_{k},i_{1}} & \cdots & \Psi_{i_{k},i_{k}}
    \end{array}\right)
\Big] \\
&=
{\color{light-gray}
\underbrace{
    {\color{black}
    \det
    \left(\begin{array}{ccc}
    [\, D_{i_{1}}^{*}, Z_{i_{1}} \,]
    & \cdots
    & [\, D_{i_{1}}^{*}, Z_{i_{k}} \,] \\
    \Psi_{i_{2},i_{1}} & \cdots & \Psi_{i_{2},i_{k}} \\
    \vdots & \cdots & \vdots \\
    \Psi_{i_{k},i_{1}} & \cdots & \Psi_{i_{k},i_{k}}
    \end{array}\right)
    }
}_{\text{{\scriptsize $\displaystyle
    \phantom{
        \det ( \Psi_{Z} \mid (i_{1}, \cdots, i_{k}) )
    }
    =
    \det ( \Psi_{Z} \mid (i_{1}, \cdots, i_{k}) )
$}}}
}
+ 
\sum_{l=2}^{k}
\det
\left(\begin{array}{ccc}
Z_{i_{1}} & \cdots & Z_{i_{l}} \\
\Psi_{i_{2},i_{1}} & \cdots & \Psi_{i_{2},i_{k}} \\
\vdots & \cdots & \vdots \\
{[\, D_{i_{1}}^{*}, \Psi_{i_{l},i_{1}} \,]}
& \cdots
& {[\, D_{i_{1}}^{*}, \Psi_{i_{l},i_{k}} \,]} \\
\vdots & \cdots & \vdots \\
\Psi_{i_{k},i_{1}} & \cdots & \Psi_{i_{k},i_{k}} 
\end{array}\right) .
\end{split}
\end{equation*}
Therefore, it suffices to show that the second term on the right-hand side vanishes after summing over $i_{1}, i_{2}, \ldots , i_{k}$.

To this end, for each summand in the second term $\sum_{l=2}^{k}(...)$, we fix $l=2,\cdots ,k$ arbittrarily, and consider the sum taken in particular over $i_{1}$ and $i_{l}$. We note that $[D_{i}, R] \subset R$, and that, by a change of variables, the following identity holds.
\begin{equation*}
\sum_{i_{1},i_{l}}
\det
\left(\begin{array}{ccc}
Z_{i_{2}} & \cdots & Z_{i_{l}} \\
\Psi_{i_{2},i_{1}} & \cdots & \Psi_{i_{2},i_{k}} \\
\vdots & \cdots & \vdots \\
{[\, D_{i_{1}}^{*}, \Psi_{i_{l},i_{1}} \,]}
& \cdots
& {[\, D_{i_{1}}^{*}, \Psi_{i_{l},i_{k}} \,]} \\
\vdots & \cdots & \vdots \\
\Psi_{i_{k},i_{1}} & \cdots & \Psi_{i_{k},i_{k}} \\
\end{array}\right)
=
\sum _{i_{1},i_{l}}
\det
\left(\begin{array}{ccc}
Z_{i_{1}} & \cdots & Z_{i_{k}} \\
{[\, D_{i_{1}}^{*}, \Psi_{i_{2},i_{1}} \,]}
& \cdots
& {[\, D_{i_{1}}^{*}, \Psi_{i_{2},i_{k}} \,]} \\
\Psi_{i_{3},i_{1}} & \cdots & \Psi_{i_{3},i_{k}} \\
\vdots & \cdots & \vdots \\
\Psi_{i_{k},i_{1}} & \cdots & \Psi_{i_{k},i_{k}}
\end{array}\right)
\end{equation*}
Therefore, it suffices to show the following for the expression obtained by summing this right-hand side over the remaining variables $
\widehat{i_{1}}, i_{2}, \ldots , \widehat{i_{l}}, \ldots , i_{k}
$.
\begin{equation}
\label{eq:lem_key}
\sum _{i_{1},\cdots ,i_{k}}
\det
\left(\begin{array}{ccc}
Z_{i_{1}} & \cdots & Z_{i_{k}} \\
{[\, D_{i_{1}}^{*}, \Psi_{i_{2},i_{1}} \,]}
& \cdots
& {[\, D_{i_{1}}^{*}, \Psi_{i_{2},i_{k}} \,]} \\
\Psi_{i_{3},i_{1}}
& \cdots
& \Psi_{i_{3},i_{k}} \\
\vdots & \cdots & \vdots \\
\Psi_{i_{k},i_{1}} 
& \cdots
& \Psi_{i_{k},i_{k}}
\end{array} \right)
= 0.
\end{equation}

By the Jacobi identity for the commutator, the following holds.
\begin{equation*}
\begin{split}
[\, D_{i}^{*}, \Psi_{j,k} \,]
&= [\, D_{i}^{*}, [\, D_{j}^{*}, Z_{k} \,] \,] \\
&=
- [\, D_{j}^{*},
\hspace{-8mm}
{\color{light-gray}
\underbrace{
    {\color{black}
    [\, Z_{k}, D_{i}^{*} \,]
    }
}_{\text{{\scriptsize $
    \phantom{- [\, D_{i}^{*}, Z_{k} \,]}
    =
    - [\, D_{i}, Z_{k} \,]
$}}}
}
\hspace{-8mm}
\,]
- [\, Z_{k},
{\color{light-gray}
\underbrace{
    {\color{black}
    [\, D_{i}^{*}, D_{j}^{*} \,]
    }
}_{\text{{\scriptsize $
    \phantom{0} = 0
$}}}
}
\,]
=
[\, D_{j}^{*}, [\, D_{i}^{*}, Z_{k} \,] \,]
=
[\, D_{j}^{*}, \Psi_{i,k} \,]
\end{split}
\end{equation*}
From this, the following holds for each summand on the left-hand side of (\ref{eq:lem_key}).
\begin{equation}
\label{eq:smd}
\det
\left(\begin{array}{ccc}
Z_{i_{1}} & \cdots & Z_{i_{k}} \\
{[\, D_{i_{1}}^{*}, \Psi_{i_{2},i_{1}} \,]}
& \cdots
& {[\, D_{i_{1}}^{*}, \Psi_{i_{2},i_{k}} \,]} \\
\Psi_{i_{3},i_{1}} & \cdots & \Psi_{i_{3},i_{k}} \\
\vdots & \cdots & \vdots \\
\Psi_{i_{k},i_{1}} & \cdots & \Psi_{i_{k},i_{k}}
\end{array}\right)
=
\det
\left(\begin{array}{ccc}
Z_{i_{1}} & \cdots & Z_{i_{k}} \\
{[\, D_{i_{2}}^{*}, \Psi_{i_{1},i_{1}} \,]}
& \cdots
& {[\, D_{i_{2}}^{*}, \Psi_{i_{1},i_{k}} \,]} \\
\Psi_{i_{3},i_{1}} & \cdots & \Psi_{i_{3},i_{k}} \\
\vdots & \cdots & \vdots \\
\Psi_{i_{k},i_{1}} & \cdots & \Psi_{i_{k},i_{k}}
\end{array}\right)
\end{equation}
Also, by interchanging $i_{1}$ and $i_{2}$, we obtain the following.
\begin{equation*}
\begin{split}
&
\left(\begin{array}{c}
\text{The expression obtained by interchanging} \\
\text{the roles of $i_{1}$ and $i_{2}$ on LHS of (\ref{eq:smd})}
\end{array}\right) \\
&=
\det
\left(\begin{array}{ccccc}
Z_{i_{2}} & Z_{i_{1}} & Z_{i_{3}} & \cdots & Z_{i_{k}} \\
{[\, D_{i_{2}}^{*}, \Psi_{i_{1},i_{2}} \,]}
& {[\, D_{i_{2}}^{*}, \Psi_{i_{1},i_{1}} \,]}
& {[\, D_{i_{2}}^{*}, \Psi_{i_{1},i_{3}} \,]}
& \cdots
& {[\, D_{i_{2}}^{*}, \Psi_{i_{1},i_{k}} \,]} \\
\Psi_{i_{3},i_{2}} & \Psi_{i_{3},i_{1}} & \Psi_{i_{3},i_{3}} & \cdots & \Psi_{i_{3},i_{k}} \\
\vdots & \vdots & \vdots & \cdots & \vdots \\
\Psi_{i_{k},i_{2}} & \Psi_{i_{k},i_{1}} & \Psi_{i_{k},i_{3}} & \cdots & \Psi_{i_{k},i_{k}}
\end{array}\right) \\
&=
-
\det
\left(\begin{array}{ccc}
Z_{i_{1}} &  \cdots & Z_{i_{k}} \\
{[\, D_{i_{2}}^{*}, \Psi_{i_{1},i_{1}} \,]}
& \cdots
& {[\, D_{i_{2}}^{*}, \Psi_{i_{1},i_{k}} \,]} \\
\Psi_{i_{3},i_{1}} & \cdots & \Psi_{i_{3},i_{k}} \\
\vdots & \cdots & \vdots \\
\Psi_{i_{k},i_{1}} & \cdots & \Psi_{i_{k},i_{k}}
\end{array}\right)
=
(\text{RHS of (\ref{eq:smd})})
\end{split}
\end{equation*}
From the above, we have
$
\sum_{i_{1}, i_{2}, \ldots , i_{k}} (\text{LHS of (\ref{eq:smd})})
=
- \sum_{i_{1}, i_{2}, \ldots , i_{k}} (\text{LHS of (\ref{eq:smd})})
$
which implies (\ref{eq:lem_key}).
\end{proof} 

\begin{proof}[Proof of Lemma~\ref{Lem:cn_decomp}]

Applying the cofactor expansion along the first row to the summand in $\sum_{i_{1}, i_{2}, \ldots , i_{n+1}}(...)$, which appears in the definition of $c_{n}$, and arranging the terms using the same technique as in the proof of Lemmas~\ref{Lem:an_decomp}, we obtain the following.
\begin{equation*}
\begin{split}
&
\sum_{i_{1}, \cdots, i_{n+1}}
D_{i_{1}}^{*}
\mathrm{det}
\left(\begin{array}{ccc}
Z_{i_{1}} & \cdots & Z_{i_{n+1}} \\
\Psi_{i_{2},i_{1}} & \cdots & \Psi_{i_{2},i_{n+1}} \\
\vdots & \cdots & \vdots \\
\Psi_{i_{n+1},i_{1}} & \cdots & \Psi_{i_{n+1},i_{n+1}}
\end{array}\right)\\
&=
{\color{light-gray}
\underbrace{
    {\color{black}
    \sum_{i_{1}} D_{i_{1}}^{*} Z_{i_{1}}
    }
}_{\text{{\scriptsize $\displaystyle
    \phantom{\mathrm{Tr}( \Phi_{Z} )}
    =
    \mathrm{Tr}( \Phi_{Z} )
$}}}
}
\sum_{I \in \mathbb{N}^{n} }
\det ( \Psi_{Z} \mid I )
-
n
\underbrace{
    \sum_{i_{1},\cdots ,i_{n+1}}
    \Phi_{i_{1}i_{2}}
    \mathrm{det}
    \Psi_{(i_{1}, \widehat{i_{2}}, i_{3}, \ldots , i_{n+1})}^{ (i_{2}, i_{3}, \ldots , i_{n+1}) }
}_{\text{{\small $\displaystyle
    \phantom{X_{2}} = X_{2}
$}}}.
\end{split}
\end{equation*}
Here, we set $X_{2}$ as above. Applying the cofactor expansion along the first row to the determinant appearing $X_{2}$, and arranging the terms using the same technique as in the proof of Lemmas~\ref{Lem:an_decomp}, we obtain the following transformation.
\begin{equation*}
\begin{split}
X_{2}
&=
\sum_{i_{1},\cdots ,i_{n+1}}
\Phi_{i_{1}i_{2}}
\mathrm{det}
\Psi_{(i_{1}, i_{3}, \ldots , i_{n+1})}^{ (i_{2}, i_{3},  \ldots , i_{n+1}) } \\
&=
{\color{light-gray}
\underbrace{
    {\color{black}
    \sum_{i_{1}, i_{2}}
    \Psi_{i_{1}, i_{2}} \Psi_{i_{2}, i_{1}}
    }
}_{\text{{\scriptsize $\displaystyle
    \phantom{\mathrm{Tr}( \Phi_{Z} \Psi_{Z} )}
    =
    \mathrm{Tr}( \Phi_{Z} \Psi_{Z} )
$}}}
}
\sum_{I \in \mathbb{N}^{n-1}}
\det ( \Psi_{Z} \mid I )
-
(n-1)
\underbrace{
    \sum_{i_{1},\cdots ,i_{n+1}}
    \Phi_{i_{1},i_{2}}
    \Psi_{i_{2},i_{3}}
    \mathrm{det}
    \Psi_{(i_{1}, i_{4}, \ldots , i_{n+1})}^{ (i_{3}, i_{4}, \ldots , i_{n+1}) }
}_{\text{{\small $\displaystyle
    \phantom{X_{3}} = X_{3}
$}}}
\end{split}
\end{equation*}
Here, we define $X_{3}$ as above. Similarly, for $k=2,3,\ldots , n$, we define $X_{k}$ by
\begin{equation*}
X_{k}
=
\sum_{i_{1},\cdots ,i_{n+1}}
\Phi_{i_{1},i_{2}}
\Psi_{i_{2},i_{3}}
\cdots
\Psi_{i_{k-1},i_{k}}
{\color{light-gray}
\underbrace{
    {\color{black}
    \mathrm{det}
    \left( \begin{array}{cccc}
    \Psi_{i_{k},i_{1}} & \Psi_{i_{k},i_{k+1}} & \cdots & \Psi_{i_{k},i_{n+1}} \\
    \Psi_{i_{k+1},i_{1}} & \Psi_{i_{k+1},i_{k+1}} & \cdots & \Psi_{i_{k+1},i_{n+1}} \\
    \vdots & \vdots & \cdots & \vdots \\
    \Psi_{i_{n+1},i_{1}} & \Psi_{i_{n+1},i_{k+1}} & \cdots & \Psi_{i_{n+1},i_{n+1}}
    \end{array} \right)
    }
}_{\text{{\scriptsize $\displaystyle
    \phantom{ \det \Psi_{ ( i_{1}, i_{k+1}, \ldots , i_{n+1} ) }^{ ( i_{k}, i_{k+1}, \ldots , i_{n+1} ) } }
    =
    \det \Psi_{ ( i_{1}, i_{k+1}, \ldots , i_{n+1} ) }^{ ( i_{k}, i_{k+1}, \ldots , i_{n+1} ) }
$}}}
}.
\end{equation*}
Then, it follows that the family $\{ X_{k} \}$ satisfies the following recurrence relation.
\begin{equation}
\label{X_k}
X_{k}
=
\mathrm{Tr}( \Phi_{Z} \Psi_{Z}^{k-1})
\sum_{I \in \mathbb{N}^{n+1-k} }
\det ( \Psi_{Z} \mid I )
-
(n-k) X_{k+1}
\end{equation}
The terminal condition is given by the following computation.
\begin{equation*}
\begin{split}
X_{n}
&=
\sum_{i_{1},\cdots ,i_{n+1}}
\Phi_{i_{1},i_{2}}
\Psi_{i_{2},i_{3}}
\cdots
\Psi_{i_{n-1},i_{n}}
{\color{light-gray}
\underbrace{
    {\color{black}
    \mathrm{det}
    \left(\begin{array}{cc}
    \Psi_{i_{n},i_{1}} & \Psi_{i_{n},i_{n+1}} \\
    \Psi_{i_{n+1},i_{1}} & \Psi_{i_{n+1},i_{n+1}}
    \end{array} \right)
    }
}_{\text{{\scriptsize $\displaystyle
    \begin{array}{c}
    \text{$
    \phantom{\Psi_{i_{n},i_{1}} \Psi_{i_{n+1},i_{n+1}}}
    =
    \Psi_{i_{n},i_{1}} \Psi_{i_{n+1},i_{n+1}}
    $} \\
    \text{$
    \phantom{aa-\Psi_{i_{n},i_{n+1}} \Psi_{i_{n+1},i_{1}}}
    -
    \Psi_{i_{n},i_{n+1}} \Psi_{i_{n+1},i_{1}}
    $}
    \end{array}
$}}}
} \\
&=
{\color{light-gray}
\underbrace{
    {\color{black}
    \sum_{i_{1},\cdots ,i_{n+1}}
    \Phi_{i_{1},i_{2}}
    \Psi_{i_{2},i_{3}}
    \cdots
    \Psi_{i_{n-1},i_{n}}
    \Psi_{i_{n},i_{1}}
    \Psi_{i_{n+1},i_{n+1}}
    }
}_{\text{{\scriptsize $
    \phantom{ \mathrm{Tr}( \Phi_{Z} \Psi_{Z}^{n-1} )
    \mathrm{Tr}( \Psi_{Z} ) }
    =
    \mathrm{Tr}( \Phi_{Z} \Psi_{Z}^{n-1} )
    \mathrm{Tr}( \Psi_{Z} )
$}}}
}
-
{\color{light-gray}
\underbrace{
    {\color{black}
    \sum_{i_{1},\cdots ,i_{n+1}}
    \Phi_{i_{1},i_{2}}
    \Psi_{i_{2},i_{3}}
    \cdots
    \Psi_{i_{n-1},i_{n}}
    \Psi_{i_{n},i_{n+1}}
    \Psi_{i_{n+1},i_{1}}
    }
}_{\text{{\scriptsize $\displaystyle
    \phantom{ \mathrm{Tr}( \Phi_{Z} \Psi_{Z}^{n} ) }
    =
    \mathrm{Tr}( \Phi_{Z} \Psi_{Z}^{n} )
$}}}
} \\
&=
\mathrm{Tr}( \Phi_{Z} \Psi_{Z}^{n-1} )
\hspace{-14mm}
{\color{light-gray}
\underbrace{
    {\color{black}
    \mathrm{Tr}( \Psi_{Z} )
    }
}_{\text{{\scriptsize $\displaystyle
    \phantom{
        \sum_{i \in \mathbb{N}} \det ( \Psi_{Z} \mid i )
    }
    =
    \sum_{i \in \mathbb{N}}
    \det ( \Psi_{Z} \mid i )
$}}}
}
\hspace{-14mm}
-
\mathrm{Tr}( \Phi_{Z} \Psi_{Z}^{n} ) .
\end{split}
\end{equation*}

Thus, we obtain the following.
\begin{equation*}
\begin{split}
&
\sum_{i_{1}, \cdots, i_{n+1}}
D_{i_{1}}^{*}
\mathrm{det}
\left(\begin{array}{ccc}
Z_{i_{1}} & \cdots & Z_{i_{n+1}} \\
\Psi_{i_{2},i_{1}} & \cdots & \Psi_{i_{2},i_{n+1}} \\
\vdots & \cdots & \vdots \\
\Psi_{i_{n+1},i_{1}} & \cdots & \Psi_{i_{n+1},i_{n+1}}
\end{array}\right) \\
&=
\mathrm{Tr}( \Phi_{Z} )
\sum_{I \in \mathbb{N}^{n} }
\det ( \Psi_{Z} \mid I )
-
n \cdot
\hspace{-57mm}
{\color{light-gray}
\underbrace{
    {\color{black}
    X_{2}
    }
}_{\text{{\scriptsize $\displaystyle
    \phantom{
        \mathrm{Tr} ( \Phi_{Z} \Psi_{Z} ) \sum_{I \in \mathbb{N}^{n-1} } \det ( \Psi_{Z} \mid I ) - (n-1) X_{3}
    }
    =
    \mathrm{Tr} ( \Phi_{Z} \Psi_{Z} )
    \sum_{I \in \mathbb{N}^{n-1} } \det ( \Psi_{Z} \mid I )
    -
    (n-1) X_{3}
$}}}
} \\
&=
\mathrm{Tr}( \Phi_{Z} )
\sum_{I \in \mathbb{N}^{n} }
\det ( \Psi_{Z} \mid I ) 
-
n
\cdot
\mathrm{Tr} ( \Phi_{Z} \Psi_{Z} )
\sum_{I \in \mathbb{N}^{n-1} } \det ( \Psi_{Z} \mid I ) \\
&\hspace{25mm}+
n (n-1) \cdot
\hspace{-57mm}
{\color{light-gray}
\underbrace{
    {\color{black}
    X_{3}
    }
}_{\text{{\scriptsize $\displaystyle
    \phantom{
        \mathrm{Tr} ( \Phi_{Z} \Psi_{Z}^{2} ) \sum_{I \in \mathbb{N}^{n-2} } \det ( \Psi_{Z} \mid I ) - (n-2) X_{4}
    }
    =
    \mathrm{Tr} ( \Phi_{Z} \Psi_{Z}^{2} )
    \sum_{I \in \mathbb{N}^{n-2} } \det ( \Psi_{Z} \mid I )
    -
    (n-2) X_{4}
$}}}
} \\
&= \cdots \\
&=
\sum_{k=0}^{n-2}
\mathrm{Tr}( \Phi_{Z} (-\Psi_{Z})^{k} )
\frac{ n! }{ (n-k)! }
\sum_{I \in \mathbb{N}^{n-k} }
\det ( \Psi_{Z} \mid I ) \\
&\hspace{20mm}
+
(-1)^{n-1} \cdot
n (n-1) \cdots 2 \cdot
\hspace{-57mm}
{\color{light-gray}
\underbrace{
    {\color{black}
    X_{n}
    }
}_{\text{{\scriptsize $\displaystyle
    \phantom{
       \mathrm{Tr} ( \Phi_{Z} \Psi_{Z}^{n-1} ) \sum_{i \in \mathbb{N}} \det ( \Psi_{Z} \mid i ) - \mathrm{Tr} ( \Phi_{Z} \Psi_{Z}^{n} )
    }
    =
    \mathrm{Tr} ( \Phi_{Z} \Psi_{Z}^{n-1} )
    \sum_{i \in \mathbb{N}}
    \det ( \Psi_{Z} \mid i )
    -
    \mathrm{Tr} ( \Phi_{Z} \Psi_{Z}^{n} )
$}}}
} \\
&=
\sum_{k=0}^{n}
\mathrm{Tr}( \Phi_{Z} (-\Psi_{Z})^{k} )
\frac{ n! }{ (n-k)! }
\sum_{I \in \mathbb{N}^{n-k} }
\det ( \Psi_{Z} \mid I )
\end{split}
\end{equation*}
Therefore, the claim is proved.
\end{proof} 

\subsection{Proof of Theorem \ref{Thm:MB}}

Throughout this section, we assume that $D_{i}(1)=0$ for all $i$. In this case, as a multiplication operator, we have
\begin{equation}
\label{eq:1st_chaos}
D_{i} + D_{i}^{*} = D_{i}^{*}.1.
\end{equation}
Indeed, this follows from
$
D_{i} + D_{i}^{*}
= (D_{i} + D_{i}^{*}).1
= D_{i}.1 + D_{i}^{*}.1
= D_{i}^{*}.1
$.

We begin by establishing the following property of $T_{Z}(t)$, which is characteristic of pull-back type operators.

\medskip

\begin{Lem} 
\label{Lem:LEI}
We have
$
T_{Z}(t).(fg)
=
( T_{Z}(t).f ) \cdot ( T_{Z}(t).g )
$
for $f,g \in R$.
\end{Lem}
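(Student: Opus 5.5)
We need to prove that $T_Z(t)$ is multiplicative: $T_Z(t).(fg) = (T_Z(t).f)(T_Z(t).g)$ in $R[\![t]\!]$. The plan is a Leibniz-rule argument, comparing coefficients of $t^n/n!$ on both sides. The key input is condition 3: $[D_i,R]\subset R$, so for $f\in R$ the operator $D_i$ acts as a derivation on $R$. Indeed, $D_i(fg) = [D_i,f].g + f\,D_i(g)$, and $[D_i,f]$ is a multiplication operator equal to $[D_i,f].1 = D_i(f) - f\,D_i(1)$. With the standing assumption $D_i(1)=0$, this gives $[D_i,f] = D_i(f)$, hence
\[
D_i(fg) = D_i(f)\,g + f\,D_i(g).
\]
So each $D_i$ is a derivation of $R$, and the $D_i$ commute with each other by condition 1.

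Next we write the coefficient of $t^n/n!$ in $T_Z(t)$ as
\[
\sum_{i_1,\dots,i_n} Z_{i_1}\cdots Z_{i_n} D_{i_1}\cdots D_{i_n}.
\]
Iterating the Leibniz rule, $D_{i_1}\cdots D_{i_n}(fg)$ equals the sum over subsets $S\subset\{1,\dots,n\}$ of $\bigl(\prod_{a\in S}D_{i_a}\bigr)(f)\cdot\bigl(\prod_{a\notin S}D_{i_a}\bigr)(g)$. Here the order inside each product is irrelevant because the $D_i$ commute. We then multiply by $Z_{i_1}\cdots Z_{i_n}$ and sum over all indices. Since the sum is symmetric under relabelling, a subset $S$ of size $k$ contributes
\[
\Bigl(\sum_{i_1,\dots,i_k} Z_{i_1}\cdots Z_{i_k} D_{i_1}\cdots D_{i_k} f\Bigr)\Bigl(\sum_{j_1,\dots,j_{n-k}} Z_{j_1}\cdots Z_{j_{n-k}} D_{j_1}\cdots D_{j_{n-k}} g\Bigr),
\]
and there are $\binom nk$ such subsets. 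Note that the $Z$'s are scalars in the commutative ring $R$ and stand to the left of all $D$'s, so they can be regrouped freely.

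Dividing by $n!$, the coefficient of $t^n$ on the left is $\sum_k \frac{1}{k!(n-k)!}(\cdots f)(\cdots g)$, which is exactly the Cauchy product of the two series $T_Z(t).f$ and $T_Z(t).g$. This proves the identity.

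The main subtlety is the first step, namely that $D_i$ is a genuine derivation of $R$ as an action on elements. This is where the hypothesis $D_i1=0$ of this section is used. Without it, $D_i$ is only a first-order operator, $D_i(fg) = D_i(f)g + fD_i(g) - fg\,D_i(1)$, and the multiplicativity would fail. Everything after that step is routine combinatorics.
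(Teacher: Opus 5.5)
Your proof is correct and follows the paper's argument. The paper likewise derives the Leibniz rule $D_i(fg)=D_i(f)\,g+f\,D_i(g)$ from $[D_i,R]\subset R$ and $D_i1=0$, expands $D_{i_1}\cdots D_{i_n}(fg)$ binomially, and regroups the result as the Cauchy product of $T_Z(t).f$ and $T_Z(t).g$; your explicit sum over subsets $S$, justified by the commutativity of the $D_i$ and the symmetry of the weights $Z_{i_1}\cdots Z_{i_n}$, is a slightly more careful version of the binomial step that the paper writes directly inside the index sum.
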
 
\begin{proof} 
We first note that 
$
(D_{i}).(fg)
=
( [\, D_{i}, f \,] g ).1
+
( f [\, D_{i}, g \,] ).1
$
for $f$, $g \in R$ by using $D_i (1) = 0$.
In particular, we have $[D_{i}, f].1 = D_{i}.f \in R$.
Thus it holds
\begin{equation*}
\begin{split}
&
T_{Z}(t).(fg)
=
\sum_{n=0}^{\infty}
\frac{ t^{n} }{ n! }
\sum_{i_{1}, \cdots, i_{n}}
Z_{i_{1}} \cdots Z_{i_{n}}
( D_{i_{1}} \cdots D_{i_{n}} ).( fg ) \\
&=
\sum_{n=0}^{\infty}
\frac{ t^{n} }{ n! }
\sum_{i_{1}, \cdots, i_{n}}
Z_{i_{1}} \cdots Z_{i_{n}}
\sum_{k=0}^{n}
\binom{n}{k}
( D_{i_{1}} \cdots D_{i_{k}} ).(f)
\cdot
( D_{i_{k+1}} \cdots D_{i_{n}} ).(g) \\
&=
\sum_{n=0}^{\infty}
\frac{ t^{n} }{ n! }
\sum_{k=0}^{n}
\binom{n}{k}
\Bigg(
{\color{light-gray}
\underbrace{
    {\color{black}
    :
    \sum_{i_{1},\ldots , i_{k}}
    Z_{i_{1}} \cdots Z_{i_{n}}
    D_{i_{1}} \cdots D_{i_{n}}
    :
    }
}_{\text{{\scriptsize $
    \phantom{:(D_{Z})^{k}:}
    \;=\;
    :(D_{Z})^{k}:
$}}}
}
\Bigg).(f) \\
&\hspace{50mm}
\cdot
\Bigg(
{\color{light-gray}
\underbrace{
    {\color{black}
    :
    \sum_{i_{k+1},\ldots , i_{n}}
    Z_{i_{k+1}} \cdots Z_{i_{n}}
    D_{i_{k+1}} \cdots D_{i_{n}}
    :
    }
}_{\text{{\scriptsize $
    \phantom{:(D_{Z})^{n-k}:}
    \;=\;
    :(D_{Z})^{n-k}:
$}}}
}
\Bigg).(g) \\
&=
( T_{Z}(t).f )
\cdot
( T_{Z}(t).g ) .
\end{split}
\end{equation*}
\end{proof} 

Now, we are ready to prove Theorem~\ref{Thm:MB}.

\medskip

\begin{proof}[Proof of Theorem~\ref{Thm:MB}]
We write
$\displaystyle
e_{t}
=
\exp \left(
    \int_{0}^{t}
    : \mathrm{e}^{-t^{\prime} D_{Z}} D_{Z}^{*}:(1)
    \,\mathrm{d}t^{\prime}
\right)
\in R[\![t]\!]
$
and denote by $e_{t}^{(n)}$ its $n$-th derivative.
For $n = 0,1,\ldots$ and $( i_{1}, \cdots, i_{n} ) \in \mathbb{N}^{n}$, we set
\begin{equation*}
P_{n}^{i_{1}, \ldots, i_{n}}
\;=\;
: (D_{i_1}+D_{i_1}^{*}) \cdots (D_{i_{n}}+D_{i_{n}}^{*}) :(1).
\end{equation*}
Then we have
$\displaystyle
\mathcal{E}_{Z}(t).1
=
\sum_{n=0}^{\infty} \frac{t^{n}}{n!}
\sum_{(i_{1}, \ldots , i_{n}) \in \mathbb{N}^{n}}
Z_{i_{1}} \cdots Z_{i_{n}}
P_{n}^{i_{1}, \ldots , i_{n}}
$.

First, for each $n \in \mathbb{N}$ and $(i_{1}, i_{2}, \ldots , i_{n}) \in \mathbb{N}^{n}$, we show that the following identity holds:
\begin{equation}
\label{eq:inductive}
\sum_{i_{1}, \ldots, i_{n}}
[\,
(
    Z_{i_{1}} \cdots Z_{i_{n}}
    :\mathrm{e}^{-tD_{Z}}:
).
( P_{n}^{i_{1}, \ldots, i_{n}})
\,]
\cdot
e_{t}
=
e_{t}^{(n)}.
\end{equation}
Then, by setting $t=0$, we obtain
$\displaystyle
\left.
\frac{\mathrm{d}^{n}}{\mathrm{d}t^{n}}
\right\vert_{t=0}
\mathcal{E}_{Z}(t).1
=
\sum_{(i_{1}, \ldots , i_{n}) \in \mathbb{N}^{n}}
Z_{i_{1}} \cdots Z_{i_{n}}
P_{n}^{i_{1}, \ldots , i_{n}}
=
e_{0}^{(n)}
$,
which proves Theorem~\ref{Thm:MB}.

In the case $n=1$, by the assumption $D_{i}(1) = 0$, we have
$
P_{1}^{i}
\;=\; : D_{i} + D_{i}^{*} : (1)
= ( D_{i} + D_{i}^{*} ) (1)
= D_{i}^{*}.1
$,
and hence the identity follows from the following computation:
\begin{equation*}
\sum_{i \in \mathbb{N}}
[\,
( Z_{i} :\mathrm{e}^{-tD_{Z}}: ).(
\hspace{-5mm}
{\color{light-gray}
\underbrace{
    {\color{black}
    P_{1}^{i}
    }
}_{\text{{\scriptsize $\displaystyle
    \phantom{D_{i}^{*}.1}
    =
    D_{i}^{*}.1
$}}}
}
\hspace{-5mm}
)
\,]
\cdot
e_{t}
=
{\color{light-gray}
\underbrace{
    {\color{black}
    \sum_{i \in \mathbb{N}}
    [\,(
    \hspace{-8mm}
    {\color{light-gray}
    \underbrace{
        {\color{black}
        Z_{i} :\mathrm{e}^{-tD_{Z}}: D_{i}^{*}
        }
    }_{\text{{\scriptsize $\displaystyle
        \phantom{: Z_{i} \mathrm{e}^{-tD_{Z}} D_{i}^{*} :}
        \;=\;
        : Z_{i} \mathrm{e}^{-tD_{Z}} D_{i}^{*} :
    $}}}
    }
    \hspace{-8mm}
    ).1
    \,]
    }
}_{\text{{\scriptsize $\displaystyle
    \phantom{: \mathrm{e}^{-tD_{Z}} D_{Z}^{*} :(1)}
    \;=\;
    : \mathrm{e}^{-tD_{Z}} D_{Z}^{*} :(1)
$}}}
}
\hspace{-2mm}
\cdot e_{t}
=
[\, :\mathrm{e}^{ -t D_{Z} } D_{Z}^{*} : (1) \,]
\cdot
e_{t}
=
e_{t}^{(1)}.
\end{equation*}

Now fix $n \in \mathbb{N}$ and assume that (\ref{eq:inductive}) holds for $n$. Then the following computation yields the result.
\begin{equation}
\label{eq:DRV}
\begin{split}
&
e_{t}^{(n+1)}
=
( e_{t}^{(n)} )^{\prime}
=
\hspace{-25mm}
{\color{light-gray}
\underbrace{
    {\color{black}
    \Bigg(
        \sum_{i_{1}, \ldots, i_{n}}
        (\,
            Z_{i_{1}} \cdots Z_{i_{n}}
            : \mathrm{e}^{-t D_{Z}} :
        \,).
        ( P_{n}^{i_{1}, \ldots , i_{n}} )
    \Bigg)^{\prime}
    }
}_{\text{{\scriptsize $\displaystyle
    \begin{array}{c}
    \displaystyle
    \phantom{MMMMMMMMMMM}
    =
    \sum_{i_{1}, \ldots, i_{n}, i_{n+1}}
    (\,
        Z_{i_{1}} \cdots Z_{i_{n}}
        : Z_{i_{n+1}} (-D_{i_{n+1}}) \mathrm{e}^{-t D_{Z}} :
    \,).
    ( P_{n}^{i_{1}, \ldots, i_{n}} ) \\
    \displaystyle
    \phantom{MMMMMMMMMMM}
    =
    \sum_{i_{1}, \ldots, i_{n}, i_{n+1}}
    (\,
        Z_{i_{1}} \cdots Z_{i_{n}} Z_{i_{n+1}}
        : \mathrm{e}^{-t D_{Z}} :
    \,).
    ( -D_{i_{n+1}}.P_{n}^{i_{1}, \ldots , i_{n}} )
    \end{array}
$}}}
}
\hspace{-25mm}
\cdot e_{t} \\
&\hspace{35mm}
+
\left(
\sum_{i_{1}, \cdots, i_{n}}
\Big(
Z_{i_{1}} \cdots Z_{i_{n}}
: \mathrm{e}^{ -t D_{Z} } : 
\Big).
\big( P_{n}^{i_{1}, \ldots , i_{n}} \big)
\right)
\cdot
\hspace{-25mm}
{\color{light-gray}
\underbrace{
    {\color{black}
    e_{t}^{(1)}
    }
}_{\text{{\scriptsize $\displaystyle
    \begin{array}{c}
    \rotatebox{90}{$=$} \\
    \displaystyle
    \left(
    \sum_{i_{n+1}}
    (\, Z_{i_{n+1}} :\mathrm{e}^{-tD_{Z}}: \,).(D_{i_{n+1}}^{*}.1)
    \right)
    \cdot e_{t}
    \end{array}
$}}}
} \\
&=
\Bigg(\sum_{i_{1}, \ldots , i_{n}, i_{n+1}}
(\,
    Z_{i_{1}} \cdots Z_{i_{n}} Z_{i_{n+1}}
    :\mathrm{e}^{-tD_{Z}}:
\,).\big(
    -D_{i_{n+1}}.P_{n}^{i_{1},\ldots , i_{n}}
\big)
\Bigg) \cdot e_{t} \\
&\hspace{10mm}+
\Bigg(
\sum_{i_{1}, \ldots , i_{n}, i_{n+1}}
(\,
    Z_{i_{1}} \cdots Z_{i_{n}} Z_{i_{n+1}}
\,).\Big(
    {\color{light-gray}
    \underbrace{
        {\color{black}
        [\, :\mathrm{e}^{-tD_{Z}}:( P_{n}^{i_{1}, \ldots , i_{n}} ) \,]
        \cdot
        [\, :\mathrm{e}^{-tD_{Z}}:( D_{i_{n+1}}^{*}.1 ) \,]
        }
    }_{\text{{\scriptsize $\displaystyle
        \begin{array}{c}
        \text{
        \phantom{Lemma~\ref{Lem:LEI}}
        \rotatebox{90}{$=$}
        Lemma~\ref{Lem:LEI}
        } \\
        :\mathrm{e}^{-tD_{Z}}:( P_{n}^{i_{1}, \ldots , i_{n}} \cdot ( D_{i_{n+1}}^{*}.1 ) )
        \end{array}
    $}}}
    }
\Big)
\Bigg) \cdot e_{t} \\
&=
\Bigg(
\sum_{i_{1}, \ldots , i_{n}, i_{n+1}}
(\,
    Z_{i_{1}} \cdots Z_{i_{n}} Z_{i_{n+1}}
    :\mathrm{e}^{-tD_{Z}}:
\,).\Big(
    \underbrace{
        - D_{i_{n+1}}.P_{n}^{i_{1}, \ldots , i_{n}}
        +
        P_{n}^{i_{1}, \ldots , i_{n}} \cdot ( D_{i_{n+1}}^{*}.1 )
    }_{\text{{\small $\displaystyle
        \phantom{Q}
        =
        Q
    $}}}
\Big)
\Bigg) \cdot e_{t}
\end{split}
\end{equation}
Here, we define $Q$ as above. Then, by the following computation, we see that $Q$ is equal to $P_{n+1}^{i_{1}, \ldots , i_{n+1}}$:
\begin{equation*}
\begin{split}
Q
&=
- D_{i_{n+1}}.P_{n}^{i_{1}, \ldots , i_{n}}
+
\hspace{-10mm}
{\color{light-gray}
\underbrace{
    {\color{black}
    ( D_{i_{n+1}}^{*}.1 ) \times
    }
}_{
    \phantom{ D_{i_{n+1}} + D_{i_{n+1}}^{*} }
    \stackrel{(\ref{eq:1st_chaos})}{=}
    D_{i_{n+1}} + D_{i_{n+1}}^{*}
}
}
\hspace{-10mm}
P_{n}^{i_{1}, \ldots , i_{n}} \\
&=
D_{i_{n+1}}^{*}
\hspace{-43mm}
{\color{light-gray}
\underbrace{
    {\color{black}
    P_{n}^{i_{1}, \ldots , i_{n}}
    }
}_{\text{{\scriptsize $\displaystyle
    \phantom{: (D_{i_1}+D_{i_1}^{*}) \cdots (D_{i_{n}}+D_{i_{n}}^{*}) :(1)}
    \;=\;
    : (D_{i_1}+D_{i_1}^{*}) \cdots (D_{i_{n}}+D_{i_{n}}^{*}) :(1)
$}}}
}
\hspace{-35mm}
+
\hspace{5mm}
: (D_{i_1}+D_{i_1}^{*}) \cdots (D_{i_{n}}+D_{i_{n}}^{*}) :
(
{\color{light-gray}
\overbrace{
    {\color{black}
    D_{i_{n+1}}.1
    }
}^{\text{{\scriptsize $\displaystyle
    0 = \phantom{0}
$}}}
}
) \\
&=
: (D_{i_1}+D_{i_1}^{*}) \cdots (D_{i_{n}}+D_{i_{n}}^{*}) (D_{i_{n+1}}+D_{i_{n+1}}^{*}) : (1)
=
P_{n+1}^{i_{1}, \ldots , i_{n}, i_{n+1}}
\end{split}
\end{equation*}
Therefore, (\ref{eq:inductive}) holds, and hence Theorem~\ref{Thm:MB} is proved.
\end{proof} 

\section{Reduction to the classical Ramer--Kusuoka formula}

In this section, we show that when our algebra is `represented' on the Wiener space, Theorem~\ref{Thm:main} yields the classical Ramer--Kusuoka formula. To this end, we begin with the fundamental Hermite polynomials.

The Hermite polynomials $H_{n}$, $n \in \mathbb{Z}_{\geq 0}$ is defined by $H_{0}(x) = 1$ and $H_{n}(x) = \partial^{*n} 1(x)$ for $n \in \mathbb{N}$ and $x \in \mathbb{R}$, where
\begin{equation*}
\partial^{*} f(x) := - f^{\prime}(x) + x f(x),
\quad x \in \mathbb{R}
\end{equation*}
for any differentiable function $f(x)$.
It is well known that the Hermite polynomials satisfy the following relations:
\begin{equation}
\label{eq:Hermite}
H_{n}^{\prime} (x) = n H_{n-1} (x)
\quad
\text{and}
\quad
H_{n}(x) = x H_{n-1}(x) - (n-1) H_{n-2}(x)
\end{equation}
These Hermite polynomials are characterized by the equation
\begin{equation*}
H_{n}(x)
=
(-1)^{n}
\mathrm{e}^{x^2/2}
\frac{ \mathrm{d}^{n} }{ \mathrm{d}x^{n} }
\mathrm{e}^{ -x^{2}/2 }.
\end{equation*}

\subsection{Representation on Wiener space}

Let $W = \{ w \in C([0,1] \to \mathbb{R}): w(0) = 0 \}$, and consider the triple $( W, \mathcal{B}(W), \mu )$, where $\mathcal{B}(W)$ denotes the Borel $\sigma$-algebra on $W$ and $\mu$ is the Wiener measure; this is the classical Wiener space. The $L_{2}$-inner product on the classical Wiener space is given by
$
\langle
    F, G
\rangle_{\mu}
=
\int_{W} F(w) G(w) \,\mu (\mathrm{d}w)
$.

Let $H$ be the Cameron--Martin subspace of $W$, that is, the set of elements of $W$ that are absolutely continuous and whose derivatives are square-integrable. Then, $H$, equipped with the inner product
$
\langle h, k \rangle_{H}
=
\int_{0}^{1} \dot{h}(s) \dot{k}(s) \,\mathrm{d}s
$,
$g,h \in H$
forms a Hilbert space. Fix once and for all a complete orthonormal basis $\{ h_{i} \}_{i=1}^{\infty}$ of $H$.

We define
$\Gamma^{\mathrm{alg}}( \int \dot{H} \,\mathrm{d}w )$
as the space of polynomial functionals on the Wiener space; that is, the set of all random variables of the form
\begin{equation*}
F(w)
=
f(
\int_{0}^{1} \dot{k}_{1}(s) \,\mathrm{d}w(s),
\ldots ,
\int_{0}^{1} \dot{k}_{n}(s) \,\mathrm{d}w(s)
),
\end{equation*}
where $n \in \mathbb{N}$, $f(x_{1}, \ldots , x_{n})$ is a polynomial in $n$ variables, and $k_{1}, \ldots , k_{n} \in H$. Then
$\Gamma^{\mathrm{alg}}( \int \dot{H} \,\mathrm{d}w )$
is a commutative algebra over $k = \mathbb{R}$.

For $h \in H$, let $D_{h}$ denote the derivative in the direction of $h$, acting on $\Gamma^{\mathrm{alg}}( \int \dot{H} \,\mathrm{d}w )$:
\begin{equation}
\label{eq:def_D}
D_{h} (F) (w)
=
\left.
\frac{\mathrm{d}}{\mathrm{d}\varepsilon}
\right\vert_{\varepsilon = 0}
F( w + \varepsilon \cdot h ),
\quad
w \in W
\end{equation}
and define
\begin{equation*}
D_{h}^{*} (F) (w)
=
- ( D_{h} F ) (w)
+
\left( \int_{0}^{1} \dot{h}(s) \mathrm{d}w(s) \right)
F(w), 
\quad
w \in W
\end{equation*}
Hereafter we sometimes write $D_{h}.F$ and $D_{h}^{*}.F$ in place of $D_{h}(F)$ and $D_{h}^{*}(F)$, respectively.

The Heisenberg algebra, a subalgebra in $\mathcal{D}(R)$, generated by $\{ D_{i}, D_{i}^{*} \}_{i}$ can be represented as
\begin{equation*}
D_{i} = D_{h_{i}},
\quad
D_{i}^{*} = D_{h_{i}}^{*},
\quad
i \in \mathbb{N}.
\end{equation*}
Then it is easily verified that they satisfy the CCR:
\begin{equation}
\label{eq:CCR}
[ D_{i}, D_{j} ] = [ D_{i}^{*}, D_{j}^{*} ] = 0,
\quad
\text{and}
\quad
[ D_{i}, D_{j}^{*}]
=
\delta_{i,j}. 
\end{equation}
Therefore, 
$\Gamma^{\mathrm{alg}}( \int \dot{H} \,\mathrm{d}w )$
is the algebraic (Bosonic-) Fock space generated by $\{ D_{i}, D_{i}^{*} \}_{i}$. By taking the completion, one obtains that $L_{2}( W, \mathcal{B}(W), \mu )$ is a (Bosonic) Fock space.

\begin{Prop} 
\label{Prop:basics}
Let $h \in H$ and $F,G \in \Gamma^{\mathrm{alg}}(\int \dot{H} \,\mathrm{d}w)$. Then the following hold:
\begin{itemize}
\item[(i)]
$\Vert h \Vert_{H} = 1$
$\Rightarrow$
$\displaystyle
D_{h}^{*n}.1(w) = H_{n}( \int_{0}^{1}\dot{h}(s) \,\mathrm{d}w(s) )
$.

\medskip
\item[(ii)]
$
D_{h}^{*}(FG) = ( D_{h}^{*}.F) \cdot G - F \cdot D_{h}.G
$.

\medskip
\item[(iii)]
$\displaystyle
\langle D_{h} F, G \rangle_{\mu}
=
\langle F , D_{h}^{*}G \rangle_{\mu}
$.
\end{itemize}
\end{Prop}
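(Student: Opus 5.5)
We treat the three items in the order (ii), (iii), (i), since (ii) is purely algebraic and is used in the other two. Throughout, we write $\widetilde{h}(w) = \int_{0}^{1} \dot{h}(s)\,\mathrm{d}w(s)$. We use the elementary fact that $D_{h}\widetilde{k} = \langle h, k\rangle_{H}$, which follows from (\ref{eq:def_D}) because $\widetilde{k}(w+\varepsilon h) = \widetilde{k}(w) + \varepsilon \langle k,h\rangle_{H}$ for $F$ polynomial in Wiener integrals.

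For (ii), $D_{h}$ is a derivation on $\Gamma^{\mathrm{alg}}(\int \dot{H}\,\mathrm{d}w)$ (by the chain rule for polynomials in $\widetilde{k}_{1},\dots,\widetilde{k}_{n}$). Hence
\[
D_{h}^{*}(FG) = -(D_{h}F)G - F(D_{h}G) + \widetilde{h}FG = (D_{h}^{*}F)G - F\,D_{h}G .
\]

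For (iii), we reduce to the finite-dimensional Gaussian integration by parts. Choose an orthonormal system $e_{1}=h/\Vert h\Vert_{H}, e_{2},\dots,e_{m}$ in $H$ spanning $h$ and all the $k$'s appearing in $F$ and $G$ (Gram--Schmidt; the case $h=0$ is trivial). Then $F = f(\widetilde{e}_{1},\dots,\widetilde{e}_{m})$ and $G = g(\widetilde{e}_{1},\dots,\widetilde{e}_{m})$ with polynomials $f,g$. The vector $(\widetilde{e}_{1},\dots,\widetilde{e}_{m})$ is standard Gaussian $\mu_{m}$ under $\mu$, by the isometry of the Wiener integral. Moreover $D_{h} = \Vert h\Vert_{H}\,\partial_{x_{1}}$ and $\widetilde{h} = \Vert h\Vert_{H}\, x_{1}$ in these coordinates. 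The claim therefore becomes
\[
\int \partial_{1}f\cdot g\,\mathrm{d}\mu_{m} = \int f\,(-\partial_{1}g + x_{1}g)\,\mathrm{d}\mu_{m},
\]
which is one-dimensional integration by parts against $\mathrm{e}^{-x_{1}^{2}/2}$; polynomial growth kills the boundary terms. Equivalently, we can integrate (ii) using $E[D_{h}^{*}(FG)] = E[\widetilde{h}FG - D_{h}(FG)] = 0$. The latter identity is the only analytic input, and it is exactly the Gaussian integration by parts just described.

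For (i), we argue by induction on $n$, showing $D_{h}^{*n}.1 = H_{n}(\widetilde{h})$. The case $n=0$ is trivial. For the inductive step, if $D_{h}^{*n}.1 = H_{n}(\widetilde{h})$, then
\[
D_{h}^{*}\bigl(H_{n}(\widetilde{h})\bigr) = -H_{n}'(\widetilde{h})\,D_{h}\widetilde{h} + \widetilde{h}H_{n}(\widetilde{h}) = -H_{n}'(\widetilde{h}) + \widetilde{h}H_{n}(\widetilde{h}),
\]
using $D_{h}\widetilde{h} = \Vert h\Vert_{H}^{2} = 1$. This equals $(\partial^{*}H_{n})(\widetilde{h}) = H_{n+1}(\widetilde{h})$ by the definition of $H_{n+1}$. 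Here the normalization $\Vert h\Vert_{H}=1$ is precisely what makes the chain-rule factor equal to $1$.

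The main (mild) obstacle is the justification in (iii) that the finite-dimensional reduction is legitimate: that the Wiener integrals of an orthonormal family are i.i.d.\ standard Gaussian, and that $D_{h}$ acts as a coordinate derivative in these variables. Both follow from the isometry $\Vert\widetilde{k}\Vert_{L_{2}(\mu)} = \Vert k\Vert_{H}$ and the linearity of $\widetilde{k}(w+\varepsilon h)$ in $\varepsilon$.
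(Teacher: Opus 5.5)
Your proof is correct and follows essentially the paper's argument: (ii) from the Leibniz rule for $D_h$, (iii) by reducing (equivalently, via $\int_W D_h^*(FG)\,\mu(\mathrm{d}w)=0$) to finite-dimensional Gaussian integration by parts, and (i) by induction on $n$. The only differences are cosmetic. You use the defining recursion $H_{n+1}=\partial^* H_n$ directly instead of the recurrences (\ref{eq:Hermite}). You also orthonormalize with $e_1=h/\Vert h\Vert_H$ rather than merely taking the $k_i$ linearly independent, which conveniently makes $D_h$ a coordinate derivative.
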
 

In particular, note that the following additional property holds, which cannot be expressed solely in terms of the commutation relations of the algebra:
For
$
F,G
\in
\Gamma^{\mathrm{alg}}( \textstyle{\int \dot{H} \,\mathrm{d}w} )
$
and
$i \in \mathbb{N}$,
\begin{equation*}
\langle D_{i} F, G \rangle_{\mu}
=
\langle F , D_{i}^{*}G \rangle_{\mu},
\quad
\int_{W} D_{h}^{*} F (w) \,\mu (\mathrm{d}w) = 0.
\end{equation*}

\begin{proof}[Proof of Proposition~\ref{Prop:basics}]
(i) follows by mathematical induction, using the definition of $D_{h}^{*}$ and property (\ref{eq:Hermite}) of the Hermite polynomials.
(ii) is immediate from the definition of $D_{h}^{*}$ together with the Leibniz rule satisfied by $D_{h}$.
(iii) It suffices to prove that
$\int_{W} ( D_{h}^{*}.F )(w) \,\mu ( \mathrm{d}w ) = 0$
for $F$ of the form
$
F(w) = f( \int_{0}^{1} \dot{k}_{1}(s) \,\mathrm{d}w(s), \ldots , \int_{0}^{1} \dot{k}_{n}(s) \,\mathrm{d}w(s))
$,
where $k_{1}, \ldots , k_{n} \in H$ are linearly independent. This can be shown by reducing the problem to a multiple integral on Euclidean space, using the fact that
$
(\int_{0}^{1} \dot{k}_{1}(s) \,\mathrm{d}w(s), \ldots , \int_{0}^{1} \dot{k}_{n}(s) \,\mathrm{d}w(s))
$
follows an $n$-dimensional Gaussian distribution.
\end{proof} 

Let us first explain how the more fundamental Cameron--Martin formula is recovered, rather than the Ramer--Kusuoka formula.

\medskip

From now on, fix arbitrarily $h \in H$, which can be expressed as a linear combination of $\{ h \}_{i}$, and introduce an additional structure $Z = \{ Z_{i} \}_{i\in\mathbb{N}}$ in the Heisenberg algebra by setting $Z_{i} = \langle h, h_{i} \rangle_{H}$. This $Z$ collects the Fourier coefficients of $h$, and since it essentially represents $h$ itself, we will henceforth identify $Z$ with $h$. In this case, since
$
[ \langle h, h_{i} \rangle_{H}, D_{j} ]
= [ \langle h, h_{i} \rangle_{H}, D_{j}^{*} ]
= 0
$,
(where $\langle h, h_{i} \rangle_{H}$ is treated as a scalar multiplication operator), the normal order product defined by $\{ D_{i}, D_{i}^{*}, \langle h, h_{i} \rangle_{H} \}_{i}$
(see Section~\ref{Sec:det_order})
satisfies
$
\Phi_{h}
= ( D_{i}^{*} \langle h, h_{j} \rangle_{H} )_{i,j}
= ( \langle h, h_{j} \rangle_{H} D_{i}^{*} )_{i,j}
$,
$
\Psi_{h}
= ( [\, D_{i}^{*}, \langle h, h_{j} \rangle_{H} \,] )_{i,j}
= O
$.
Hence,
\begin{equation*}
\begin{split}
\mathcal{T}\!\exp
\left(
    \sum_{n=0}^{\infty}
    [\, \mathrm{Tr}( \Phi_{h} ( - \Psi_{h})^{n} ) \,]
    \,
    \frac{ t^{n+1} }{ n+1 }
\right)
=
\mathrm{e}^{ t \,\mathrm{Tr}( \Phi_{h} ) }
=
\mathrm{e}^{
    t
    \sum_{i}
    \langle h, h_{i} \rangle_{H} D_{i}^{*}
}
=
\mathrm{e}^{ t D_{h}^{*} }
\end{split}
\end{equation*}
Therefore, by Theorem~\ref{Thm:main},
\begin{equation}
\label{eq:Alg_CM}
\mathrm{e}^{ t D_{h}^{*} }
=
\mathcal{T}\!\exp
\left(
    \sum_{n=0}^{\infty}
    [\, \mathrm{Tr}( \Phi_{h} ( - \Psi_{h})^{n} ) \,]
    \,
    \frac{ t^{n+1} }{ n+1 }
\right)
\stackrel{\text{Theorem~\ref{Thm:main}}}{=}
{\color{light-gray}
\underbrace{
    {\color{black}
    \det ( 1 + t \Psi_{h} )
    }
}_{\text{{\scriptsize $\displaystyle
    \phantom{1}
    =
    1
$}}}
}
\cdot \mathcal{E}_{h}(t)
\cdot T_{-h}(t)
\end{equation}
Let us now compute explicitly the operators appearing on the right-hand side.
First, note that under our representation,
$
D_{h} = \sum_{i} \langle h, h_{i} \rangle_{H} D_{i}
$
(which is consistent with (\ref{eq:def_D})),
and
$
T_{h}(t) \;=\; :\mathrm{e}^{t D_{h}}: \;=\; \mathrm{e}^{t D_{h}}
$,
$
:\mathrm{e}^{t D_{h}^{*}}: \;=\; \mathrm{e}^{t D_{h}^{*}}
$.
Moreover,
\begin{equation*}
\mathrm{e}^{ t ( D_{h} + D_{h}^{*} ) }
\;=\;
\exp \left(
    t
    \sum_{i}
    \langle h, h_{i} \rangle_{H} \int_{0}^{1} \dot{h}_{i}(s) \,\mathrm{d}w(s)
\right)
=
\exp \left(
    t \int_{0}^{1} \dot{h}(s) \,\mathrm{d}w(s)
\right)
\end{equation*}
Furthermore, since the commutator satisfies
$
[ D_{h}, D_{h}^{*} ] = \Vert h \Vert_{2}^{2}
$
(a scalar),
the Baker--Campbell--Hausdorff formula yields
$
\mathrm{e}^{t D_{h}^{*}} \mathrm{e}^{t D_{h} }
=
\mathrm{e}^{ t ( D_{h}^{*} + D_{h} ) + \frac{t^{2}}{2} [ D_{h}^{*}, D_{h} ]  }
$.
Using also that
$D_{h}+D_{h}^{*}=\int_{0}^{1} \dot{h}(s) \,\mathrm{d}w(s)$
is a scalar operator, we obtain
\begin{equation*}
\begin{split}
\mathcal{E}_{h}(t)
\;=\;
:\mathrm{e}^{ t ( D_{h} + D_{h}^{*} ) }:
&\;=\;
\hspace{-23mm}
{\color{light-gray}
\underbrace{
    {\color{black}
    \mathrm{e}^{ t D_{h}^{*} }
    \;
    \mathrm{e}^{ t D_{h} }
    }
}_{\text{{\scriptsize $\displaystyle
    \begin{array}{c}
    \displaystyle
    \phantom{
        \mathrm{e}^{ t ( D_{h}^{*} + D_{h} ) + \frac{t^{2}}{2} [ D_{h}^{*}, D_{h} ]  }
    }
    =
    \mathrm{e}^{ t ( D_{h}^{*} + D_{h} ) + \frac{t^{2}}{2} [ D_{h}^{*}, D_{h} ]  } \\
    \displaystyle
    \phantom{
        \mathrm{e}^{ t ( D_{h} + D_{h}^{*} ) - \frac{t^{2}}{2} \Vert h \Vert_{H}^{2}  }
    }
    =
    \mathrm{e}^{ t ( D_{h} + D_{h}^{*} )
    -
    \frac{t^{2}}{2} \Vert h \Vert_{H}^{2}  } \\
    \end{array}
$}}}
} \\
&\;=\;
\mathrm{e}^{ t ( D_{h} + D_{h}^{*} ) }
\;
\mathrm{e}^{ -\frac{t^{2}}{2} \Vert h \Vert_{H}^{2} }
=
\exp \left(
    t \int_{0}^{1} \dot{h}(s) \,\mathrm{d}w(s)
    -
    \frac{t^{2}}{2}
    \int_{0}^{1} \dot{h}(s)^{2} \,\mathrm{d}s
\right)
\end{split}
\end{equation*}

Finally, for $T_{-h}(t)$, we can compute it as follows.

\begin{Prop} 
\label{Prop:pr-trans}
For any $h \in H$ and $F \in \Gamma^{\mathrm{alg}} ( \int \dot{h} \,\mathrm{d}w )$, we have
\begin{equation*}
( T_{h}(t).F )(w) = F( w + t \cdot h ).
\end{equation*}
\end{Prop}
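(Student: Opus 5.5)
Since the $Z_i=\langle h,h_i\rangle_H$ are scalars commuting with every $D_j$, the normal ordering in $T_h(t)$ is vacuous. Hence
\[
T_h(t)=\sum_{n\ge0}\frac{t^n}{n!}D_h^n=\mathrm{e}^{tD_h},
\]
with $D_h=\sum_i\langle h,h_i\rangle_H D_i$. (The sum is finite, because $h$ is assumed to be a finite combination of the $h_i$.) So the claim reduces to showing that, for a polynomial functional $F$, the formal series $\sum_n \frac{t^n}{n!}(D_h^nF)(w)$ equals $F(w+th)$. Here we read the identity in $R[\![t]\!]$, or as an honest equality for real $t$: both sides turn out to be polynomials in $t$.

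I would proceed concretely. Write
\[
F(w)=f\bigl(\widetilde{k}_1(w),\dots,\widetilde{k}_m(w)\bigr),\qquad \widetilde{k}_j(w)=\int_0^1\dot k_j\,\mathrm{d}w,
\]
with $f$ a polynomial. The key observation is that the Wiener integral is affine along Cameron--Martin directions:
\[
\widetilde{k}_j(w+sh)=\widetilde{k}_j(w)+s\langle k_j,h\rangle_H .
\]
For smooth $w$ this is immediate. In general it follows from the stochastic-extension construction: approximate $\dot k_j$ by step functions, for which the integral is a finite linear combination of increments and the shift simply adds $s\int \dot k_j\dot h\,\mathrm{d}s$; then pass to the limit. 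Since $\dot h$ is deterministic, the shifted quantity is a deterministic translate. Consequently
\[
\phi(s):=F(w+sh)=f\bigl(\widetilde{k}_1(w)+s\langle k_1,h\rangle_H,\dots\bigr)
\]
is a polynomial in $s$ for each fixed $w$.

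Next, I would show by induction on $n$ that $(D_h^nF)(w+sh)=\phi^{(n)}(s)$.
\begin{itemize}
\item The case $n=0$ is trivial.
\item For the inductive step, note that each $D_h^nF$ is again a polynomial functional of the same $\widetilde{k}_j$, namely $\bigl(\sum_j\langle k_j,h\rangle_H\partial_j\bigr)^nf$ evaluated at $(\widetilde{k}_j)$. By the definition (\ref{eq:def_D}) and the affine shift property,
\[
D_h(G)(w+sh)=\frac{\mathrm{d}}{\mathrm{d}\varepsilon}\Big|_{0}G(w+(s+\varepsilon)h),
\]
which gives the derivative of the previous stage.
\end{itemize}
Then Taylor's formula for the polynomial $\phi$, which is exact with finitely many terms, gives
\[
F(w+th)=\sum_n\frac{t^n}{n!}\phi^{(n)}(0)=\sum_n\frac{t^n}{n!}(D_h^nF)(w)=(T_h(t).F)(w).
\]

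The main obstacle is only the justification of the shift identity for the Wiener integral on non-smooth paths $w$. It is handled by the density/approximation argument above, or alternatively by restricting to $\dot k_j$ of bounded variation and integrating by parts, $\int\dot k\,\mathrm{d}w=\dot k(1)w(1)-\int w\,\mathrm{d}\dot k$, which is manifestly linear and continuous in $w$. Everything else is finite-dimensional polynomial calculus.
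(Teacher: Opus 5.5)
Your proof is correct, but it is organized differently from the paper's. The paper uses Lemma~\ref{Lem:LEI} (multiplicativity of $T_{h}(t)$, derived abstractly from $D_{i}1=0$ and the Leibniz rule) to reduce to functionals $f(\widetilde{k})$ of a single Wiener integral. It then sums the one-variable series using $D_{h}^{n}f(\widetilde{k})=\langle h,k\rangle_{H}^{n}f^{(n)}(\widetilde{k})$.

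You never use multiplicativity. You treat a multivariate polynomial functional directly by restricting to the line $s\mapsto w+sh$, show by induction that $D_{h}$ acts there as $\mathrm{d}/\mathrm{d}s$, and conclude by the exact Taylor expansion of the polynomial $\phi$.

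\textbf{What each route buys.} The paper's route is shorter once the lemma is available, and it stresses the structural point that $T_{h}(t)$ is an algebra homomorphism (a pull-back). Yours is self-contained and makes explicit the affine shift property $\widetilde{k}(w+sh)=\widetilde{k}(w)+s\langle k,h\rangle_{H}$. The paper uses that property tacitly in two places: when computing $D_{h}\widetilde{k}$ from (\ref{eq:def_D}), and when identifying $f(\widetilde{k}(w)+t\langle k,h\rangle_{H})$ with $F(w+th)$. The paper's last display in fact drops the factor $t$, which you get right. Your argument also covers the random case $T_{-Z}(t)$ that the paper treats ``similarly'': all $Z_{i}$ stand to the left of the derivatives, so at a fixed $w$ you may freeze $h=-Z(w)$.

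\textbf{One point to tighten.} Fix the version of $\widetilde{k}_{j}$: define it as the pointwise limit along an a.s.\ convergent subsequence of step-function approximations. Its convergence set is invariant under translations by elements of $H$. Hence the shift identity holds on that set for all $s$ simultaneously, which your differentiation in $s$ requires.
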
 
\begin{proof} 

By Lemma~\ref{Lem:LEI}, for $F,G \in \Gamma^{\mathrm{alg}}(\int \dot{H} \mathrm{d}w)$, we have
$
T_{h}(t).(FG)
=
( T_{h}(t).F )
\cdot
( T_{h}(t).G )
$.
Therefore, it suffices to prove the claim in the case where
$
F (w) = f( \int_{0}^{1}\dot{k}(s) \,\mathrm{d}w(s) )
$
for some polynomial function $f(x)$ and $k \in H$. This can be verified as follows:
\begin{equation*}
\begin{split}
(T_{h}(t).F)(w)
&=
\sum_{n=0}^{\infty}
\frac{t^{n}}{n!}
D_{h}^{n}.\left( f( \int _{0}^{1} \dot{k}(s) \,\mathrm{d}w(s) ) \right) \\
&=
\sum_{n=0}^{\infty}
\frac{t^{n}}{n!} f^{(n)} ( \int_{0}^{1} \dot{k}(s) \,\mathrm{d}w(s) )
\cdot
\big(
    \hspace{-57mm}
    {\color{light-gray}
    \underbrace{
        {\color{black}
        \langle h, k \rangle_{H}
        }
    }_{\text{{\scriptsize $\displaystyle
        \phantom{
            \left( \int_{0}^{1} \dot{k}(s) \,\mathrm{d}w(s) + \langle h, k \rangle_{H} \right) - \int_{0}^{1} \dot{k}(s) \,\mathrm{d}w(s)
        }
        =
        \left(
        \int_{0}^{1} \dot{k}(s) \,\mathrm{d}w(s)
        +
        \langle h, k \rangle_{H}
        \right)
        -
        \int_{0}^{1} \dot{k}(s) \,\mathrm{d}w(s)
    $}}}
    }
    \hspace{-57mm}
\big)^{n}
\hspace{57mm} \\
&=
f( \int _{0}^{1} \dot{k}(s) \,\mathrm{d}w(s) + \langle k, h \rangle_{H} )
=
F( w + h ).
\end{split}
\end{equation*}
\end{proof} 

Applying both sides of (\ref{eq:Alg_CM}) to
$
F \in \Gamma^{\mathrm{alg}}( \int \dot{H} \,\mathrm{d}w )
$,
we obtain
\begin{equation*}
\begin{split}
(\mathrm{e}^{ t D_{h}^{*} }.F) (w)
&=
(\mathcal{E}_{h}(t) T_{-h}(t).F)(w) \\
&=
\exp \left(
    t \int_{0}^{1} \dot{h}(s) \,\mathrm{d}w(s)
    -
    \frac{t^{2}}{2}
    \int_{0}^{1} \dot{h}(s)^{2} \,\mathrm{d}s
\right)
\cdot
F( w - t \cdot h )
\end{split}
\end{equation*}
Taking expectations on both sides, we recover the Cameron--Martin formula.

\medskip

Next, we explain how the Ramer--Kusuoka formula is recovered.

\medskip

Fix
$
Z = \{ Z_{i} \}_{i} \subset \Gamma^{\mathrm{alg}} ( \int \dot{H} \,\mathrm{d}w)
$
such that $Z_{i}=0$ for all but finitely many $i$, and define a measurable process
$Z(s,w) = Z(s)(w)$
by
\begin{equation*}
Z(s,w) = \sum_{i} Z_{i}(w) h_{i}(s),
\quad
0 \leq s \leq 1,
\quad
w \in W.
\end{equation*}
With respect to the fixed orthonormal basis $\{ h_{i} \}_{i}$, the data $Z$ and $\{ Z(s) \}_{s\in [0,1]}$ are equivalent, and hence we shall identify them in what follows.

The algebra $\mathcal{D}(R)$ is represented as a subalgebra of
$
\mathrm{End}_{\mathbb{R}}( \Gamma^{\mathrm{alg}} ( \int \dot{H} \,\mathrm{d}w ) )
$
generated by $\{ D_{i}, D_{i}^{*}, Z_{i} \}_{i}$, and we consider the corresponding normal ordered product.

As a function of $w$, $Z$ defines a mapping $W \ni w \mapsto (Z(s,w))_{0 \leq s \leq 1} \in H$, and by differentiating this mapping, we can define, for almost every $w \in W$, a trace-class operator 
$
DZ(w) : H \to H
$
by
\begin{equation*}
\langle DZ(w)(h), k \rangle_{H}
=
D_{k}\langle Z(w), h \rangle_{H},
\quad
h,k \in H.
\end{equation*}
Note that the matrix representation of this linear transformation with respect to $\{ h_{i} \}_{i}$ is given by
$\Psi_{Z} = ( [D_{i}^{*},Z_{j}] )_{i,j}$.

Now, our Theorem~\ref{Thm:main} asserts the following:
\begin{equation}
\label{eq:Alg_RK}
\mathcal{T}\!\exp
\left(
    \sum_{n=0}^{\infty}
    [\, \mathrm{Tr}( \Phi_{Z} ( - \Psi_{Z})^{n} ) \,]
    \,
    \frac{ t^{n+1} }{ n+1 }
\right)
\stackrel{\text{Theorem~\ref{Thm:main}}}{=}
\det ( 1 + t \Psi_{Z} )
\cdot \mathcal{E}_{Z}(t)
\cdot T_{-Z}(t)
\end{equation}
The action of $T_{-Z}(t)$ on the right-hand side is, similarly to Proposition~\ref{Prop:pr-trans}, given by
$(T_{-Z}(t).F)(w) = F( w - Z(w) )$.
We now proceed to determine explicitly the remaining multiplicative operator $\mathcal{E}_{Z}(t)$.

By Theorem~\ref{Thm:MB}, we have
\begin{equation*}
\begin{split}
\mathcal{E}_{Z}(t).1(w)
&=
\exp \left( \int_{0}^{t} :\mathrm{e}^{ -t^{\prime} D_{Z} } D_{Z}^{*} : (1) \,\mathrm{d}t^{\prime} \right) (w)
\\
&=
\exp \left(
\int_{0}^{t}
\sum_{i}
\Big( Z_{i} : \mathrm{e}^{-t^{\prime} D_{Z} } : D_{i}^{*} (1) \Big)
\, \mathrm{d}t^{\prime}
\right) (w) \\
&=
\exp \Bigg(
    \int_{0}^{t}
    \sum_{i} Z_{i}
    \sum_{n=0}^{\infty}
    \frac{ (-t^{\prime})^{n} }{ n! }
    \sum_{j_{1}, \ldots, j_{n}}
        Z_{j_{1}} \cdots Z_{j_{n}}
    {\color{light-gray}
    \underbrace{
        {\color{black}
        D_{j_{1}} \cdots D_{j_{n}}
        D_{i}^{*} (1)
        }
    }_{\text{{\scriptsize vanishes for $n\geq 2$}}}
    }
    \,\mathrm{d}t^{\prime}
\Bigg) (w) \\
&=
\exp \Bigg(
    \int_{0}^{t}
    \sum_{i} Z_{i}(w)
    \Bigg(
    D_{i}^{*} (1)
    -
    t^{\prime}
    \;
    {\color{light-gray}
    \underbrace{
        {\color{black}
        \sum_{j}
        Z_{j}(w)
        {\color{light-gray}
        \underbrace{
            {\color{black}
            D_{j}
            D_{i}^{*} (1)
            }
        }_{\text{{\scriptsize $\displaystyle
            \phantom{\delta_{i,j}}
            =
            \delta_{i,j}
        $}}}
        }
        }
    }_{\text{{\scriptsize $\displaystyle
        \phantom{Z_{i}(w)}
        =
        Z_{i}(w)
    $}}}
    }
    \Bigg)
    \,\mathrm{d}t^{\prime}
\Bigg) \\
&=
\exp \Bigg(
    \int_{0}^{t}
    \sum_{i} Z_{i}(w) D_{i}^{*}(1)
    -
    t^{\prime} \sum_{i} Z_{i}(w)^{2}
    \,\mathrm{d}t^{\prime}
\Bigg) \\
&=
\exp \Bigg(
    t
    \sum_{i} Z_{i}(w)
    \hspace{-6mm}
    {\color{light-gray}
    \underbrace{
        {\color{black}
        D_{i}^{*}(1)
        }
    }_{\text{{\scriptsize $\displaystyle
        \begin{array}{c}
        \rotatebox{90}{$=$} \\
        \displaystyle
        \int_{0}^{1} \dot{h}_{i}(s) \,\mathrm{d}w(s)
        \end{array}
    $}}}
    }
    \hspace{-6mm}
    -
    \frac{t^{2}}{2}
    {\color{light-gray}
    \underbrace{
        {\color{black}
        \sum_{i} Z_{i}(w)^{2}
        }
    }_{\text{{\scriptsize $\displaystyle
        \begin{array}{c}
        \rotatebox{90}{$=$} \\
        \displaystyle
        \int_{0}^{1} \dot{Z}(s)^{2} \,\mathrm{d}s
        \end{array}
    $}}}
    }
\Bigg) \\
&=
\exp \Bigg(
    t
    \sum_{i} Z_{i}(w)
    \int_{0}^{1} \dot{h}_{i}(s) \,\mathrm{d}w(s)
    -
    \frac{t^{2}}{2}
    \int_{0}^{1} \dot{Z}(s)^{2} \,\mathrm{d}s
\Bigg)
\end{split}
\end{equation*}
Next, using the integration-by-parts formula with the Malliavin derivative $D_{s}$, $0 \leq s \leq 1$
(e.g. see \cite[p.~36]{NOP}),
we have
\begin{equation*}
Z_{i}(w)
\int_{0}^{1} \dot{h}_{i}(s) \,\mathrm{d}w(s)
=
\int_{0}^{1} Z_{i}(w) \dot{h}_{i}(s) \,\delta w(s)
+
\int_{0}^{1} \dot{h}_{i}(s) D_{s}Z_{i}(w) \mathrm{d}s.
\end{equation*}
Since
$
\int_{0}^{1} \dot{h}_{i}(s) D_{s}Z_{i}(w) \,\mathrm{d}s
=
\langle D_{i}Z, h_{i} \rangle_{H} = D_{i}Z_{i}
$,
we obtain
\begin{equation*}
\sum_{i}
Z_{i}(w)
\int_{0}^{1} \dot{h}_{i}(s) \,\mathrm{d}w(s)
=
\int_{0}^{1} \dot{Z}(s) \,\delta w(s)
+
\mathrm{tr}( DZ(w) )
\end{equation*}
Hence,
\begin{equation*}
\begin{split}
\mathcal{E}_{Z}(t).1(w)
=
\mathrm{e}^{ t \cdot \mathrm{tr}( DZ(w) ) }
\cdot
\exp \Bigg(
    t
    \int_{0}^{1} \dot{Z}(s) \,\delta w(s)
    -
    \frac{t^{2}}{2}
    \int_{0}^{1} \dot{Z}(s)^{2} \,\mathrm{d}s
\Bigg)
\end{split}
\end{equation*}

Therefore, applying both sides of (\ref{eq:Alg_RK}) to
$
F \in \Gamma^{\mathrm{alg}}( \int \dot{H} \,\mathrm{d}s )
$,
we obtain
\begin{equation*}
\begin{split}
&
\mathcal{T}\!\exp
\left(
    \sum_{n=0}^{\infty}
    [\, \mathrm{Tr}( \Phi_{Z} ( - \Psi_{Z})^{n} ) \,]
    \,
    \frac{ t^{n+1} }{ n+1 }
\right).F (w) \\
&=
\det ( 1 - t DZ(w) )
\cdot
\mathrm{e}^{ t \cdot \mathrm{tr}( DZ(w) ) }
\cdot
\exp \Bigg(
    t
    \int_{0}^{1} \dot{Z}(s) \,\delta w(s)
    -
    \frac{t^{2}}{2}
    \int_{0}^{1} \dot{Z}(s)^{2} \,\mathrm{d}s
\Bigg) \\
&\hspace{20mm} \times
F( w - t \cdot Z(w) )
\end{split}
\end{equation*}
Taking expectations of both sides, we recover the Ramer--Kusuoka formula.

\begin{Rm} 
In the present framework, we have used a setting in which the normal ordered product and time ordering are independent. However, \cite{AAU} develops an interesting framework in which these structures are intertwined.
\end{Rm} 


\end{document}